\documentclass[leqno,12pt]{article}
%
\usepackage[usenames, dvipsnames]{xcolor}
\usepackage{tikz}

\usepackage{amssymb,hyperref,amsthm}
\usepackage{euscript}
\usepackage{flafter}
\usepackage{pstricks}
\usepackage[latin1]{inputenc}
\usepackage{amsmath}
\usepackage{amsfonts}
\usepackage{pstricks-add}
\usepackage{dsfont}
\usepackage{bm}
\usepackage{variations}

\hypersetup{
    linktoc=page,
   linkcolor=red,          
  citecolor=blue,        
    filecolor=blue,      
   urlcolor=cyan,
    colorlinks=true           
}

\usepackage[refpage]{nomencl}

\usepackage{makeidx}
\makeindex



\usepackage{mathrsfs} 

\setlength{\oddsidemargin}{-0.2in} \setlength{\textwidth}{6.8in}
\setlength{\topmargin}{-0.0in} \setlength{\textheight}{8.6in}
\evensidemargin\oddsidemargin

\newcommand{\eqnsection}{
\renewcommand{\theequation}{\thesection.\arabic{equation}}
   \makeatletter
   \csname  @addtoreset\endcsname{equation}{section}
   \makeatother}
\eqnsection


\def\r{{\mathbb R}}


\def\P{{\bf P}}
\def\E{{\bf E}}







\def\1{{\mathds{1}}}

 











\begin{document}

\baselineskip=18pt
\setcounter{page}{1}

\newtheorem{theorem}{Theorem}[section]

\newtheorem{Definition}[theorem]{Definition}
\newtheorem{lemma}[theorem]{Lemma}
\newtheorem{fact}[theorem]{Fact}
\newtheorem{proposition}[theorem]{Proposition}
\newtheorem{remark}[theorem]{Remark}
\newtheorem{conjecture}[theorem]{Conjecture}
\newtheorem{corollary}[theorem]{Corollary}
\newtheorem*{assa}{Assumption (A)}

\vglue50pt

\centerline{\large\bf Increasing paths on $N$-ary trees}

\bigskip
\bigskip
\centerline{Xinxin Chen}

\medskip

\centerline{\it LPMA, Universit\'e Paris VI}

\bigskip
\bigskip
\bigskip

\bigskip
{\leftskip=2truecm \rightskip=2truecm \baselineskip=15pt \small

\noindent{\slshape\bfseries Summary.} Consider a rooted $N$-ary tree. To every vertex of this tree, we attach an i.i.d. continuous random variable. A vertex is called accessible if along its ancestral line, the attached random variables are increasing. We keep accessible vertices and kill all the others. For any positive constant $\alpha$, we describe the asymptotic behaviors of the population at the $\alpha N$-th generation as $N$ goes to infinity. We also study the criticality of the survival probability at the $(eN-\frac{3}{2}\log N)$-th generation in this paper.

\bigskip

\noindent{\slshape\bfseries Keywords.} Increasing path; House of Cards.
\bigskip

} 

\bigskip
\bigskip

\section{Introduction}
   \label{ss:InPintro}
   
\subsection{The model}
We consider an $N$-ary tree $T^{(N)}$, which is rooted at $\varnothing$, so that each vertex in $T^{(N)}$ has exactly $N$ children. To every vertex $\sigma\in T^{(N)}$, we assign a continuous random variable, denoted by $x_{\sigma}$. All these variables $x_\sigma$, $\sigma\in T^{(N)}$ are i.i.d. Let $|\sigma|$ denote the generation of $\sigma$, and $\sigma_i$ (for $0\le i\le |\sigma|$) denote its ancestor at generation $i$. The ancestral line of $\sigma$ is denoted by
\begin{equation*}
[\![ \varnothing, \, \sigma]\!]:=\{\sigma_0:=\varnothing,\sigma_1,\cdots, \sigma_{|\sigma|}:=\sigma\},
\end{equation*}
which is also the unique shortest path relating $\sigma$ to the root $\varnothing$. A vertex $\sigma$ is called accessible if along its ancestral line, the assigned random variables are increasing, i.e.,
\begin{equation}\label{InPaccessibility}
\sigma \text{ accessible }\Leftrightarrow x_{\varnothing}<x_{\sigma_1}<\cdots<x_\sigma.
\end{equation}
This model is called accessibility percolation by Nowak and Krug \cite{Nowak-Krug2013}. We also call $[\![\varnothing,\,\sigma]\!]$ an accessible path if $\sigma$ is accessible.

The model comes from evolutionary biology, in which both mutation and selection involve. As the main source of evolutionary novelty, mutations act on the genetic constitution of an organism. In our setting, each vertex represents one gene type, or genotype. A certain genotype may reproduce several new genotypes through mutations. The mechanism of successive mutations hence gives the structure of trees if we also assume that each mutation gives rise to a new genotype. Selection involves so that organisms better adapted to their respective surroundings are favored to survive. We suppose that each genotype (vertex) has an associated fitness value, which is represented by the assigned random variable. In the strong-selection/weak mutation regime, we assume that only mutations which give rise to a larger fitness value survive. In this way, the survival mutational pathways are noted by the accessible vertices. In this paper, we use `House of Cards' model (see \cite{Kingman1978}), in which all fitness values are i.i.d. As is explained in \cite{Franke-K-V-K2011}, it serves as a null model.

A variation of our model by replacing $N$-ary trees with $N$-dimensional hypercube has been considered in \cite{Berestycki-Brunet-Shi2013} and \cite{Hegarty-Martinsson2013}. More models are introduced in \cite{Aita-U-I-N-K-H2000} and \cite{Franke-K-V-K2011} to explain evolution via mutation and selection.

\subsection{Main results}

For any $k\geq1$, let $\mathcal{A}_{N,k}:=\{\sigma\in T^{(N)}: |\sigma|=k, \sigma\textrm{ is accessible}\}$. We define
\begin{equation}
Z_{N,k}:=\sum_{|\sigma|=k}1_{(\sigma\in\mathcal{A}_{N,k})}=\#\mathcal{A}_{N,k},\quad \forall k\geq1.
\end{equation}

Since we are only concerned with the order of the random variables, under the assumption of continuity of their law, changing the precise distribution will not influence the results. Without loss of generality, we assume throughout the paper that the assigned random variables are distributed uniformly in $[0,1]$, i.e., $\forall\sigma\in T^{(N)}$,  $x_{\sigma}$ has the uniform distribution in $[0,1]$, which is denoted by $U[0,1]$.

For any $x\in[0,1]$, we introduce the following probability measure:
\begin{equation}
\P_x(\cdot):=\P(\cdot\vert x_{\varnothing}=x).
\end{equation}

A nature question is about the survival probability $\P_x(Z_{N,k}\geq1)$. 
Note that for any $N,\, k\geq1$, $Z_{N,k}=\sum_{|\sigma|=k}1_{(\sigma\in\mathcal{A}_{N,k})}=\sum_{|\sigma|=k}1_{(x_{\varnothing}<x_{\sigma_1}<\cdots<x_\sigma)}$. We observe that
\begin{equation}
\E\Big[Z_{N,k}\Big]= N^k \P(x_{\varnothing}<x_{\sigma_1}<\cdots<x_\sigma)=\frac{N^k}{(k+1)!},
\end{equation}
since $x_{\varnothing}, x_{\sigma_1}, x_{\sigma_2}\cdots, x_\sigma$ are i.i.d. and distributed uniformly in $[0,1]$. Immediately,
\begin{equation}\label{eqch3:firstmom}
\E_x\Big[Z_{N,k}\Big]=N^k \P_x(x<x_{\sigma_1}<\cdots<x_\sigma\leq1)=N^k\frac{(1-x)^k}{k!},\quad \forall x\in[0,1].
\end{equation}
Stirling's approximation says that
\begin{equation}\label{InPeqstirling}
2<\frac{k!}{\sqrt{k} (k/e)^k}<3,\quad \forall k\geq1.
\end{equation}
It follows that 
\begin{equation}\label{eq:1mom}
\frac{1}{3\sqrt{k}}\Big(\frac{eN(1-x)}{k}\Big)^k\leq\E_x\Big[Z_{N,k}\Big]\leq\frac{1}{2\sqrt{k}}\Big(\frac{eN(1-x)}{k}\Big)^k.
\end{equation}
It is thus reasonable to take $k=\lfloor \alpha N\rfloor$ with $\alpha>0$. 

For convenience, we write $\alpha N$ to represent the integer $\lfloor \alpha N\rfloor$ throughout this paper. We are interested in the asymptotic behaviors of $Z_{N,\alpha N}$ as $N\rightarrow\infty$.

Nowak and Krug \cite{Nowak-Krug2013} showed that $\liminf_{N\rightarrow\infty}\P_0[Z_{N,\alpha N}\geq1]>0$ for $0<\alpha<1$ and that $\lim_{N\rightarrow\infty}\P_0[Z_{N,\alpha N}\geq1]=0$ for $\alpha\geq e$. This transition of phases implies the existence of a critical value of $\alpha$. Roberts and Zhao \cite{Roberts-Zhao2013} proved that the critical value is $\alpha_c=e$, by considering some typical increasing paths. In fact, we have
\begin{equation}
\lim_{N\rightarrow\infty}\P_0\Big(Z_{N,\alpha N}\geq1\Big)=\left\{\begin{array}{cl}
1 & \textrm{if }\alpha < e;\\
0 & \textrm{if }\alpha \geq e.
\end{array}\right.
\end{equation}

This result tells us that, for $N$ large, roughly speaking, the population of accessible vertices survives until the $e N$-th generation and then dies out. Let us describe the asymptotic behaviors of the population more precisely by the following theorems.

\begin{theorem}\label{InPlimitcurve}
Let $\theta(\alpha):=\alpha(1-\log \alpha)$ for $\alpha>0$.
\begin{itemize}
  \item [(i)] When $\alpha\in(0, e)$, the following convergence holds $\P_0-$almost surely,
 \begin{equation}
  \lim_{N\rightarrow\infty}\frac{Z_{N,\alpha N}}{N}=\theta(\alpha)>0.
  \end{equation}

  \item [(ii)] When $\alpha=e$, we have
  \begin{equation}
  \P_0\Big(Z_{N,\alpha N}\geq1\Big)=N^{-3/2+o_N(1)}\textrm{ as }N\rightarrow\infty,
  \end{equation}
  where $\{o_N(1)\}_{N\geq1}$ is a sequence of real numbers which goes to zero as $N\rightarrow\infty$.

  \item [(iii)] When $\alpha>e$, we have
  \begin{equation}
  \lim_{N\rightarrow\infty}\frac{\log \P_0\Big(Z_{N,\alpha N}\geq1\Big)}{N}=\theta(\alpha)<0.
  \end{equation}

\end{itemize}
\end{theorem}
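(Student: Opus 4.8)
\medskip
\noindent\textbf{Proof proposal.}

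The plan is to handle the three regimes through a common first-moment input and then a (truncated) second-moment input. From \eqref{eq:1mom} and Stirling's bound \eqref{InPeqstirling} one reads off $\tfrac1N\log\E_0[Z_{N,\alpha N}]\to\theta(\alpha)$, and, in the critical case, $\E_0[Z_{N,eN}]=N^{-1/2+o_N(1)}$. Markov's inequality then immediately gives the upper bound $\P_0(Z_{N,\alpha N}\ge1)\le e^{(\theta(\alpha)+o_N(1))N}$ needed for (iii), and Markov plus Borel--Cantelli (since $\sum_N e^{-\varepsilon N}<\infty$) gives $\limsup_N\tfrac1N\log Z_{N,\alpha N}\le\theta(\alpha)$ $\P_0$-a.s., the upper half of (i) --- after coupling all the trees $T^{(N)}$ inside one infinite labelled tree so that an almost sure statement is meaningful. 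Note that for (ii) the first moment overshoots ($N^{-1/2}$ against the asserted $N^{-3/2}$), so the critical case will need a genuinely finer argument.

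The central tool I would use is a many-to-one identity together with the exact law of a conditioned accessible ray: given $x_\varnothing=0$ and that $[\![\varnothing,\sigma]\!]$ with $|\sigma|=k$ is accessible, one has $1-x_{\sigma_i}=\prod_{j=1}^i R_j$ with independent $R_j\sim\mathrm{Beta}(k-j+1,1)$; equivalently $S_i:=-\log(1-x_{\sigma_i})$ is a sum of $i$ independent rate-one exponentials with the deterministic weights $(k-j+1)^{-1}$, $1\le j\le i$, whose typical trajectory realises the linear profile $x_{\sigma_i}\approx i/k$ with small fluctuations around it. This turns every statement about accessible paths confined to a region into a barrier-crossing statement for the inhomogeneous walk $(S_i)_{i\le k}$, with $\E_0[Z_{N,k}]=N^k/k!$ the total mass of the corresponding many-to-one measure.

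For the lower bounds in (i) and (iii) I would run a second moment. Splitting a pair $(\sigma,\tau)$ at the generation $j$ of its last common ancestor gives $\E_0[Z_{N,\alpha N}^2]=\E_0[Z_{N,\alpha N}]+\sum_jC_j$ with $C_j$ an explicit product of binomials and a Beta integral, but one finds $\sum_jC_j=e^{o_N(1)N}(\E_0[Z_{N,\alpha N}])^2$ only for small $\alpha$ (roughly $\alpha<2$); for larger $\alpha<e$, and all the more for $\alpha>e$, the dominant $C_j$ comes from an interior generation with a branch value far off the linear profile and is exponentially larger than $(\E_0[Z_{N,\alpha N}])^2$, so the plain second moment fails. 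The remedy is a truncated second moment: let $Z^{\mathrm{tr}}_{N,k}$ count the accessible $\sigma$ whose walk $(S_i)$ stays in a tube of the right shape about its typical trajectory, checking that (a) the tube costs at most a subexponential factor, so $\E_0[Z^{\mathrm{tr}}_{N,\alpha N}]=e^{(\theta(\alpha)+o_N(1))N}$, and (b) the tube forbids exactly the bad branch values, so $\E_0[(Z^{\mathrm{tr}}_{N,\alpha N})^2]\le e^{o_N(1)N}\E_0[Z^{\mathrm{tr}}_{N,\alpha N}]\max(\E_0[Z^{\mathrm{tr}}_{N,\alpha N}],1)$. For $\alpha<e$ Paley--Zygmund then yields $\P_0(Z_{N,\alpha N}\ge e^{(\theta(\alpha)-\varepsilon)N})\ge c>0$, and for $\alpha>e$ it yields $\P_0(Z_{N,\alpha N}\ge1)\ge e^{-o_N(1)N}\E_0[Z^{\mathrm{tr}}_{N,\alpha N}]=e^{(\theta(\alpha)-o_N(1))N}$, which with the Markov bound completes (iii). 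To upgrade the positive-probability statement for $\alpha<e$ to the a.s.\ convergence of (i) I would exploit self-similarity: at least $\varepsilon N/2$ of the root's children have value below $\varepsilon$ (Chernoff, failure probability $\le e^{-cN}$); inside each such subtree the easy small-parameter second moment gives, with probability $\ge c_1$, at least $e^{c_2N}$ accessible vertices of small value after $\delta N$ more generations; a second Chernoff bound makes a fraction $c_1$ of these children succeed off a set of probability $\le e^{-cN}$; and from each of the resulting $e^{\Omega(N)}$ accessible vertices of small value the truncated second moment supplies an independent chance $\ge c$ of reaching $e^{(\theta(\alpha)-O(\varepsilon+\delta))N}$ descendants at generation $\alpha N$. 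Hence $\P_0\big(Z_{N,\alpha N}<e^{(\theta(\alpha)-O(\varepsilon+\delta))N}\big)\le e^{-cN}+(1-c)^{e^{\Omega(N)}}$, summable in $N$; Borel--Cantelli and $\varepsilon,\delta\downarrow0$, combined with the a.s.\ upper bound, give (i).

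The critical case (ii) is where the real work lies and is the step I expect to be the main obstacle. For the lower bound $\P_0(Z_{N,eN}\ge1)\ge N^{-3/2-o_N(1)}$ I would run the truncated second moment with the sharp tube while tracking polynomial factors: there $\E_0[Z^{\mathrm{tr}}_{N,eN}]=N^{-1/2+o_N(1)}$ while the pair sum is polynomially larger than the diagonal, $\E_0[(Z^{\mathrm{tr}}_{N,eN})^2]=N^{1/2+o_N(1)}$ (about $N$ branch generations, each contributing $\approx N^{-1/2}$), so Paley--Zygmund gives $\gtrsim N^{-3/2}$. For the upper bound $\P_0(Z_{N,eN}\ge1)\le N^{-3/2+o_N(1)}$ I would write $\{Z_{N,eN}\ge1\}\subseteq\{Z^{\mathrm{bar}}_{N,eN}\ge1\}\cup\{\text{some accessible path leaves the barrier}\}$, bound the first event by $\E_0[Z^{\mathrm{bar}}_{N,eN}]$ and the second by a union bound over the generation and vertex of first exit, with the barrier tuned (a line with the correct logarithmic/parabolic bend) so that the resulting ballot-type estimate for $(S_i)$ --- probability of staying below the barrier, times a local-limit correction at the endpoint --- produces exactly the extra factor $N^{-1}\cdot N^{-1/2}=N^{-3/2}$ over the first moment. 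The hard part will be making this two-sided, that is obtaining the sharp $\pm$ in $N^{-3/2+o_N(1)}$ and the uniform control of all the $o_N(1)N$ error terms in the truncated second moments; by comparison the promotion of ``positive probability'' to ``a.s.'' in (i) is routine but still costs something, since it requires summable-in-$N$ deviation bounds rather than mere convergence in probability.
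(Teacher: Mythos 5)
For parts (i) and (iii) your plan is essentially the paper's: Markov plus Borel--Cantelli for the upper bounds, and for the lower bounds a count restricted to stay above the linear profile (the paper's $Z_{N,k,\varepsilon}$, whose first and second moments are exactly the $\psi$-computations of Lemma \ref{InPtypicalpaths} and (\ref{InPsecondmomupp})), Paley--Zygmund, and then seeding at the root's children plus Borel--Cantelli for the almost-sure statement. Your two-layer seeding is heavier than necessary --- a single layer of the $N$ children with value below $\varepsilon$ already gives failure probability $(1-c_8\varepsilon)^N$, summable in $N$ --- but that is a matter of economy, not a gap; likewise you correctly read (i) as a statement about $\frac1N\log Z_{N,\alpha N}$, as the paper's proof does.

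The genuine gap is in (ii), on both sides. For the lower bound, your claimed pair of estimates $\E_0[Z^{\mathrm{tr}}_{N,eN}]=N^{-1/2+o_N(1)}$ and $\E_0[(Z^{\mathrm{tr}}_{N,eN})^2]=N^{1/2+o_N(1)}$ cannot hold simultaneously for any tube: if two paths branch at a macroscopic generation $q$ with the common spine a distance $\delta$ below the profile, each continuation's conditional first moment is boosted by a factor $e^{(1+o(1))\delta k}$ while the spine only pays a Gaussian price of order $e^{-c\delta^2 k}$, so optimizing over $\delta$ makes the off-diagonal sum exponentially large in $N$ unless the tube forbids dips beyond $O(\log N/N)$; but such a tube incurs a ballot factor and forces $\E_0[Z^{\mathrm{tr}}_{N,eN}]=N^{-3/2+o_N(1)}$, not $N^{-1/2+o_N(1)}$. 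The correct mechanism is the opposite of the one you describe: with the hard above-the-line restriction ($\varepsilon=0$) one has $\E_0[Z_{N,eN,0}]\asymp N^{-3/2}$ and the second moment is \emph{diagonal}-dominated, $\E_0[Z_{N,eN,0}^2]\le(1+c_9)\E_0[Z_{N,eN,0}]$ as in (\ref{InPsecondmomupp0}), so Paley--Zygmund yields $c\,N^{-3/2}$ directly (this is how the paper argues, reusing the computation from (iii)). For the upper bound, your decomposition into a barrier event plus a union over the first exit is exactly the paper's (\ref{InPtypicalsurvival}) with shift $L_0\approx\log N$, but the entire content of part (ii) is then the uniform estimate $\P[A_L(K)]\le e^{c_0\sqrt{L}}K^{-3/2}e^K(K+1)^{-K}$ for $L$ up to order $\log N$ (Lemma \ref{InPineqofA}, proved via the delicate induction of Lemma \ref{InPineqofu}); invoking ``a ballot-type estimate times a local-limit correction'' for this inhomogeneous walk is precisely the statement that must be proved, and you explicitly defer it. As it stands, part (ii) is therefore not established by the proposal.
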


\begin{remark}
For $\alpha<e$, the accessible population $Z_{N,\alpha N}$ is exponentially large. Its second order is not given here, but we present some arguments in Appendix \ref{ss:secondorder}. When $\alpha=e$, the explicit order of the survival probability is still unknown. 
\end{remark}
It is clear that the system becomes extinct before the generation $eN$. In the next theorem, we see that the real critical generation is $eN-\frac{3}{2}\log N$.

\begin{theorem}\label{thm:critical}
Let $k=e N- \beta \log N$. Then we have
\begin{equation}
\lim_{N\rightarrow\infty}\P_0\Big(Z_{N,k}\geq1\Big)=\left\{\begin{array}{cl}
1 & \textrm{if }\beta >3/2;\\
0 & \textrm{if }\beta <3/2.
\end{array}\right.
\end{equation}
\end{theorem}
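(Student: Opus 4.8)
The idea is to prove the two halves of Theorem~\ref{thm:critical} by quite different methods: the subcritical half ($\beta<3/2$) by a first-moment (union) bound, and the supercritical half ($\beta>3/2$) by a second-moment / truncated first-moment argument along a well-chosen family of increasing paths, exactly in the spirit of the $\alpha=e$ analysis that underlies Theorem~\ref{InPlimitcurve}(ii).

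\textbf{The easy direction ($\beta<3/2$).} Here I would simply push the first-moment estimate \eqref{eq:1mom} to the exact window $k=eN-\beta\log N$. Writing $k=eN-\beta\log N$, one has $\frac{eN}{k}=1+\frac{\beta\log N}{eN}+O\big(\frac{(\log N)^2}{N^2}\big)$, so
\begin{equation*}
\Big(\frac{eN}{k}\Big)^k=\exp\Big(k\log\frac{eN}{k}\Big)=\exp\Big(\frac{\beta\log N}{e}+o(1)\Big)=N^{\beta/e+o(1)}.
\end{equation*}
Feeding this into \eqref{eq:1mom} with $x=0$ gives $\E_0[Z_{N,k}]=N^{\beta/e-1/2+o(1)}$. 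Since $\beta<3/2$ forces $\beta/e-1/2<3/(2e)-1/2<0$, Markov's inequality yields $\P_0(Z_{N,k}\ge1)\le\E_0[Z_{N,k}]\to0$. This is the routine half; the only care needed is the Stirling expansion of $\log(eN/k)^k$, which is elementary.

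\textbf{The hard direction ($\beta>3/2$).} The first moment is now $N^{\beta/e-1/2+o(1)}$, which blows up, but this alone proves nothing: the survival probability could still vanish because $Z_{N,k}$ is dominated by rare vertices with huge accessible subtrees. The standard fix, and the one I expect to work here, is to restrict attention to \emph{regular} increasing paths whose values $x_{\sigma_1}<\cdots<x_{\sigma_k}$ stay uniformly close to the deterministic profile $x_{\sigma_j}\approx j/k$ (with small windows, say of width $c/k$ or $c\sqrt{\log N}/k$ around $j/k$, and with $x_{\sigma_j}\le j/(k+1)$ type barrier constraints to keep the paths from clustering). Let $\widetilde Z_{N,k}$ count vertices at generation $k$ admitting such a regular accessible path. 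Then I would (a) show $\E_0[\widetilde Z_{N,k}]\to\infty$ — the barrier/window restrictions cost only polynomial-in-$N$ and constant factors, so one still gets $N^{\beta/e-1/2+o(1)}$ with $\beta/e-1/2>0$ when $\beta>3/2$; and (b) bound the second moment by splitting pairs of paths according to their coalescence generation $\ell$, showing $\E_0[\widetilde Z_{N,k}^2]\le C\,\E_0[\widetilde Z_{N,k}]^2$, so that by Paley--Zygmund $\P_0(\widetilde Z_{N,k}\ge1)\ge\E_0[\widetilde Z_{N,k}]^2/\E_0[\widetilde Z_{N,k}^2]$ is bounded away from $0$. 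An ergodicity/0--1 type argument, or a direct union over the $N$ subtrees rooted at depth $1$ to boost a positive constant to $1$, then upgrades $\liminf>0$ to the limit being $1$.

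\textbf{Where the difficulty lies.} The whole game is in the second-moment bound for the correct path class. Two paths sharing their first $\ell$ generations contribute, roughly, $\E_0[\widetilde Z_{N,k}]^2$ times a factor measuring the ``extra'' constraint that both continuations remain accessible and regular given they share the value $x_{\sigma_\ell}\approx\ell/k$; one must show this factor, summed over $\ell=1,\dots,k$, is $O(1)$. This is exactly the computation that produces the $-\tfrac32\log N$ correction: the delicate point is that a random-walk bridge conditioned to stay positive (which is what the increasing constraint becomes after a $-\log$ transform, $S_j=-\log(1-x_{\sigma_j})$ or similar) contributes a polynomial $j^{-3/2}$ factor from the probability of staying below the barrier, and it is the summability of $\sum_\ell \ell^{-3/2}$-type terms against the first-moment growth $N^{\beta/e-1/2}$ that pins the threshold at $\beta=3/2$. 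Getting the barrier function, the window widths, and the ballot-type estimates for the conditioned walk calibrated so that both (a) $\E_0[\widetilde Z_{N,k}]\to\infty$ and (b) the second moment stays comparable — simultaneously — is the real obstacle; everything else is bookkeeping.
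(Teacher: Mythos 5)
Your ``easy direction'' contains a computational error that changes the whole picture. With $k=eN-\beta\log N$ one has $\log\frac{eN}{k}=\frac{\beta\log N}{eN}+O\big(\frac{(\log N)^2}{N^2}\big)$, hence $k\log\frac{eN}{k}=\beta\log N+o(1)$, not $\frac{\beta\log N}{e}+o(1)$; so \eqref{eq:1mom} gives $\E_0[Z_{N,k}]=N^{\beta-\frac12+o(1)}$, not $N^{\beta/e-\frac12+o(1)}$ (and even on your own figures the chain fails, since $\frac{3}{2e}-\frac12>0$). Consequently the first moment diverges for every $\beta\in(\frac12,\frac32)$ and Markov's inequality cannot prove extinction there: the subcritical half is precisely where the hard input is needed. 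The paper splits the accessible vertices at generation $K$ into those whose path stays above the shifted line $\frac{(j-L_0)_+}{K+1}$ with $L_0=2\log N$, handled by Lemma \ref{InPineqofA} (the bound $\P[A_{L_0}(K)]\le e^{c_0\sqrt{L_0}}K^{-3/2}e^K(K+1)^{-K}$, whose proof via Lemma \ref{InPineqofu} is exactly the ballot-type $K^{-3/2}$ estimate you defer to the other half), and those whose path dips below the line at some generation $k$, handled by a first moment on paths with endpoint below $\frac{k-L_0}{K+1}$, which costs the factor $e^{-L_0}=N^{-2}$; both contributions are then $N^{\beta-\frac32+o(1)}\to0$. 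A plain union bound over all accessible vertices does not work.

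For $\beta>\frac32$, two of your steps would fail as stated. First, the uniform second-moment bound $\E_0[\widetilde Z_{N,k}^2]\le C\,\E_0[\widetilde Z_{N,k}]^2$ is false at $k=eN-\beta\log N$ for the natural one-sided-barrier count: in \eqref{InPsecondmomupp} the coalescence sum $\sum_{q}N^{-q}\psi(k-q,k-q,\frac{q}{k})/\psi(k,k,0)$ has its first $\asymp N/\log N$ terms of constant order (the factor $(\frac{k}{eN})^q=e^{-q\beta\log N/(eN)(1+o(1))}$ only decays on the scale $q\sim N/\log N$), so $\E_0[\widetilde Z^2]\gtrsim \frac{N}{\log N}\E_0[\widetilde Z]^2$ and Paley--Zygmund yields only $\gtrsim \frac{\log N}{N}$, not a constant; your two-sided windows of width $c/k$ or $c\sqrt{\log N}/k$ around $j/k$ would be worse, suppressing the first moment by $e^{-\Theta(k/\log N)}$, since only one-sided barrier constraints cost merely polynomial factors. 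Second, the boosting step cannot be ``a direct union over the $N$ subtrees rooted at depth $1$'': the per-subtree success probability at this generation is genuinely of order $N^{-3/2+o(1)}$ (cf.\ part (ii) of Theorem \ref{InPlimitcurve}), so $(1-cN^{-3/2})^N\to1$, and no zero-one law is available because each $N$ is a separate finite tree. What is missing is the paper's two-stage construction: first run $k_0=\gamma\log N$ generations with all values confined below $\delta_N=\frac{(\gamma+3/2)\log N}{eN}$ to produce, with probability $1-o(1)$, at least $N^{\frac32+\varepsilon}$ accessible seeds (a Paley--Zygmund argument inside the window with $\gamma$ chosen so large that $\gamma\log(1+\frac{\beta-\varepsilon}{\gamma})>\frac32+\varepsilon$, amplified over the root's $N$ children); then each seed independently completes a path of length $K_0=eN-(\beta+\gamma)\log N$ above the line $\delta_N+(1-\delta_N)\frac{j-1}{K_0}$ with probability $\ge cN^{-3/2}$, so the failure probability is at most $(1-cN^{-3/2})^{N^{3/2+\varepsilon}}+o(1)\to0$. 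Without such a seeding device that supplies more than $N^{3/2}$ nearly independent trials, your outline cannot reach a limit equal to $1$.
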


At the critical generation $k=eN-\frac{3}{2}\log N$, the survival probability is not clear at this moment. We state the following proposition, which only gives a lower bound.
\begin{proposition}\label{prop:critical}
For any $\varepsilon>0$ and $n$ sufficiently large, we have
\begin{equation}
\P_0\Big(Z_{N,eN-\frac{3}{2}\log N}\geq1\Big)\geq N^{-\varepsilon}.
\end{equation}
\end{proposition}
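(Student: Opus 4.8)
The idea is to produce a specific "typical" accessible path at generation $k = eN - \frac32 \log N$ whose presence has probability at least $N^{-\varepsilon}$, using a first-and-second-moment argument restricted to paths that stay near the optimal increasing profile. Along an accessible path the uniform labels $x_{\varnothing} < x_{\sigma_1} < \cdots < x_{\sigma_k}$ behave like the order statistics of $k+1$ uniforms; the optimal strategy (as in Roberts–Zhao) is for the label at level $i$ to sit near $i/k$, so that each of the $N$ children has probability $\approx (1-i/k)/1 \cdot$ (local spacing) of continuing the increase. Concretely, fix small $\delta>0$ and consider the event that there exists $\sigma$ with $|\sigma|=k$ and $x_{\sigma_i} \in I_i := [\tfrac{i}{k} - \tfrac{\delta}{k}\sqrt{i\wedge(k-i)},\ \tfrac{i}{k} + \tfrac{\delta}{k}\sqrt{i\wedge(k-i)}]$ for all $i$. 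Let $W$ be the number of such vertices.

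First I would compute $\e_0[W]$. Conditioning on $x_{\varnothing}=0$ and integrating over the constrained labels, $\e_0[W] = N^k \int_{0 < t_1 < \cdots < t_k,\, t_i \in I_i} \d t_1 \cdots \d t_k$, and a Laplace/Gaussian-type estimate for this constrained simplex volume should give $\e_0[W] = N^k \cdot \frac{1}{k!} \cdot \rho(\delta)$ where $\rho(\delta) \in (0,1)$ is the probability that a Brownian-bridge-like fluctuation stays in a tube of width $\delta$ — in particular $\rho(\delta)$ does not decay with $N$ for fixed $\delta$. Combined with Stirling \eqref{InPeqstirling} and the choice $k = eN - \frac32\log N$, one gets
\begin{equation*}
\e_0[W] = \frac{N^k}{k!}\,\rho(\delta) \asymp \frac{1}{\sqrt{k}}\Bigl(\frac{eN}{k}\Bigr)^k \rho(\delta) = \frac{1}{\sqrt{k}}\Bigl(1 + \frac{\frac32\log N}{eN - \frac32\log N}\Bigr)^k \rho(\delta),
\end{equation*}
and since $\bigl(1 + \frac{(3/2)\log N}{eN}\bigr)^{eN} \approx N^{3/2}$, this is of order $N^{3/2}/\sqrt{N}\cdot\rho(\delta) = N \cdot \rho(\delta) \to \infty$. (In fact $\e_0[W]$ grows polynomially, which is the whole point: there is a lot of room at this generation.)

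The main obstacle is the second moment: I would bound $\e_0[W^2] = \sum_{\sigma,\sigma'} \P_0(\sigma,\sigma' \text{ both in the tube})$ by splitting on the generation $j = |\sigma \wedge \sigma'|$ of the most recent common ancestor. For each $j$ there are $\approx N^k \cdot N^{k-j}$ ordered pairs sharing the first $j$ steps, and the joint constraint factorizes as (probability both paths realize the tube up to level $j$ along the shared labels) $\times$ (probability each of the two independent continuations realizes the tube from level $j$ to $k$, starting from a label $\approx j/k$). The shared part contributes roughly $\frac{1}{j!}\rho_j$ and each continuation roughly $\frac{(1-j/k)^{k-j}}{(k-j)!}\rho'_j$; multiplying by $N^{2k-j}$ and comparing to $(\e_0[W])^2 \asymp (N^k/k!)^2\rho^2$, the ratio for the generation-$j$ term is of order $N^{-j} \binom{k}{j}^{-1}\cdot(\text{tube factors})^{-1} \cdot (\text{something subexponential})$ — the combinatorial gain $N^{-j}\binom{k}{j}^{-2}\binom{k-j}{\cdot}$ should beat the tube-probability losses uniformly in $j$, so that $\sum_j (\text{term } j) \le C\,(\e_0[W])^2$ for a constant $C = C(\delta)$. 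This is the delicate computation: one must check that small $j$ (near the root, where sharing is cheap) does not blow up, and that the Gaussian tube factors $\rho_j, \rho'_j$ stay bounded below. Granting this, Paley–Zygmund gives
\begin{equation*}
\P_0(W \ge 1) \ge \frac{(\e_0[W])^2}{\e_0[W^2]} \ge \frac{1}{C(\delta)} > 0,
\end{equation*}
which is in fact a constant lower bound, hence certainly $\ge N^{-\varepsilon}$ for large $N$; the $N^{-\varepsilon}$ formulation presumably absorbs the case where one instead only controls the second moment up to a slowly growing factor. If the clean second-moment bound fails, the fallback is to localize further — restrict to an even narrower tube or to a subtree of bounded degree along the path — at the cost of a polynomial-in-$N$ factor, which is exactly what the $N^{-\varepsilon}$ slack in the statement is designed to allow.
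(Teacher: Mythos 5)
There is a genuine gap, and it is exactly at the step you defer as ``the delicate computation'': the one-shot second moment at generation $k=eN-\frac32\log N$ does \emph{not} close, no matter how the tube is tuned. The term of $\E_0[W^2]$ coming from pairs branching at generation $j$ is, relative to $(\E_0[W])^2$, of order $\bigl(N^{j}A_j\bigr)^{-1}$, where $A_j$ is the probability that the first $j$ labels are increasing and follow your profile, i.e.\ sit near $j/k$. Since $A_j\approx \frac{(j/k)^j}{j!}\cdot\tau_j\approx\frac{(e/k)^j}{\sqrt j}\tau_j$ with $\tau_j\le 1$ the tube correction, one gets $N^jA_j\approx \frac{(eN/k)^j}{\sqrt j}\tau_j=\frac{N^{3j/(2eN)}}{\sqrt j}\tau_j$, which is $O(1)$ (indeed $\lesssim 1/\sqrt j$) for all $j\ll N/\log N$. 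In words: along the linear profile $i/k$ the expected number of usable accessible vertices at generation $j$ stays bounded for the first $\Theta(N/\log N)$ generations, so each such $j$ contributes at least a constant (in fact $\gtrsim\sqrt j$) to $\E_0[W^2]/(\E_0[W])^2$, and summing gives $\E_0[W^2]\gtrsim \frac{N}{\log N}(\E_0[W])^2$. Paley--Zygmund then yields only $\P_0(W\ge1)\gtrsim \log N/N$, far short of $N^{-\varepsilon}$. Narrowing the tube or restricting to a bounded-degree subtree, as in your fallback, only makes $A_j$ smaller and the bound worse; the obstruction is the root-correlation, not the tube width. (A secondary inaccuracy: the probability $\rho(\delta)$ that the bridge stays in a tube of width $\delta\sqrt{i\wedge(k-i)}/k$ is a square-root--boundary probability and decays polynomially in $k$ for fixed $\delta$, not to a positive constant; this alone would not be fatal, but the claim as stated is wrong.)

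The missing idea, which is what the paper does, is a two-stage construction that changes the profile at the start. One first crams $k_0=\gamma\log N$ generations into the tiny initial window $[0,\delta_N]$ with $\delta_N=\frac{(\gamma+3/2)\log N}{eN}$; a second-moment argument \emph{at that scale} shows that with probability bounded below one obtains at least $N^{3/2-\varepsilon}$ accessible ``seeds'' at generation $k_0$ with labels below $\delta_N$ (their expected number is $\asymp N^{\gamma\log(1+3/(2\gamma))}$, which exceeds $N^{3/2-\varepsilon}$ for $\gamma$ large). From each seed one then runs an independent trial of length $K_0=eN-(\gamma+3/2)\log N$ following the linear profile above $\delta_N$, each succeeding with probability $\gtrsim N^{-3/2}$ by the Paley--Zygmund bound (\ref{eq:lowerbdofsur}). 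The success probability is then $\gtrsim 1-(1-cN^{-3/2})^{N^{3/2-\varepsilon}}\gtrsim N^{-\varepsilon}$. Note that the $N^{-\varepsilon}$ in the statement is not slack absorbed from a second-moment constant, as you suggest; it is precisely the deficit $N^{3/2-\varepsilon}$ versus $N^{3/2}$ in the number of seeds one can guarantee, coming from $\gamma\log(1+\tfrac{3}{2\gamma})<\tfrac32$ for every finite $\gamma$.
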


It is possible to replace the $N$-ary tree by the Galton-Watson tree whose offspring is Poisson with parameter $N$, in which case all these results still hold.

The rest of the paper is organized as follows. In Section \ref{ss:InPphase-transition}, we state some basic results of the accessible population and the increasing paths. In Section \ref{ss:InPlimitcurve} we prove Theorem \ref{InPlimitcurve}. Finally, in Section \ref{ss:critical}, we show the criticality at $eN-\frac{3}{2}\log N$, by proving Theorem \ref{thm:critical} and \ref{prop:critical}.

Throughout the paper, we use the letter $c$ with subscript to denote a finite and positive constant.

\section{Basic ideas of the increasing paths}
 \label{ss:InPphase-transition}

\subsection{The generating function of $Z_{N,k}$}
As $Z_{N,k}$ is an integer-valued r.w., we consider its generation function $\E_x\Big(s^{Z_{N,k}}\Big)$ in this subsection.

Generally, for any $0\leq a<b\leq1$, we define $Z_{N,k}(a,b)$ as follows:
\begin{equation}\label{InPpartialpopulation}
Z_{N,k}(a,b):=\sum_{|\sigma|=k} 1_{(a<x_{\sigma_1}<\cdots<x_{\sigma_k}\leq b)},\quad\forall k\geq1.
\end{equation}
For convenience, we write $Z_{N,k}(b)$ for $Z_{N,k}(0,b)$ and set $Z_{N,0}(b)\equiv1$. One sees that $Z_{N,k}(b-a)$ and $Z_{N,k}(a,b)$ have the same law.  Let $f_k^{(N)}(s,b)$ be the generating function of $Z_{N,k}(b)$, i.e.,
\begin{equation}\label{InPgeneratingfuncpartialpopulation}
f_k^{(N)}(s,b):=\E\Big[s^{Z_{N,k}(b)}\Big],\quad \forall s\in[0,1].
\end{equation}
For $k=1$, $Z_{N,1}(b)$ is a binomial variable with parameter $(N, b)$. So $f_1^{(N)}(s,b)=[1-b+sb]^N$.

For any $k\geq 1$, one observes that
\begin{equation}
Z_{N,k+1}(b)=\sum_{|\sigma|=1}1_{(x_\sigma<b)}\sum_{|\omega|=k+1}1_{(\omega_1=\sigma)}1_{(x_\sigma<x_{\omega_2}<\cdots<x_\omega\leq b)}.
\end{equation}
For all vertices $\sigma$ of the first generation, the variables $1_{(x_\sigma<b)}\sum_{|\omega|=k+1}1_{(\omega_1=\sigma)}1_{(x_\sigma<x_{\omega_2}<\cdots<x_\omega\leq b)}$ are i.i.d., and given $\{x_\sigma=y<b\}$, $\sum_{|\omega|=k+1}1_{(\omega_1=\sigma)}1_{(x_\sigma<x_{\omega_2}<\cdots<x_\omega\leq b)}$ is distributed as $Z_{N,k}(y,b)$. It follows that for any $k\geq1$ and any $b\in[0,1]$,
\begin{equation}\label{InPrecursioneq}
\begin{split}
f_{k+1}^{(N)}(s,b)=&\Big[1-b+\int_0^b dy\E\Big(s^{Z_{N,k}(y,b)}\Big)\Big]^N\\
=&\Big[1-b+\int_0^b f_k^{(N)}(s,b-y)dy\Big]^N\\
=&\Big[1-b+\int_0^b f_k^{(N)}(s,y)dy\Big]^N.
\end{split}
\end{equation}
For brevity, we denote the generating function of $Z_{N,k}$ under $\P_0$ by $f_k^{(N)}(s)$ instead of $f_k^{(N)}(s,1)$. Immediately,
\begin{equation}
f_{k+1}^{(N)}(s)=\Big[\int_0^1 f_k^{(N)}(s,y)dy\Big]^N,\quad\forall k\geq1.
\end{equation}
This gives that
\begin{equation}
\P_0\Big(Z_{N,k}\geq1\Big)=1-f_k^{(N)}(0)=1-\Big[\int_0^1 f_{k-1}^{(N)}(0,y)dy\Big]^N,\quad\forall k\geq2.
\end{equation}
To study the law of $Z_{N,k}$, it suffices to study $(\ref{InPrecursioneq})$. However, it is quite difficult to investigate analytically the sequence $f_k^{(N)}$, $k\geq1$, from the recursive relation (\ref{InPrecursioneq}). We thus turn to study the accessible vertices via their paths. 

\subsection{Typical accessible paths}
To study an increasing path, we let $\{U_j; j\geq1\}$ be a sequence of i.i.d. $U[0,1]$ random variables. Observe that for any $1\leq j\leq k$,
\begin{equation}
\E\Big(U_j\Big\vert U_1\leq U_2\leq \cdots \leq U_k\Big) = \frac{j}{k+1}.
\end{equation}
This leads us to comparing an increasing path $\{U_1\leq U_2\leq \cdots \leq U_k\}$ with the line $\{ \frac{j}{k+1}; 1\leq j\leq k\}$. For example, in Lemma 2 of \cite{Roberts-Zhao2013}, the authors showed that 
$$\P\Big(U_1\leq U_2\leq \cdots \leq U_k; U_j\geq\frac{j}{k+1},\forall 1\leq j\leq k\Big)=\frac{1}{(k+1)!}.$$

In what follows, we generalize their ideas and state the two lemmas, which estimate the probabilities of some typical accessible paths.
\begin{lemma}\label{InPtypicalpaths}
\begin{itemize}
  \item[(1)] For any $1\leq k\leq J-1$,
\begin{equation}\label{InPtypicalpath1}
\phi(k,J):=\P\left(U_1\leq \cdots\leq U_k; U_j\geq\frac{j}{J},\forall 1\leq j\leq k\right)=\frac{J-k}{k!J}.
\end{equation}
\item[(2)] For any $\varepsilon\in[0,1)$ and $1\leq k\leq J$,
\begin{equation}\label{InPtypicalpath2}
\begin{split}
\psi(k,J,\varepsilon):=&\P\left(U_1\leq \cdots\leq U_k; U_j\geq \varepsilon+(1-\varepsilon)\frac{j-1}{J},\forall 1\leq j\leq k\right)\\
=&\frac{(1+1/J)^k(J+1-k)}{k!(J+1)}(1-\varepsilon)^k.
\end{split}
\end{equation}
\end{itemize}
\end{lemma}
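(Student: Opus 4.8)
The plan is to prove both identities by induction on $k$, exploiting the fact that conditioning on (or integrating over) the value of the last variable $U_k$ reduces the event for the first $k-1$ variables to an event of the same type but with a shifted threshold, which can then be matched against part (1) applied at a different level. Concretely, for part (1) I would write
\begin{equation*}
\phi(k,J)=\int_0^1 \P\left(U_1\leq\cdots\leq U_{k-1}\leq t;\; U_j\geq \tfrac{j}{J},\,\forall 1\leq j\leq k-1\right)\1_{(t\geq k/J)}\,\d t,
\end{equation*}
and observe that the inner probability is, after rescaling the first $k-1$ coordinates to live on $[0,t]$, exactly $t^{k-1}$ times a probability of the form handled by $\psi(k-1,\cdot,\cdot)$ — or, more cleanly, I would prove (1) and (2) together in a single induction so that the inner probability in the recursion for one is literally an instance of the other. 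The base case $k=1$ is immediate: $\phi(1,J)=\P(U_1\geq 1/J)=1-1/J=(J-1)/J$, matching $\frac{J-k}{k!J}$ with $k=1$; and $\psi(1,J,\varepsilon)=\P(U_1\geq\varepsilon)=1-\varepsilon$, matching $\frac{(1+1/J)(J)}{(J+1)}(1-\varepsilon)=(1-\varepsilon)$.

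For part (2), the natural substitution is to note that the constraints $U_j\geq \varepsilon+(1-\varepsilon)\frac{j-1}{J}$ together with $U_1\leq\cdots\leq U_k$ force all $U_j\geq\varepsilon$, so I would change variables $V_j:=(U_j-\varepsilon)/(1-\varepsilon)$. Under this map the $V_j$ are i.i.d.\ $U[0,1]$ conditioned to lie in an appropriate region, the Jacobian contributes the factor $(1-\varepsilon)^k$, and the event becomes $\{V_1\leq\cdots\leq V_k;\ V_j\geq \frac{j-1}{J}\}$. Thus $\psi(k,J,\varepsilon)=(1-\varepsilon)^k\,\psi(k,J,0)$, and it remains to compute $\psi(k,J,0)=\P(V_1\leq\cdots\leq V_k;\ V_j\geq\frac{j-1}{J})$. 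This last quantity I would again attack by integrating over $V_k=t$: the inner event on $V_1,\dots,V_{k-1}$ rescales to $t^{k-1}\phi(k-1,\text{something})$ after identifying $\frac{j-1}{J}\le \frac{j-1}{J}$-type thresholds, and one is led to a one-dimensional integral $\int_{(k-1)/J}^{1} t^{k-1}\,(\text{polynomial in }t)\,\d t$ whose evaluation produces the claimed closed form $\frac{(1+1/J)^k(J+1-k)}{k!(J+1)}$. Alternatively, and perhaps more transparently, one can relate $\psi(k,J,0)$ to $\phi(k,J+1)$ directly: the thresholds $\frac{j-1}{J}=\frac{(j-1)(1+1/J)}{J+1}\cdot\frac{J+1}{J}$ suggest a linear rescaling of the $V_j$ by $1+1/J$, which after truncation at $1$ converts the "$(j-1)/J$" barrier into a "$j/(J+1)$"-type barrier and explains the prefactor $(1+1/J)^k$ and the shift $J\mapsto J+1$ in the formula.

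The main obstacle I anticipate is bookkeeping rather than conceptual: making the rescaling arguments rigorous when the rescaled variables can exceed $1$ (so the "conditioning on i.i.d.\ uniforms" picture must be replaced by a direct Lebesgue-measure computation on the simplex-like region $\{0\le u_1\le\cdots\le u_k\le 1\}$ intersected with the half-spaces $u_j\ge \ell_j$), and keeping the off-by-one factors straight so that the induction closes to give exactly $\frac{J-k}{k!J}$ and $\frac{(1+1/J)^k(J+1-k)}{k!(J+1)}(1-\varepsilon)^k$ and not merely the right leading order. I would therefore set up the induction to carry both identities simultaneously, verify the two base cases as above, and in the inductive step reduce $\phi(k,J)$ to an integral of $\psi(k-1,\cdot,\cdot)$ and $\psi(k,J,0)$ to an integral of $\phi(k-1,\cdot)$, checking at the end that the resulting elementary integrals evaluate to the stated rational functions of $k$ and $J$.
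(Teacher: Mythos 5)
Your change of variables $V_j=(U_j-\varepsilon)/(1-\varepsilon)$, giving $\psi(k,J,\varepsilon)=(1-\varepsilon)^k\psi(k,J,0)$, is exactly the first step of the paper's proof of (2). Beyond that, however, the proposal defers every computation that actually constitutes the lemma. For part (1) you never evaluate anything: you reduce $\phi(k,J)$ to an integral over the last coordinate and assert that the inner probability, after rescaling by $t$, is literally an instance of the other identity — but the rescaled thresholds are $j/(Jt)$ with $Jt$ not an integer, so it is not an instance of $\phi$ or $\psi$ as defined; you would first have to extend the closed forms to a real second argument (or, as the paper does, simply compute the nested integral $\int_{k/J}^1\int_{(k-1)/J}^{u_k}\cdots\int_{1/J}^{u_2}du_1\cdots du_k$ directly, where the pattern $\frac{u_j^{j-1}}{(j-1)!}-\frac{1}{J}\frac{u_j^{j-2}}{(j-2)!}$ propagates and the final cancellation yields $\frac{J-k}{k!J}$). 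As written, the closed forms are announced, not derived.

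For part (2), the missing idea is the paper's way of closing the argument: apply the scaling identity you already have at the particular value $\varepsilon=\frac{1}{J+1}$ and observe that then $\varepsilon+(1-\varepsilon)\frac{j-1}{J}=\frac{j}{J+1}$ exactly, so $\psi\big(k,J,\tfrac{1}{J+1}\big)=\phi(k,J+1)=\frac{J+1-k}{k!(J+1)}$; solving $\psi\big(k,J,\tfrac{1}{J+1}\big)=\big(1-\tfrac{1}{J+1}\big)^k\psi(k,J,0)$ gives $\psi(k,J,0)$ and hence (2), with no truncation issue because the map involved is a contraction of $[0,1]$ onto $[\varepsilon,1]$. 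Your ``alternative'' route — expanding the $V_j$ by $1+1/J$ and truncating at $1$ — is this same connection run backwards, but you acknowledge you have not made it rigorous, and the identity you write to motivate it, $\frac{j-1}{J}=\frac{(j-1)(1+1/J)}{J+1}\cdot\frac{J+1}{J}$, is false as stated (the correct relation is $\frac{j-1}{J}=\frac{(j-1)(1+1/J)}{J+1}$, without the last factor). So the outline points in the right direction, but the proof is not there: part (1) is an unevaluated integral, and the link between $\psi(k,J,0)$ and $\phi(k,J+1)$ — the one step that makes (2) work — is left unpinned.
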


\begin{proof}[Proof.]
According to the assumption, we compute $\phi(k,J)$ directly.
\begin{equation*}
\begin{split}
\phi(k,J)=& \int_{k/J}^1\int_{(k-1)/J}^{u_k}\cdots\int_{1/J}^{u_2}du_1\cdots du_k\\
=&\int_{k/J}^1\int_{(k-1)/J}^{u_k}\cdots\int_{j/J}^{u_{j+1}}\Big(\frac{u^{j-1}_j}{(j-1)!}-\frac{1}{J}\frac{u_j^{j-2}}{(j-2)!}\Big)du_j\cdots du_k\\
=&\frac{J-k}{k!J},
\end{split}
\end{equation*}
giving (\ref{InPtypicalpath1}). 

We now compute $\psi$ by using $\phi$. Rewrite $\psi(k,J,\varepsilon)$ as follows:
\begin{eqnarray*}
\psi(k,J,\varepsilon)&=&\int_{\varepsilon+(1-\varepsilon)\frac{k-1}{J}}^1\cdots\int_{\varepsilon}^{u_2}du_1\cdots du_k.
\end{eqnarray*}
Take $u_j=\varepsilon+(1-\varepsilon)v_j$ for all $1\leq j\leq k$. By a change of variables,
\begin{equation}\label{eq:condInP}
\psi(k,J,\varepsilon)=\int_{(k-1)/J}^1\cdots\int_0^{v_2}(1-\varepsilon)^kdv_1\cdots dv_k=(1-\varepsilon)^k\psi(k,J,0).
\end{equation}
In particular, when $\varepsilon=\frac{1}{J+1}$, we have
$$
\psi(k,J,\frac{1}{J+1})=\psi(k,J,0)(1-\frac{1}{J+1})^{k}.
$$
On the other hand, when $\varepsilon=\frac{1}{J+1}$, $\varepsilon+(1-\varepsilon)\frac{j-1}{J}=\frac{j}{J+1}$ for any $1\leq j\leq k$. Hence,
\begin{equation*}\label{InPrelatingeq}
\psi(k,J,\frac{1}{J+1})=\phi(k,J+1)=\frac{J+1-k}{k!(J+1)}.
\end{equation*}
It follows that $\psi(k,J,0)=\psi(k,J,\frac{1}{J+1})(1-\frac{1}{J+1})^{-k}=\frac{(1+1/J)^k(J+1-k)}{k!(J+1)}$. By (\ref{eq:condInP}), we then obtain that
\begin{equation}
\psi(k,J,\varepsilon)=(1-\varepsilon)^k\psi(k,J,0)=\frac{(1+1/J)^k(J+1-k)}{k!(J+1)}(1-\varepsilon)^k,
\end{equation}
as desired.
\end{proof}

Following the assumption of Lemma \ref{InPtypicalpaths}, we define for any $0\leq L<K$,
\begin{equation}\label{InPdefofA}
A_L(K):=\left\{U_1< \cdots< U_K; U_{j}\geq \frac{(j-L)_+}{K+1}; \forall 1\leq j\leq K\right\}.
\end{equation}
Obviously, $\P[A_0(K)]=\phi(K,K+1)=\frac{1}{(K+1)!}$ by (\ref{InPtypicalpath1}).

\begin{lemma}\label{InPineqofA}
There exists a positive constant $c_0>0$ such that for any $1\leq L<K$,
\begin{equation}\label{InPuppofA}
\P\Big[A_L(K)\Big]\leq\frac{e^{c_0\sqrt{L}}}{K^{3/2}}\frac{e^K}{(K+1)^K}.
\end{equation}
\end{lemma}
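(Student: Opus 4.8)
The plan is to bound $\P[A_L(K)]$ by conditioning on the position of $U_L$ (the last index where the "floor" constraint is trivial, since $\frac{(j-L)_+}{K+1}=0$ for $j\le L$). Write the event $A_L(K)$ as the intersection of $\{U_1<\cdots<U_L\}$, which imposes no lower bounds, with $\{U_{L+1}<\cdots<U_K\}$ together with the constraints $U_j\ge\frac{j-L}{K+1}$ for $L+1\le j\le K$. Conditioning on $U_L=u$, the first block contributes the density of the maximum of $L$ i.i.d.\ uniforms, namely $L u^{L-1}\,\d u$, while the tail block is exactly an event of the type controlled by $\psi$ in Lemma~\ref{InPtypicalpaths}(2): after shifting indices by $L$, we need $U_1'<\cdots<U_{K-L}'$ in $[u,1]$ with $U_i'\ge u$ and $U_i'\ge\frac{i}{K+1}$. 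The cleanest route is to drop the requirement $U_i'\ge u$ on the tail (it only helps the bound if $u$ is small, and is harmless to discard) and also use $\{U_1<\cdots<U_L\}\subset\{U_L\ge \frac{L}{K+1}\}$ is \emph{not} automatic, so instead I would keep the integral over $u\in[0,1]$ honestly. Thus
\begin{equation*}
\P[A_L(K)]\le \int_0^1 L u^{L-1}\,\psi\!\left(K-L,\,K,\,\tfrac{L}{K+1}\right)\d u \quad\text{is too lossy;}
\end{equation*}
more carefully, I would instead integrate the joint density directly, writing $\P[A_L(K)]=\int_0^1 L u^{L-1}\, g(u)\,\d u$ where $g(u)$ is the probability that $K-L$ ordered uniforms on $[u,1]$ satisfy the shifted floor constraints, and bound $g(u)$ using the explicit $\psi$ formula evaluated at the appropriate $\varepsilon=\max(u,\frac{L}{K+1})/\cdots$ — the key point being that $\psi(m,J,\varepsilon)=(1-\varepsilon)^m\psi(m,J,0)$ from \eqref{eq:condInP}.

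Concretely, the main estimate I expect to need is: after integrating out the first $L$ coordinates, $\P[A_L(K)]$ equals (or is bounded by) a constant times
\begin{equation*}
\frac{(1+1/K)^{K-L}\,(L+1)}{(K-L)!\,(K+1)}\int_0^1 L u^{L-1}\Big(1-\tfrac{(L\vee (K+1)u)}{K+1}\Big)^{\,?}\,\d u,
\end{equation*}
and then the whole task reduces to (a) recognizing the combinatorial prefactor as $\frac{1}{(K+1)!}\binom{K}{L}^{-1}\times(\text{polynomial corrections})$ — or more simply, using $\P[A_0(K)]=\frac1{(K+1)!}$ as the anchor and tracking how the $L$ relaxations inflate it — and (b) showing the inflation factor is at most $e^{c_0\sqrt L}$. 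For (b) I would use Stirling \eqref{InPeqstirling} to convert $\frac{1}{(K-L)!}$ versus $\frac{1}{(K+1)!}$ into $(K+1)K\cdots(K-L+1)\approx (K+1)!\,/\,(K-L)!$, extract the target factor $\frac{e^K}{(K+1)^K}\cdot K^{-3/2}$, and check the residual is $\exp(O(\sqrt L))$; the $\sqrt L$ (rather than $L$) should come out of the $u$-integral, since $\int_0^1 L u^{L-1}(1-u)^{a}\,\d u=\frac{L!\,a!}{(L+a)!}=\frac{\Gamma(L+1)\Gamma(a+1)}{\Gamma(L+a+1)}$, a Beta integral, and when $a$ is comparable to $L$ this is of order $2^{-\Theta(L)}$ — so one cannot be wasteful, and the $\sqrt L$ budget must absorb the gap between the crude bound and the true Beta value near its saddle.

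The hard part will be organizing the bookkeeping so that exactly one factor of $K^{-3/2}$ survives (the same $3/2$ that appears in Theorem~\ref{InPlimitcurve}(ii) and Theorem~\ref{thm:critical}) while the $L$-dependence is squeezed into $e^{c_0\sqrt L}$ and nothing worse. In particular, I expect the $\binom{K}{L}$-type ratio and the Beta-integral to individually produce exponentially-in-$L$ large and small factors that must be shown to cancel up to $e^{O(\sqrt L)}$; getting this cancellation cleanly — rather than through a brute-force asymptotic expansion — is the crux. A secondary subtlety is handling the strict versus non-strict inequalities ($U_1<\cdots<U_K$ vs.\ $\le$), but since the $U_j$ are continuous this changes nothing and I would note it in one line. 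I would also double-check the edge cases $L$ close to $K$, where $(K-L)!$ is small and the bound \eqref{InPuppofA} is weakest, to make sure $c_0$ can be chosen uniformly.
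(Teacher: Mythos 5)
There is a genuine gap here: what you have written is a plan that defers the actual estimate. You correctly identify that the naive bound (dropping the coupling between the first block and the tail) loses a factor of order $\binom{K}{L}$, and you correctly identify that the whole content of the lemma is that the $L$ relaxed constraints inflate $\P[A_0(K)]=\frac{1}{(K+1)!}$ by only $e^{O(\sqrt L)}$ — but you then label this cancellation "the crux" and "the hard part" and give no argument for it, leaving a literal ``$?$'' in the exponent of your main display. A further structural problem is that your tail probability $g(u)$ is \emph{not} a $\psi$-type event: after conditioning on $U_L=u$, the remaining $K-L$ ordered uniforms must satisfy $V_i\ge\max\big(u,\tfrac{i}{K+1}\big)$, i.e.\ the floor is the maximum of a constant and a linear ramp, whereas $\psi(k,J,\varepsilon)$ only covers affine floors $\varepsilon+(1-\varepsilon)\tfrac{i-1}{J}$. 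So the "explicit $\psi$ formula evaluated at the appropriate $\varepsilon$" cannot be applied directly, and estimating such max-of-constant-and-ramp events is precisely the technical heart of the problem, not a step that can be outsourced to Lemma~\ref{InPtypicalpaths}.

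For comparison, the paper proceeds quite differently: it uses the nesting $A_1(K)\subset\cdots\subset A_L(K)$ to write $\P[A_L(K)]=\P[A_1(K)]+\sum_{i=1}^{L-1}\P[A_{i+1}(K)\setminus A_i(K)]$, decomposes each difference over the first index $k$ at which the stronger constraint fails, and factors the result into a head piece $p_{i,k}$ and a tail piece $q_{i,k}$ (the tail \emph{is} a genuine $\psi$-event because the conditioning point coincides with the ramp). The head piece is controlled by an auxiliary quantity $u_{i,k}=\P(D_{i,k})$ — exactly the max-of-constant-and-ramp probability you would need for $g(u)$ — and the bound $u_{i,k}\le\frac{e^{k-i}e^{c_1\sqrt{i-1}+2}}{(k+1-i)^k k^{3/2}}$ is proved by a separate induction on $i$ (Lemma~\ref{InPineqofu}), where the $e^{O(\sqrt i)}$ growth emerges because each inductive step costs a factor $1+O(i^{-1/2})$ coming from a convolution sum $\sum_j (i+j)^{-3/2}(k-i-j)^{-1/2}=O(i^{-1/2})$. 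That mechanism — incrementing the offset one unit at a time and showing each increment costs only $e^{O(1/\sqrt i)}$ — is the idea your proposal is missing; without it (or a genuine substitute), the claimed cancellation up to $e^{c_0\sqrt L}$ is unproven.
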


\begin{proof}[Proof.]
Clearly, $\P[A_1(K)]=\psi(K,K,0)=\frac{(1+1/K)^K}{(K+1)!}$ by (\ref{InPtypicalpath2}). By (\ref{InPeqstirling}),
\begin{equation}
\P\Big[A_L(K)\Big]\leq\frac{e^{2\sqrt{L}}}{K^{3/2}}\frac{e^K}{(K+1)^K},\textrm{ for } L=1.
\end{equation}
The fact $A_1(K)\subset A_2(K)\subset\cdots\subset A_L(K)$ leads to
\begin{equation}\label{InPsumofevents}
\P[A_L(K)]=\sum_{i=1}^{L-1}\P[A_{i+1}(K)\setminus A_i(K)]+\P[A_1(K)],\quad 2\leq L<K.
\end{equation}
Let us estimate $\P[A_{i+1}(K)\setminus A_i(K)]$. Observe that
\begin{equation}\label{InPdifofevents}
\P[A_{i+1}(K)\setminus A_i(K)]=\sum_{k=i+1}^K\P[C_{i,k}(K)],
\end{equation}
where
\begin{equation}
C_{i,k}(K):=\bigg\{%
\begin{array}{ll}
U_1<\cdots< U_K; U_j&\geq \frac{j-i}{K+1},\forall i+1\leq j\leq k-1;\\
& U_k<\frac{k-i}{K+1}; U_{j}\geq\frac{j-i-1}{K+1},\forall k+1\leq j\leq K
 \end{array}
 \bigg\}.
\end{equation}
It follows from the independence of $U_j$'s that $\P[C_{i,k}(K)]=p_{i,k}q_{i,k}$ where
\begin{eqnarray*}
p_{i,k}&:=& \P\bigg(U_1<\cdots< U_k<\frac{k-i}{K+1}; U_j\geq \frac{j-i}{K+1},\forall i+1\leq j\leq k-1\bigg);\\
q_{i,k}&:=& \P\bigg(\frac{k-i}{K+1}\leq U_{k+1}< \cdots < U_K; U_{j}\geq\frac{j-i-1}{K+1},\forall k+1\leq j\leq K\bigg).
\end{eqnarray*}
Then (\ref{InPdifofevents}) becomes that
\begin{equation}\label{InPdifofevents1}
\P[A_{i+1}(K)\setminus A_i(K)]=\sum_{k=i+1}^Kp_{i,k}q_{i,k}.
\end{equation}
We first compute $q_{i,k}$:
\begin{eqnarray*}
q_{i,k}&=&\P\bigg(U_{1}< \cdots < U_{K-k}; U_{j}\geq\frac{j+k-i-1}{K+1},\forall 1\leq j\leq K-k\bigg)\\
&=&\psi(K-k, K-k+i+1, \frac{k-i}{K+1}).
\end{eqnarray*}
By (\ref{InPtypicalpath2}), we obtain that
\begin{equation}\label{InPeqofq}
q_{i,k}=\Big(\frac{K+2+i-k}{K+1}\Big)^{K-k} \frac{i+2}{(K-k)!(K-k+i+2)}.
\end{equation}
It remains to estimate $p_{i,k}$. One sees that
\begin{eqnarray}
p_{i,k}&=&\Big(\frac{k-i}{K+1}\Big)^k\P\bigg(U_1<\cdots< U_k; U_j\geq\frac{j-i}{k-i}, \forall i+1\leq j\leq k-1\bigg)\nonumber\\
&\leq&\Big(\frac{k-i}{K+1}\Big)^k\frac{1}{k-i}\P(D_{i,k-1}),
\end{eqnarray}
where
\begin{equation}
D_{i,k}:=\Big\{U_1<\cdots< U_k, U_j\geq \frac{j-i}{k-i+1},\forall i+1\leq j\leq k\Big\},\  k\geq i\geq1.
\end{equation}

Let us admit for the moment the following lemma, whose proof will be given later.
\begin{lemma}\label{InPineqofu}
For $k\geq i\geq 1$, there exists a constant $c_1>0$ such that
\begin{equation}\label{InPkeyineg}
u_{i,k}:=\P\Big(D_{i,k}\Big)\leq \frac{e^{k-i}e^{c_1\sqrt{i-1}+2}}{(k+1-i)^k k^{3/2}}.
\end{equation}
\end{lemma}

Lemma \ref{InPineqofu} implies that
\begin{eqnarray}\label{InPineqofp}
p_{i,k}&\leq& \Big(\frac{k-i}{K+1}\Big)^k\frac{1}{k-i}u_{i,k-1}\nonumber\\
&\leq&\Big(\frac{e}{K+1}\Big)^k\frac{e^{-i-1}e^{c_1\sqrt{i-1}+2}}{(k-1)^{3/2}}.
\end{eqnarray}
Let us go back to (\ref{InPdifofevents1}). In view of (\ref{InPeqofq}) and (\ref{InPineqofp}), we see that
\begin{multline}
\P[A_{i+1}(K)\setminus A_i(K)]=\sum_{k=i+1}^K p_{i,k}q_{i,k}\\
\leq\sum_{k=i+1}^K\Big(\frac{K+2+i-k}{K+1}\Big)^{K-k} \frac{i+2}{(K-k)!(K-k+i+2)}\Big(\frac{e}{K+1}\Big)^k\frac{e^{-i-1}e^{c_1\sqrt{i-1}+2}}{(k-1)^{3/2}}.
\end{multline}
Applying Stirling's formula (\ref{InPeqstirling}) to $(K-k)!$ yields that
\begin{eqnarray*}
\P[A_{i+1}(K)\setminus A_i(K)]&\leq&\frac{(i+2)e^{c_1\sqrt{i-1}+2}e^K}{(K+1)^K}\sum_{k=i}^{K-1}\frac{e}{2k^{3/2}(K+i+1-k)^{3/2}}\\
&\leq&c_2\frac{\sqrt{i}e^{c_1\sqrt{i-1}+2}}{(K+1)^{3/2}}\frac{e^K}{(K+1)^K}.
\end{eqnarray*}
We then deduce from (\ref{InPsumofevents}) that for $L\geq2$,
\begin{equation}
\P[A_L(K)]=\sum_{i=1}^{L-1}\P[A_{i+1}(K)\setminus A_i(K)]+\P[A_1(K)]\leq c_3\frac{L^{3/2}e^{c_1\sqrt{L-1}+2}}{K^{3/2}}\frac{e^K}{(K+1)^K},
\end{equation}
which is sufficient to conclude Lemma \ref{InPineqofA}.
\end{proof}

We now present the proof of Lemma \ref{InPineqofu}.

\begin{proof}[{Proof of Lemma \ref{InPineqofu}.}]
Recall that $D_{i,k}=\Big\{U_1<\cdots< U_k, U_j\geq \frac{j-i}{k-i+1},\forall i+1\leq j\leq k\Big\}$. Since $D_{i,k}\subset\{U_1<\cdots<U_k\}$, we have for any $k\geq i$,
\begin{equation}
u_{i,k}=\P\Big(D_{i,k}\Big)\leq \P\Big(U_1<\cdots<U_k\Big)=\frac{1}{k!}.
\end{equation}
By Stirling's formula (\ref{InPeqstirling}), we get that
\begin{equation*}
u_{i,k}\leq \frac{e^k}{2 k^k \sqrt{k}}=\frac{e^k}{(k+1-i)^k k^{3/2}}\Big(1-\frac{i-1}{k}\Big)^k\frac{k}{2}\leq \frac{e^{k+1-i}}{(k+1-i)^k k^{3/2}}\frac{k}{2},
\end{equation*}
as $1-z\leq e^{-z}$ for any $z\geq0$. Take $c_1:=\max\{40, \sup_{i\geq 2}\frac{1+\log i}{\sqrt{i-1}}\}<\infty$. Then when $k\leq 2i$, we deduce that
\begin{equation}
u_{i,k}\leq \frac{e^{k-i}}{(k+1-i)^k k^{3/2}}e^{\log i+1}\leq \frac{e^{k-i}e^{c_1\sqrt{i-1}+2}}{(k+1-i)^k k^{3/2}}.
\end{equation}

It remain to prove the inequality (\ref{InPkeyineg}) when $k/2\geq i\geq 1$. Let $\gamma(i):=e^{c_1\sqrt{i-1}+2}$. According to Lemma \ref{InPtypicalpaths}, we have
\begin{equation}
u_{1,k}=\psi(k,k,0)=\frac{(1+\frac{1}{k})^k}{(k+1)!}\leq \frac{e^{k+1}}{2 k^{k+3/2}}\leq \frac{e^{k-1}\gamma(1)}{(k+1-1)^k k^{3/2}},\ \forall k\geq1,
\end{equation}
giving (\ref{InPkeyineg}) in case $i=1$.

We prove (\ref{InPkeyineg}) by induction on $i$. Assume (\ref{InPkeyineg}) for some $i\geq1$ (and all $k\geq i$). We need to bound $\P(D_{i+1,k})$ for $k\geq 2(i+1)$.

Since $D_{1,k}\subset D_{2,k}\subset\cdots D_{k-1,k}$, we have:
\begin{multline}
u_{i+1,k}-u_{i,k}=\P\Big(D_{i+1,k}\setminus D_{i,k}\Big)\\
= \sum_{j=1}^{k-i}\P\bigg(U_1< \cdots< U_k, U_{i+\ell}\geq\frac{1}{k-i+1},\forall 1\leq\ell<j; \frac{j-1}{k-i}\leq U_{i+j}<\frac{j}{k-i+1};\\
\frac{j}{k-i+1}< \frac{j+\ell-1}{k-i}\leq U_{i+j+\ell},\forall 1\leq \ell\leq k-j-i\bigg).
\end{multline}
By the independence of the $U_i$'s, we have $u_{i+1,k}-u_{i,k}=\sum_{j=1}^{k-i}r_{i,j,k}s_{i,j,k}$ where
\begin{eqnarray*}
r_{i,j,k}:&=& \P\bigg(U_1< \cdots< U_{i+j}, U_{i+\ell}\geq\frac{\ell}{k-i+1},\forall 1\leq\ell<j; \frac{j-1}{k-i}\leq U_{i+j}<\frac{j}{k-i+1}\bigg)\\
s_{i,j,k}:&=& \P\bigg(U_{i+j+1}<\cdots< U_k; \frac{j+\ell-1}{k-i}\leq U_{i+j+\ell},\forall 1\leq \ell\leq k-j-i\bigg).
\end{eqnarray*}
Once again by (\ref{InPtypicalpath2}),
\begin{equation}
s_{i,j,k}=\psi(k-i-j,k-i-j,\frac{j}{k-i})=\bigg(\frac{k-i-j+1}{k-i}\bigg)^{k-i-j}\frac{1}{(k-i-j+1)!}.
\end{equation}
On the other hand,
\begin{eqnarray*}
r_{i,j,k}&\leq& \P\bigg(U_1\leq\cdots\leq U_{i+j-1}\leq \frac{j}{k-i+1},U_{i+\ell}\geq\frac{\ell}{k-i+1},\forall 1\leq\ell<j\bigg)\times\bigg[\frac{j}{k-i+1}-\frac{j-1}{k-i}\bigg]\\
&=& \bigg(\frac{j}{k-i+1}\bigg)^{i+j-1}\P\bigg(U_1\leq\cdots \leq U_{i+j-1}, U_{i+\ell}\geq \frac{\ell}{j},\forall 1\leq \ell\leq j-1\bigg)\frac{k-i-j+1}{(k-i)(k-i+1)}\\
&=&\bigg(\frac{j}{k-i+1}\bigg)^{i+j-1}\frac{k-i-j+1}{(k-i)(k-i+1)} u_{i,i+j-1}.
\end{eqnarray*}
This implies that
\begin{multline}
u_{i+1,k}-u_{i,k}=\sum_{j=1}^{k-i}r_{i,j,k}s_{i,j,k}\\
\leq\sum_{j=1}^{k-i}\bigg(\frac{k-i-j+1}{k-i}\bigg)^{k-i-j}\frac{1}{(k-i-j+1)!}\bigg(\frac{j}{k-i+1}\bigg)^{i+j-1}\frac{k-i-j+1}{(k-i)(k-i+1)} u_{i,i+j-1}.
\end{multline}

By induction assumption, for any $\ell\geq i\geq1$, $u_{i,\ell} \leq \frac{e^{\ell-i}\gamma(i)}{(\ell+1-i)^\ell \ell^{3/2}}$. It follows that
\begin{multline*}
u_{i+1,k}\leq \frac{e^{k-i}\gamma(i)}{(k+1-i)^k k^{3/2}}+\sum_{j=1}^{k-i}\bigg(\frac{j}{k-i+1}\bigg)^{i+j-1}\frac{k-i-j+1}{(k-i)(k-i+1)}\\
\times\frac{e^{j-1}\gamma(i)}{j^{i+j-1}(i+j-1)^{3/2}}\bigg(\frac{k-i-j+1}{k-i}\bigg)^{k-i-j}\frac{1}{(k-i-j+1)!}.
\end{multline*}
The first term on the right-hand side of this inequality is bounded by
\begin{equation}
\frac{e^{k-i}\gamma(i)}{(k-i)^k k^{3/2}}\Big(\frac{k-i}{k+1-i}\Big)^{k+1-i}\leq \frac{e^{k-i-1}\gamma(i)}{(k-i)^k k^{3/2}},
\end{equation}
whereas the second term bounded by
\begin{eqnarray*}
&&\sum_{j=1}^{k-i}\bigg(\frac{1}{k-i}\bigg)^{k+1}\frac{e^{j-1}\gamma(i)}{(i+j-1)^{3/2}}\bigg(k-i-j+1\bigg)^{k-i-j+1}\frac{1}{(k-i-j+1)!}\\
&\leq&\frac{e^{k-i}\gamma(i)}{(k-i)^{k+1}}\sum_{j=1}^{k-i}\frac{1}{2(i+j-1)^{3/2}(k-i-j+1)^{1/2}}\\
&\leq& \frac{20\gamma(i)}{\sqrt{i}}\frac{e^{k-i-1}}{(k-i)^{k}k^{3/2}},
\end{eqnarray*}
where the last inequality holds as we take $k/2\geq i+1$. We obtain that
\begin{equation}
u_{i+1,k}\leq \frac{e^{k-i-1}\gamma(i)}{(k-i)^k k^{3/2}}\Big(1+\frac{20}{\sqrt{i}}\Big)\leq \frac{e^{k-i-1}}{(k-i)^k k^{3/2}} \gamma(i)e^{\frac{20}{\sqrt{i}}}.
\end{equation}
Note that $\gamma(i)e^{\frac{20}{\sqrt{i}}}=\exp\{c_1\sqrt{i-1}+2+\frac{20}{\sqrt{i}}\}\leq \gamma(i+1)$ if we take $c_1>40$. Therefore,
\begin{equation}
u_{i+1,k}\leq \frac{e^{k-i-1}\gamma(i+1)}{(k-i)^k k^{3/2}},\  \forall k\geq i+1,
\end{equation}
which completes the proof of Lemma \ref{InPineqofu}.
\end{proof}

\section{Asymptotic behaviors of $Z_{N,\alpha N}$: Proof of Theorem \ref{InPlimitcurve}}
\label{ss:InPlimitcurve}

In this section, we prove Theorem \ref{InPlimitcurve}, by estimating the first and second moments of the accessible population. However, we do not consider directly $Z_{N,\alpha N}$ even though its second moment for $\alpha<2$ is obtained in Lemma \ref{InPsecondmoment}. In fact, we mainly count some typical increasing paths. 

For any $\varepsilon\in(0,1)$ and any $k\geq1$, let $\mathcal{A}_{N,k,\varepsilon}:=\{\sigma\in T^{(N)}: |\sigma|=k, x_{\sigma_1}<\cdots<x_\sigma; x_{\sigma_i}\geq \varepsilon+(1-\varepsilon)\frac{i-1}{k}, \forall 1\leq i\leq k\}$. We define the following quantities:
\begin{equation}\label{InPbar}
Z_{N,k,\varepsilon}:=\sum_{|\sigma|=k}1_{(\sigma\in\mathcal{A}_{N,k,\varepsilon})},\quad\forall k\geq1.
\end{equation}
Clearly, under $\P_0$, $Z_{N,k,\varepsilon}\leq Z_{N,k}=\#\mathcal{A}_{N,k}$. Instead of $Z_{N,k}$, we study $Z_{N,k,\varepsilon}$ with suitable $\varepsilon\geq0$.

\begin{proof}[\noindent\textit{Proof of (i) of Theorem \ref{InPlimitcurve}.}]
We need to show that for $\alpha\in(0,e)$, 
$$\P_0-a.s., \lim_{N\rightarrow\infty}\log Z_{N,\alpha N}/N=\theta(1-\alpha),$$
with $\theta(\alpha)=\alpha(1-\log \alpha)$. We first give the upper bound.
By (\ref{eq:1mom}),
\begin{equation}
\E_0\Big[Z_{N,\alpha N}\Big]\leq \frac{(e/\alpha)^{\alpha N}}{2\sqrt{\alpha N}}=\frac{e^{\theta(\alpha)N}}{2\sqrt{\alpha N}}.
\end{equation}
By Markov's inequality, for any $\delta>0$,
\begin{equation}
\P_0\Big[Z_{N,\alpha N}\geq \exp\{N(\theta(\alpha)+\delta)\}\Big]\leq \exp\{-N(\theta(\alpha)+\delta)\}\E_0\Big[Z_{N,\alpha N}\Big]\leq \frac{e^{-\delta N}}{2\sqrt{\alpha N}},
\end{equation}
which is summable in $N$. By the Borel-Cantelli lemma, for any $\delta>0$, $\P_0$-almost surely,
\begin{equation}
\limsup_{N\rightarrow\infty}\frac{\log Z_{N,\alpha N}}{N}\leq \theta(\alpha)+\delta.
\end{equation}
This establishes the upper bound. To obtain the lower bound, it suffices to show that for any $\delta>0$, there exists some $\varepsilon>0$ such that $\P_0$-almost surely,
\begin{equation}
\liminf_{N\rightarrow\infty}\frac{\log Z_{N,\alpha N}}{N}\geq\theta(\alpha)-\delta.
\end{equation}

By (\ref{InPtypicalpath2}), we see that for any $k\geq1$ and any $\varepsilon\in(0,1)$,
\begin{equation}\label{InPfirstmom}
\E_0\Big[Z_{N,k,\varepsilon}\Big]=N^{k}\psi(k,k,\varepsilon)= N^{k}\frac{(1+1/k)^k}{(k+1)!}(1-\varepsilon)^k.
\end{equation}
Here we take $k=\alpha N-1$ with $\alpha<e$. For any $\alpha<e$ fixed, take $\varepsilon$ small enough so that $\alpha <e(1-\varepsilon)$, $\theta(\alpha)>3\alpha\varepsilon$ and $\log (1-\varepsilon)>-2\varepsilon$. By Stirling's formula (\ref{InPeqstirling}),
\begin{equation}
\E_0\Big[Z_{N,\alpha N-1,\varepsilon}\Big]\geq c_4\frac{\exp\Big\{\theta(\alpha)N+\alpha\log (1-\varepsilon) N\Big\}}{(\alpha N)^{3/2}}.
\end{equation}
For all $N$ sufficiently large, we get that
\begin{equation}\label{eq:lowerbound1mom}
\E_0\Big[Z_{N,\alpha N-1,\varepsilon}\Big]\geq 2\exp\{\theta(\alpha )N-3\alpha \varepsilon N\}\geq1.
\end{equation}
By the Paley-Zygmund inequality,
\begin{equation}\label{InPlowerbound}
\P_0\Big[Z_{N,\alpha N-1,\varepsilon}\geq\exp\{\theta(\alpha )N-3\alpha \varepsilon N\}\Big]\geq \frac{\E_0\Big[Z_{N,\alpha N,\varepsilon}\Big]^2}{4\E_0\Big[Z^2_{N,\alpha N,\varepsilon}\Big]}.
\end{equation}

Let us bound $\E_0\Big[Z^2_{N,\alpha N,\varepsilon}\Big]$, which is equal to:
\begin{equation}\label{InPnewsecondmom}
\begin{split}
&\E_0\Big[\sum_{|\sigma|=|\sigma^\prime|=k}1_{(\sigma,\sigma^\prime\in\mathcal{A}_{N,k,\varepsilon})}\Big]=\E_0\Big[Z_{N, k,\varepsilon}\Big]+\E_0\Big[\sum_{q=0}^{k-1}\sum_{|\sigma\wedge\sigma^\prime|=q}1_{(\sigma,\sigma^\prime\in\mathcal{A}_{N,k,\varepsilon})}\Big]\\
=&\E_0\Big[Z_{N, k,\varepsilon}\Big]+\sum_{q=0}^{k-1}N^q N(N-1)N^{2k-2q-2}\P_0\Big(\sigma,\sigma^\prime\in\mathcal{A}_{N,k,\varepsilon}\Big\vert |\sigma\wedge\sigma^\prime|=q\Big),
\end{split}
\end{equation}
where $\sigma\wedge\sigma^\prime$ denotes the latest common ancestor of $\sigma$ and $\sigma^\prime$.

Recall that $\mathcal{A}_{N,k,\varepsilon}=\{\sigma\in\mathcal{A}_{N,k}; x_{\sigma_i}\geq \varepsilon+(1-\varepsilon)\frac{i-1}{k}, \forall 1\leq i\leq k\}$. $\P_0\Big(\sigma,\sigma^\prime\in\mathcal{A}_{N,k,\varepsilon}\Big\vert |\sigma\wedge\sigma^\prime|=q\Big)$ is hence equal to
\begin{equation}\label{InPinterprobab}
\begin{split}
&\int_{\varepsilon+(1-\varepsilon)(q-1)/k}^1\P_0\Big(\sigma,\sigma^\prime\in\mathcal{A}_{N,k,\varepsilon}\Big\vert |\sigma\wedge\sigma^\prime|=q, x_{\sigma\wedge\sigma^\prime}=y\Big)dy\\
=&\int_{\varepsilon+(1-\varepsilon)(q-1)/k}^1\P\Big(U_1<\cdots<U_{q-1}<y; U_i\geq\varepsilon+(1-\varepsilon)\frac{i-1}{k},\forall 1\leq i<q\Big)\\
&\quad\times\Big[\P\Big(y<U_{q+1}<\cdots<U_k; U_i\geq\varepsilon+(1-\varepsilon)\frac{i-1}{k},\forall q<i\leq k\Big)\Big]^2dy.
\end{split}
\end{equation}
Observe that
\begin{equation*}
\begin{split}
&\P\Big(y<U_{q+1}<\cdots<U_k; U_i\geq\varepsilon+(1-\varepsilon)\frac{i-1}{k},\forall q<i\leq k\Big)\\
\leq&\P\Big(U_{q+1}<\cdots<U_k; U_i\geq\varepsilon+(1-\varepsilon)\frac{i-1}{k},\forall q<i\leq k\Big)=\psi\Big(k-q, k-q, \varepsilon+(1-\varepsilon)\frac{q}{k}\Big).
\end{split}
\end{equation*}
Plugging it into (\ref{InPinterprobab}) implies that $\P_\varepsilon\Big(\sigma,\sigma^\prime\in\mathcal{A}_{N,k,\varepsilon}\Big\vert |\sigma\wedge\sigma^\prime|=q\Big)$ is less than
\begin{equation}\label{InPtwobranches}
\begin{split}
&\bigg\{\int_{\varepsilon+(1-\varepsilon)(q-1)/k}^1dy\P\Big(U_1<\cdots<U_{q-1}<y; U_i\geq\varepsilon+(1-\varepsilon)\frac{i-1}{k},\forall 1\leq i<q\Big)\times\\
&\P\Big(y<U_{q+1}<\cdots<U_k; U_i\geq\varepsilon+(1-\varepsilon)\frac{i-1}{k},\forall q<i\leq k\Big) \bigg\}\times \psi\Big(k-q, k-q, \varepsilon+(1-\varepsilon)\frac{q}{k}\Big)\\
&=\psi(k,k,\varepsilon)\times\psi\Big(k-q, k-q, \varepsilon+(1-\varepsilon)\frac{q}{k}\Big).
\end{split}
\end{equation}
Combining (\ref{InPnewsecondmom}) with (\ref{InPtwobranches}) yields that
\begin{equation}\label{InPsecondmomupp}
\begin{split}
\E_0\Big[Z_{N,k,\varepsilon}^2\Big]\leq&\E_0\Big[Z_{N, k,\varepsilon}\Big]+\frac{N-1}{N}\sum_{q=0}^{k-1}N^{2k-q}\psi(k,k,\varepsilon)\times\psi\Big(k-q, k-q, \varepsilon+(1-\varepsilon)\frac{q}{k}\Big)\\
=&\E_0\Big[Z_{N, k,\varepsilon}\Big]\Big(1+\frac{N-1}{N}\E_0\Big[Z_{N, k,\varepsilon}\Big]\sum_{q=0}^{k-1}N^{-q}\frac{\psi(k-q,k-q,\varepsilon+(1-\varepsilon)\frac{q}{k})}{\psi(k,k,\varepsilon)}\Big),
\end{split}
\end{equation}
where the last equality follows from (\ref{InPfirstmom}).
By (\ref{InPtypicalpath2}) and (\ref{InPeqstirling}),
\begin{equation}
\sum_{q=0}^{k-1}N^{-q}\frac{\psi(k-q,k-q,\varepsilon+(1-\varepsilon)\frac{q}{k})}{\psi(k,k,\varepsilon)}\leq\sum_{q=0}^{k-1}c_{5}\Big(\frac{k}{k-q}\Big)^{3/2}\Big(\frac{k}{e(1-\varepsilon)N}\Big)^{q}.
\end{equation}
For $k=\alpha N-1$ and $\alpha<e(1-\varepsilon)$, we get that for $N$ large enough,
\begin{equation*}
\sum_{q=0}^{k-1}c_{5}\Big(\frac{k}{k-q}\Big)^{3/2}\Big(\frac{k}{e(1-\varepsilon)N}\Big)^{q}\leq \sum_{q=0}^{k/2} c_6 \Big(\frac{\alpha}{e(1-\varepsilon)}\Big)^{q}+\sum_{q\geq k/2}c_6 q^{3/2}\Big(\frac{\alpha}{e(1-\varepsilon)}\Big)^{q}\leq c_7<\infty.
\end{equation*}
By (\ref{eq:lowerbound1mom}), for $N$ large enough, $\E_0\Big[Z_{N, k,\varepsilon}\Big]\geq1$. Going back to (\ref{InPsecondmomupp}), we obtain that for all $N$ sufficiently large,
\begin{equation}
\E_0\Big[Z_{N,\alpha N-1,\varepsilon}^2\Big]\leq(1+c_7)\E_0\Big[Z_{N, \alpha N-1,\varepsilon}\Big]^2.
\end{equation}
It then follows from (\ref{InPlowerbound}) that
\begin{equation}\label{InPmedialowerbound}
\P_0\Big[Z_{N,\alpha N-1,\varepsilon}\geq\exp\{\theta(\alpha )N-3\alpha \varepsilon N\}\Big]\geq \frac{1}{4(1+c_7)}=:c_8\in(0,1).
\end{equation}

For any vertex $\omega$ in the first generation, define $\mathcal{A}_{N,k+1,\varepsilon}(\omega)$ as follows:
\begin{equation*}
\mathcal{A}_{N,k+1,\varepsilon}(\omega):=\{|\sigma|=k+1; \sigma_1=\omega; x_{\sigma_2}<\cdots<x_{\sigma}; x_{\sigma_i}\geq\varepsilon+(1-\varepsilon)\frac{i-2}{k}, 2\leq i\leq k+1\}.
\end{equation*}
To bound $\P_0\{Z_{N,\alpha N}<\exp\{\theta(\alpha )N-3\alpha \varepsilon N\}\}$, we observe that
\begin{equation}
Z_{N,\alpha N}\geq \sum_{|\omega|=1}1_{(x_\omega<\varepsilon)}\sum_{|\sigma|=\alpha N}1_{(\sigma\in\mathcal{A}_{N,\alpha N,\varepsilon}(\omega))},
\end{equation}
where $\Big(x_\omega,\ \sum_{|\sigma|=\alpha N}1_{(\sigma\in\mathcal{A}_{N,\alpha N,\varepsilon}(\omega))}\Big)$ are i.i.d. Thus,
\begin{eqnarray}\label{InPupperboundprob}
&&\P_0\Big(Z_{N,\alpha N}<\exp\{\theta(\alpha )N-3\alpha \varepsilon N\}\Big)\nonumber\\
&\leq &\P_0\Big(\sum_{|\omega|=1}1_{(x_\omega<\varepsilon)}\sum_{|\sigma|=\alpha N}1_{(\sigma\in\mathcal{A}_{N,\alpha N,\varepsilon}(\omega))}<\exp\{\theta(\alpha )N-3\alpha \varepsilon N\}\Big)\nonumber\\
&\leq&\P_0\Big(1_{(x_\omega<\varepsilon)}\sum_{|\sigma|=\alpha N}1_{(\sigma\in\mathcal{A}_{N,\alpha N,\varepsilon}(\omega))}<\exp\{\theta(\alpha )N-3\alpha \varepsilon N\}\Big)^N.
\end{eqnarray}
The fact that $\P_0[\sigma\in\mathcal{A}_{N,k+1,\varepsilon}(\omega)\vert x_\omega<\varepsilon]=\P_0[\sigma\in\mathcal{A}_{N,k,\varepsilon}]$ implies that given $\{x_\omega<\varepsilon\}$, $\sum_{|\sigma|=\alpha N}1_{(\sigma\in\mathcal{A}_{N,\alpha N,\varepsilon}(\omega))}$ is distributed as $Z_{N,\alpha N-1,\varepsilon}$ under $\P_0$. Therefore, we have
\begin{eqnarray*}
&&\P_0\Big(1_{(x_\omega<\varepsilon)}\sum_{|\sigma|=\alpha N}1_{(\sigma\in\mathcal{A}_{N,\alpha N,\varepsilon}(\omega))}<\exp\{\theta(\alpha )N-3\alpha \varepsilon N\}\Big)\\
&\leq& 1-\varepsilon+\varepsilon\P_0\bigg(\sum_{|\sigma|=\alpha N}1_{(\sigma\in\mathcal{A}_{N,\alpha N,\varepsilon}(\omega))}<\exp\{\theta(\alpha )N-3\alpha \varepsilon N\}\Big\vert x_\omega<\varepsilon\bigg)\\
&=&1-\varepsilon+\varepsilon\Big(1-\P_0\Big[Z_{N,\alpha N-1,\varepsilon}\geq\exp\{\theta(\alpha )N-3\alpha \varepsilon N\}\Big]\Big),
\end{eqnarray*}
which is bounded by $1-\varepsilon+\varepsilon(1-c_8)$ because of (\ref{InPmedialowerbound}). Plugging this inequality into (\ref{InPupperboundprob}) yields that
\begin{eqnarray}
\P_0\Big[Z_{N,\alpha N}<\exp\{\theta(\alpha )N-3\alpha \varepsilon N\}\Big]&\leq&\Big(1-\varepsilon+\varepsilon(1-c_8)\Big)^N \nonumber\leq e^{-Nc_8\varepsilon},
\end{eqnarray}
which is summable in $N$. By the Borel-Cantelli lemma, we conclude that for $\varepsilon$ sufficiently small, $\P_0-$almost surely,
\begin{equation}
\liminf_{N\rightarrow\infty}\frac{\log Z_{N,\alpha N}}{N}\geq\theta(\alpha)-3\alpha\varepsilon,
\end{equation}
completing the proof of (i) of Theorem \ref{InPlimitcurve}.

\bigskip
Before the proof of Part (ii), we turn to estimate $\P_0\Big[Z_{N,\alpha N}\geq1\Big]$ with $\alpha>e$.

\noindent\textit{Proof of (iii) of Theorem \ref{InPlimitcurve}.}
The upper bound is easy. By Markov's inequality and (\ref{eq:1mom}),
\begin{equation*}
\P_0\Big[Z_{N,\alpha N}\geq1\Big]\leq\E_0\Big[Z_{N,\alpha N}\Big]\leq \frac{e^{\theta(\alpha)N}}{2\sqrt{\alpha N}}.
\end{equation*}
It follows that
\begin{equation}\label{InPsubcriticalupper}
\limsup_{N\rightarrow\infty}\frac{\log \P_0\Big[Z_{N,\alpha N}\geq1\Big]}{N}\leq \theta(\alpha)<0.
\end{equation}
To get the lower bound, we use the fact that $Z_{N,k}\geq Z_{N,k,\varepsilon}$ and the Paley-Zygmund inequality to get that for any $\varepsilon\in[0,1)$,
\begin{equation}\label{InPPaley-Zygmund}
\P_0\Big[Z_{N,k}\geq1\Big]\geq\P_0\Big[Z_{N,k,\varepsilon}\geq1\Big]\geq\frac{\E_0\Big[Z_{N,k,\varepsilon}\Big]^2}{\E_0\Big[Z_{N,k,\varepsilon}^2\Big]}.
\end{equation}
In this part, we always take $\varepsilon=0$. Applying (\ref{InPfirstmom}) and Stirling's formula (\ref{InPeqstirling}) gives that for $k=\alpha N$,
\begin{equation}\label{InPfirstmomlower}
\E_0\Big[Z_{N, k,0}\Big]\geq \frac{N^{k}}{(k+1)!}\geq \frac{e^{\theta(\alpha )N}}{3(\alpha N+1)^{3/2}}.
\end{equation}

On the other hand, in view of (\ref{InPsecondmomupp}), we obtain that
\begin{equation}\label{InPsecondmomupp0}
\E_0\Big[Z_{N,k,0}^2\Big]\leq\E_0\Big[Z_{N, k,0}\Big]\Big(1+\frac{N-1}{N}\sum_{q=0}^{k-1}N^{k-q}\psi(k-q,k-q,\frac{q}{k})\Big).
\end{equation}
By (\ref{InPtypicalpath2}) and (\ref{InPeqstirling}), one sees that for $k=\alpha N$ with $\alpha >e$,
\begin{eqnarray}
\sum_{q=0}^{k-1}N^{k-q}\psi(k-q,k-q,\frac{q}{k})&=&\sum_{q=0}^{k-1}N^{k-q}\frac{(1+1/(k-q))^{k-q}}{(k-q+1)!}(1-q/k)^{k-q}\nonumber\\
&\leq&\sum_{q=0}^{k-1}\frac{e}{2(k-q)^{3/2}}\Big(\frac{e N}{k}\Big)^{k-q}\leq \sum_{q=0}^{k-1}\frac{e}{2(k-q)^{3/2}},\nonumber
\end{eqnarray}
Let $c_9:=\sum_{q=1}^\infty\frac{e}{2q^{3/2}}\in(0,\infty)$. It follows that
\begin{equation}
\sum_{q=0}^{k-1}N^{k-q}\psi(k-q,k-q,\frac{q}{k})\leq c_9.
\end{equation}
Plugging it into (\ref{InPsecondmomupp0}) shows that
\begin{equation}\label{InPsecondmomupp1}
\E_0\Big[Z_{N,\alpha N,0}^2\Big]\leq\E_0\Big[Z_{N, \alpha N,0}\Big](1+c_9).
\end{equation}
According to (\ref{InPPaley-Zygmund}) and (\ref{InPfirstmomlower}), we obtain that
\begin{equation}\label{InPlowerbdofsurvie}
\P_0\Big[Z_{N,\alpha N}\geq1\Big]\geq\frac{\E_0\Big[Z_{N, \alpha N,0}\Big]}{1+c_9}\geq c_{10}\frac{e^{\theta(\alpha )N}}{(\alpha N+1)^{3/2}},
\end{equation}
where $c_{10}:=\frac{1}{3(1+c_9)}$. Therefore, we conclude that for $\alpha>e$,
\begin{equation}
\liminf_{N\rightarrow\infty}\frac{\log \P_0\Big[Z_{N,\alpha N}\geq1\Big]}{N}\geq \theta(\alpha),
\end{equation}
which completes the proof of (iii) of Theorem \ref{InPlimitcurve}.

\bigskip

\noindent\textit{Proof of (ii) of Theorem \ref{InPlimitcurve}}. Let us estimate $\P_0[Z_{N,e N}\geq1]$.

For the lower bound, one observes that the inequality (\ref{InPlowerbdofsurvie}) still holds when $\alpha=e$. As $\theta(e)=0$, we get that
\begin{equation}
\P_0[Z_{N,e N}\geq1]\geq c_{11}N^{-3/2}.
\end{equation}

To obtain the upper bound, we introduce the following collections of accessible vertices in $T^{(N)}$:
\begin{equation}
\mathcal{A}_L(K):=\{|\sigma|=K: x_{\sigma_1}< \cdots< x_{\sigma}; x_{\sigma_{j}}\geq\frac{(j-L)_+}{K+1}; \forall 1\leq j\leq K\},\quad 0\leq L<K.
\end{equation}

Set $K=e N$ and $L_0=2\log K$. One observes that
\begin{equation}
\mathcal{A}_{N,K}\subset\mathcal{A}_{L_0}(K)\cup\bigcup_{k=L_0+1}^K\{\exists |\sigma|=k:x_{\sigma_1}< \cdots< x_{\sigma}, x_\sigma<\frac{k-L_0}{K+1}\}.
\end{equation}
As a consequence,
\begin{eqnarray}\label{InPtypicalsurvival}
\P_0\Big[Z_{N,e N}\geq1\Big]&\leq& \P_0\Big[\exists \sigma\in\mathcal{A}_{L_0}(K)\Big]+\sum_{k=L_0+1}^K\P_0\Big[\exists |\sigma|=k:x_{\sigma_1}< \cdots< x_{\sigma}, x_\sigma<\frac{k-L_0}{K+1} \Big]\nonumber\\
&\leq&\E_0\Big[\sum_{|\sigma|=K}1_{(\sigma\in \mathcal{A}_{L_0}(K))}\Big]+\sum_{k=L_0+1}^K \E_0\Big[\sum_{\sigma\in\mathcal{A}_{N,k}}1_{(x_\sigma<\frac{k-L_0}{K+1})}\Big],
\end{eqnarray}
where the last inequality follows from Markov's inequality. We first compute the second term on the right-hand side of (\ref{InPtypicalsurvival}), which is
\begin{eqnarray}
\sum_{k=L_0+1}^K \E_0\Big[\sum_{\sigma\in\mathcal{A}_{N,k}}1_{(x_\sigma<\frac{k-L_0}{K+1})}\Big]&=&\sum_{k=L_0+1}^K N^k\P\Big[U_1<\cdots<U_k<\frac{k-L_0}{K+1}\Big]\nonumber\\
&=&\sum_{k=L_0+1}^K N^k\Big(\frac{k-L_0}{K+1}\Big)^k\frac{1}{k!}.
\end{eqnarray}
By (\ref{InPeqstirling}),
\begin{equation}
\sum_{k=L_0+1}^K \E_0\Big[\sum_{\sigma\in\mathcal{A}_{N,k}}1_{(x_\sigma<\frac{k-L_0}{K+1})}\Big]\leq\sum_{k=L_0+1}^K \Big(\frac{e N}{K+1}\Big)^k\frac{e^{-L_0}}{2\sqrt{k}}\leq c_{12}N^{-3/2}.
\end{equation}
The inequality (\ref{InPtypicalsurvival}) thus becomes that
\begin{eqnarray}
\P_0\Big[Z_{N,e N}\geq1\Big]&\leq& \E_0\Big[\sum_{|\sigma|=K}1_{(\sigma\in \mathcal{A}_{L_0}(K))}\Big]+c_{12}N^{-3/2}\nonumber\\
&=& N^K \P[A_{L_0}(K)]+c_{12}N^{-3/2},
\end{eqnarray}
where $A_{L_0}(K)$ is defined in (\ref{InPdefofA}). Applying Lemma \ref{InPineqofA} yields that
\begin{eqnarray}
\P_0\Big[Z_{N,e N}\geq1\Big]&\leq& N^K\frac{e^{c_0\sqrt{L_0}}}{K^{3/2}}\frac{e^K}{(K+1)^K}+c_{12}N^{-3/2}\nonumber\\
&\leq & c_{13}\frac{e^{c_0\sqrt{2\log K}}}{N^{3/2}}=N^{-3/2+o_N(1)},
\end{eqnarray}
which completes the proof of (ii) of Theorem \ref{InPlimitcurve}.
\end{proof}

\section{The criticality at $eN-\frac{3}{2}\log N$}
\label{ss:critical}
In this section, we prove Theorem \ref{thm:critical} and Proposition \ref{prop:critical}, which says that
\begin{equation}
\lim_{N\rightarrow\infty}\P_0\Big(Z_{N,eN-\beta \log N}\geq1\Big)=\left\{\begin{array}{cl}
1 & \textrm{if }\beta >3/2;\\
0 & \textrm{if }\beta <3/2.
\end{array}\right.
\end{equation}
and that when $\beta=3/2$, for any $\varepsilon>0$ and $N$ sufficiently large,
\begin{equation}
\P_0\Big(Z_{N,eN-\beta \log N}\geq1\Big)\geq N^{-\varepsilon}.
\end{equation}

\subsection{Extinction after $eN-\beta\log N$ for any $\beta<3/2$}
Let $K=eN-\beta\log N$ with $\beta<3/2$ fixed. Similarly as (\ref{InPtypicalsurvival}), one sees that
\begin{equation}
\label{eq:ext}
\begin{split}
\P_0\Big[Z_{N,K}\geq1\Big]\leq& \P_0\Big[\exists \sigma\in\mathcal{A}_{L_0}(K)\Big]+\sum_{k=L_0+1}^K\P_0\Big[\exists |\sigma|=k:x_{\sigma_1}< \cdots< x_{\sigma}, x_\sigma<\frac{k-L_0}{K+1} \Big]\\
\leq&\E_0\Big[\sum_{|\sigma|=K}1_{(\sigma\in \mathcal{A}_{L_0}(K))}\Big]+\sum_{k=L_0+1}^K \E_0\Big[\sum_{\sigma\in\mathcal{A}_{N,k}}1_{(x_\sigma<\frac{k-L_0}{K+1})}\Big]\\
\leq&N^K \P[A_{L_0}(K)]+\sum_{k=L_0+1}^K \Big(\frac{e N}{K+1}\Big)^k\frac{e^{-L_0}}{2\sqrt{k}}.
\end{split}
\end{equation}
We take $L_0=2\log N$. Note that for $1\leq k\leq K$, 
\begin{equation}
\Big(\frac{e N}{K+1}\Big)^k\frac{e^{-L_0}}{2\sqrt{k}}\leq \Big(1+\frac{\beta\log N}{K}\Big)^K\frac{N^{-2}}{2\sqrt{k}}\leq N^{\beta-2}\frac{1}{2\sqrt{k}}.
\end{equation}
So, the second sum on the right-hand side of (\ref{eq:ext}) is less than
\begin{equation}
\sum_{k=L_0+1}^KN^{\beta-2}\frac{1}{2\sqrt{k}}\leq c_{14}N^{\beta-3/2},
\end{equation}
which converges to zero if $\beta<3/2$.

Applying Lemma \ref{InPineqofA} for $A_{L_0}(K)$ yields that
\begin{equation}
N^K \P[A_{L_0}(K)]\leq \frac{e^{c_0\sqrt{L_0}}}{K^{3/2}}\Big(\frac{eN}{K+1}\Big)^K\leq \frac{e^{c_0\sqrt{L_0}}}{K^{3/2}}\Big(1+\frac{\beta\log N}{K}\Big)^K\leq c_{15}N^{\beta-3/2}e^{c_0\sqrt{L_0}},
\end{equation}
which also converges to zero as $N\rightarrow\infty$. 

Consequently, when $\beta<3/2$,
\begin{equation}
\lim_{N\rightarrow\infty}\P_0\Big(Z_{N,eN-\beta \log N}\geq1\Big)=0.
\end{equation}

\subsection{Survival until $eN-\beta\log N$ for any $\beta>3/2$}
It remains to show that $\lim_{N\rightarrow\infty}\P_0\Big(Z_{N,eN-\beta \log N}\geq1\Big)=1$ when $\beta>3/2$.

Let $k_0:=\gamma\log N$, $K_0:=eN-(\beta+\gamma)\log N$ and $\delta_N:=\frac{(\gamma +3/2)\log N}{eN}$. We define $\widetilde{\mathcal{A}}(k_0,\delta_N,K_0)$ to be the collection of accessible individuals satisfying that
\begin{equation}
\begin{split}
&0<x_{\sigma_1}<\cdots<x_{\sigma_{k_0}}\leq \delta_N;\\
&\delta_N<x_{\sigma_{k_0+1}}<\cdots<x_{\sigma_{K_0+k_0}}=x_{\sigma}\leq 1
\textrm{ and } x_{\sigma_{k_0+j}}\geq \delta_N+\Big(1-\delta_N\Big)\frac{j-1}{K_0}.
\end{split}
\end{equation}
Clearly, $\P_0\Big(Z_{N,eN-\beta \log N}\geq1\Big)\geq\P_0\Big(\#\widetilde{\mathcal{A}}(k_0,\delta_N,K_0)\geq1\Big)$. Recalling the definitions of $Z_{N, K_0,\delta_N}$ and $Z_{N,k_0}(\delta_N)$ in (\ref{InPbar}) and (\ref{InPpartialpopulation}) respectively, one observes that 
\begin{equation}
\P_0\Big(Z_{N, eN-\beta \log N}=0\Big)\leq \E\Bigg(\bigg\{1-\P_0\Big(Z_{N, K_0,\delta_N}\geq1\Big)\bigg\}^{Z_{N,k_0}(\delta_N)}\Bigg).
\end{equation}
We first give a lower bound for the survival probability $\P_0\Big(Z_{N, K,\delta}\geq1\Big)$. It follows from the Paley-Zygmund inequality that
\begin{equation}
\label{eq:paley-zyg}
\P_0\Big(Z_{N, K,\delta}\geq1\Big)\geq \frac{\E_0\Big(Z_{N,K,\delta}\Big)^2}{\E_0\Big(Z^2_{N,K,\delta}\Big)},
\end{equation}
where the first moment of $Z_{N,K,\delta}$ is as follows:
\begin{equation}
\E_0\Big(Z_{N,K,\delta}\Big)=N^K\psi(K,K,\delta)=N^{K}\frac{(1+1/K)^K}{(K+1)!}(1-\delta)^K.
\end{equation}
By (\ref{InPsecondmomupp}) again
\begin{equation}
\begin{split}
\E_0\Big[Z_{N,K,\delta}^2\Big]
&\leq\E_0\Big[Z_{N, K,\delta}\Big]\Big\{1+\E_0\Big[Z_{N, K,\delta}\Big] \sum_{q=0}^{K-1}c_{5}\Big(\frac{K}{K-q}\Big)^{3/2}\Big(\frac{K}{e(1-\delta)N}\Big)^{q}\Big\}.
\end{split}
\end{equation}
Here we take $\delta=\delta_N$ and $K=K_0$. On the one hand,
\begin{equation}
c_{16}N^{-3/2}\leq \E_0\Big(Z_{N,K_0,\delta_N}\Big)\leq c_{17} N^{-3/2}.
\end{equation}
On the other hand, as $\sum_{q=0}^{K_0-1}c_{5}\Big(\frac{K_0}{K_0-q}\Big)^{3/2}\Big(\frac{K_0}{e(1-\delta_N)N}\Big)^{q}\leq c_{18}N^{3/2}$,
\begin{equation}
\E_0\Big[Z_{N,K_0,\delta_N}^2\Big]\leq c_{19}\E_0\Big[Z_{N, K_0,\delta_N}\Big] .
\end{equation}
As a consequence,
\begin{equation}\label{eq:lowerbdofsur}
\P_0\Big(Z_{N, K_0,\delta_N}\geq1\Big)\geq c_{20} \E_0\Big[Z_{N, K_0,\delta_N}\Big]\geq c_{21} N^{-3/2}.
\end{equation}
We deduce that 
\begin{equation}\label{eq:exatbeta}
\P_0\Big(Z_{N, eN-\beta \log N}=0\Big)\leq \E\Bigg(\bigg\{1-c_{21} N^{-3/2}\bigg\}^{Z_{N,k_0}(\delta_N)}\Bigg).
\end{equation}

We are going to prove that with high probability $Z_{N,k_0}(\delta_N)\gg N^{3/2}$. Take $\varepsilon>0$ sufficiently small so that $\beta-3\varepsilon>3/2$. Let $\beta^\prime=\beta-\varepsilon>0$, $\varepsilon_N:=\frac{\varepsilon\log N}{eN}$ and $\delta^\prime_N:=\frac{(\beta^\prime+\gamma)\log N}{eN}$. It immediately follows that
\begin{equation*}
\begin{split}
&Z_{N,k_0}(\delta_N)=\sum_{|\sigma|=k_0}1_{\{0<x_{\sigma_1}<\cdots<x_{\sigma_{k_0}}\leq \delta_N\}}\\
\geq& \#\left\{|\sigma|=k_0: 0<x_{\sigma_1}\leq\varepsilon_N<x_{\sigma_2}<\cdots<x_{\sigma_{k_0}}\leq \varepsilon_N+\delta^\prime_N; x_{\sigma_j}\geq \varepsilon_N+\frac{j-2}{k_0-1}\delta^\prime_N,\forall j\geq2\right\}\\
=&\sum_{|\omega|=1}1_{\{x_\omega\leq\varepsilon_N\}}\sum_{|\sigma|=k_0, \sigma>\omega}1_{\{\varepsilon_N<x_{\sigma_2}<\cdots<x_{\sigma_{k_0}}\leq \varepsilon_N+\delta^\prime_N; x_{\sigma_j}\geq \varepsilon_N+\frac{j-2}{k_0-1}\delta^\prime_N,\forall j\geq2\}},
\end{split}
\end{equation*}
where $\sum_{|\sigma|=k_0, \sigma>\omega}1_{\{\varepsilon_N<x_{\sigma_2}<\cdots<x_{\sigma_{k_0}}\leq \varepsilon_N+\delta^\prime_N; x_{\sigma_j}\geq \varepsilon_N+\frac{j-2}{k_0-1}\delta^\prime_N,\forall j\geq2\}}$ is distributed as $Z_{N,k_0-1,1-\delta_N^\prime}$ under $\P_0$. This implies that
\begin{equation}\label{eq:nbofdelta}
\begin{split}
&\P_0\Big(Z_{N,k_0}(\delta_N)\leq N^{\frac{3+\varepsilon}{2}}\Big)\\
\leq& \Big[\P\Big(1_{\{x_\omega\leq\varepsilon_N\}}\sum_{|\sigma|=k_0, \sigma>\omega}1_{\{\varepsilon_N<x_{\sigma_2}<\cdots<x_{\sigma_{k_0}}\leq \varepsilon_N+\delta^\prime_N; x_{\sigma_j}\geq \varepsilon_N+\frac{j-2}{k_0-1}\delta^\prime_N,\forall j\geq2\}}\leq N^{\frac{3+\varepsilon}{2}}\Big)\Big]^N\\
=&\Big(1-\varepsilon_N+\varepsilon_N\P_{0}\Big(Z_{N,k_0-1,1-\delta_N^\prime}\leq N^{\frac{3+\varepsilon}{2}} \Big)\Big)^N.
\end{split}
\end{equation}

Recall that
\begin{equation}
\E_{0}\Big(Z_{N,k-1,1-\delta^\prime}\Big)= N^{k-1}\psi(k-1,k-1,1-\delta^\prime).
\end{equation}
Then there exist two constants $c_{\pm}(\beta,\gamma)$ such that
\begin{equation}
\frac{c_-(\beta,\gamma)N^{\gamma\log(1+\frac{\beta-\varepsilon}{\gamma})}}{(\log N)^{3/2}}\leq \E_{0}\Big(Z_{N,k_0-1,1-\delta^\prime_N}\Big)\leq \frac{c_+(\beta,\gamma)N^{\gamma\log(1+\frac{\beta-\varepsilon}{\gamma})}}{(\log N)^{3/2}}.
\end{equation}
As $\gamma$ goes to infinity, $\gamma\log(1+\frac{\beta-\varepsilon}{\gamma})\rightarrow \beta^\prime>3/2+2\varepsilon$. Take $\gamma$ sufficiently large so that $\gamma\log(1+\frac{\beta-\varepsilon}{\gamma})>3/2+\varepsilon$. For all $N$ sufficiently large, we have $\E_{0}\Big(Z_{N,k_0-1,1-\delta^\prime_N}\Big)\geq 2N^{\frac{3+\varepsilon}{2}}$. 

By (\ref{InPsecondmomupp}), there exists a constant $C(\beta,\gamma)>0$ such that
\begin{equation*}
\begin{split}
\E_{0}\Big(Z^2_{N,k_0-1,1-\delta^\prime_N}\Big)&\leq\E_{0}\Big(Z_{N,k_0-1,1-\delta^\prime_N}\Big)\Big\{1+\E_{0}\Big(Z_{N,k_0-1,1-\delta^\prime_N}\Big)\sum_{q=0}^{k_0-2}c_{5}\Big(\frac{k_0-1}{k_0-1-q}\Big)^{3/2}\Big(\frac{k_0-1}{eN\delta^\prime_N}\Big)^{q}\Big\}\\
&\leq C(\beta,\gamma)\E_{0}\Big(Z_{N,k-1,1-\delta^\prime}\Big)^2.
\end{split}
\end{equation*}
By the Paley-Zygmund inequality,
\begin{equation*}
\P_{0}\Big(Z_{N,k_0-1,1-\delta^\prime_N}\geq\frac{1}{2}\E_{0}\Big(Z_{N,k_0-1,1-\delta^\prime_N}\Big)\Big)\geq\P_{0}\Big(Z_{N,k-1,1-\delta^\prime}\geq N^{\frac{3+\varepsilon}{2}}\Big)\geq \frac{1}{4C(\beta,\gamma)}>0.
\end{equation*}
Plugging it into (\ref{eq:nbofdelta}) implies that
\begin{equation}
\begin{split}
\P_0\Big(Z_{N,k_0}(\delta_N)\leq N^{\frac{3+\varepsilon}{2}}\Big)&\leq \Bigg(1-\varepsilon_N+\varepsilon_N\Big[1-\P_{0}\Big(Z_{N,k_0-1,1-\delta_N^\prime}\geq N^{\frac{3+\varepsilon}{2}} \Big)\Big]\Bigg)^N\\
&\leq e^{-N\varepsilon_N\P_{0}\Big(Z_{N,k_0-1,1-\delta_N^\prime}\geq N^{\frac{3+\varepsilon}{2}} \Big)}\leq e^{-\varepsilon c_{22}\log N}\rightarrow 0.
\end{split}
\end{equation}
It follows from (\ref{eq:exatbeta}) that
\begin{equation}
\begin{split}
&\P_0\Big(Z_{N, eN-\beta \log N}=0\Big)\\
\leq& \E\Bigg(\bigg\{1-c_{21} N^{-3/2}\bigg\}^{Z_{N,k_0}(\delta_N)}; Z_{N,k_0}(\delta_N)\geq N^{\frac{3+\varepsilon}{2}}\Bigg)+\P_0\Big(Z_{N,k_0}(\delta_N)\leq N^{\frac{3+\varepsilon}{2}}\Big)\\
\leq& (1-c_{21}N^{-3/2})^{N^{\frac{3+\varepsilon}{2}}}+e^{-\varepsilon c_{22}\log N}
\rightarrow 0.
\end{split}
\end{equation}
This tells us that $\lim_{N\rightarrow\infty}\P_0\Big(Z_{N, eN-\beta \log N}\geq0\Big)=1$ with $\beta>3/2$.
\subsection{Proof of Proposition \ref{prop:critical}: $\beta=3/2$}

In this subsection, we consider the probability $\P_0\Big(Z_{N, eN-3/2\log N}\geq1\Big)$. Recounting the arguments in the previous subsection with $k_0=\gamma\log N$, $K_0=eN-(3/2+\gamma)\log N$ and $\delta_N=\frac{(\gamma +3/2)\log N}{eN}$. Again, 
\begin{equation*}
\P_0\Big(Z_{N, eN-3/2\log N}\geq1\Big)\geq\P_0\Big(\#\widetilde{\mathcal{A}}(k_0,\delta_N,K_0)\geq1\Big).
\end{equation*}
Recall that $\#\widetilde{\mathcal{A}}(k_0,\delta_N,K_0)$ is equal to 
\begin{equation}
\sum_{|\omega|=k_0}1_{\{x_{\omega_1}<\cdots<x_{\omega_k}\leq \delta_N\}}\sum_{|\sigma|=K_0+k_0; \sigma>\omega}1_{\{\delta_N<x_{\sigma_{k_0+1}}<\cdots<x_{\sigma_{K_0+k_0}}=x_{\sigma}\leq 1; x_{\sigma_{k_0+j}}\geq \delta_N+(1-\delta_N)\frac{j-1}{K_0},\forall j\geq1\}},
\end{equation}
where $\sum_{|\sigma|=K_0+k_0; \sigma>\omega}1_{\{\delta_N<x_{\sigma_{k_0+1}}<\cdots<x_{\sigma_{K_0+k_0}}=x_{\sigma}\leq 1; x_{\sigma_{k_0+j}}\geq \delta_N+(1-\delta_N)\frac{j-1}{K_0},\forall j\geq1\}}$ is distributed as $Z_{N,K_0,\delta_N}$.

One hence sees that
\begin{equation*}
\begin{split}
&\P_0\Big(\#\widetilde{\mathcal{A}}(k_0,\delta_N,K_0)\geq1\Big)\\
\geq&\P_0\Big(\sum_{|\omega|=k}1_{\{x_{\omega_1}<\cdots<x_{\omega_k}\leq \delta_N\}}\geq N^{3/2-\varepsilon}\Big)\Big\{1-\Big(1-\P_{0}(Z_{N,K_0,\delta_N}\geq1)\Big)^{N^{3/2-\varepsilon}}\Big\}.
\end{split}
\end{equation*}
By (\ref{eq:lowerbdofsur}), $\P_{0}(Z_{N,K_0,\delta_N}\geq1)\geq c_{21}N^{-3/2}$. We get that
\begin{equation*}
\begin{split}
\P_0\Big(\#\widetilde{\mathcal{A}}(k,\delta_N,K)\geq1\Big)\geq&\P\Big(Z_{N,k_0}(\delta_N)\geq N^{3/2-\varepsilon}\Big)\Big\{1-\Big(1-c_{21}N^{-3/2}\Big)^{N^{3/2-\varepsilon}}\Big\}\\
\geq&\P_{0}\Big(Z_{N,k_0,1-\delta_N}\geq N^{3/2-\varepsilon}\Big)\Big\{1-\Big(1-c_{21}N^{-3/2}\Big)^{N^{3/2-\varepsilon}}\Big\}.
\end{split}
\end{equation*}
Similarly as above, there exist two constants $c_{\pm}(\gamma)$ such that
\begin{equation}
\frac{c_-(\gamma)N^{\gamma\log(1+\frac{3/2}{\gamma})}}{(\log N)^{3/2}}\leq \E_{0}\Big(Z_{N,k_0,1-\delta_N}\Big)\leq \frac{c_+(\gamma)N^{\gamma\log(1+\frac{3/2}{\gamma})}}{(\log N)^{3/2}}.
\end{equation}
There exists a constant $C(\gamma)>0$ such that
\begin{equation}
\E_{0}\Big(Z^2_{N,k_0,1-\delta_N}\Big)\leq C(\gamma)\E_{0}\Big(Z_{N,k_0,1-\delta_N}\Big)^2.
\end{equation}
As $\gamma\uparrow\infty$, $\gamma\log(1+\frac{3/2}{\gamma})\uparrow 3/2$. Take $\gamma>0$ large enough such that $\gamma\log(1+\frac{3/2}{\gamma})>3/2-\varepsilon$. By the Paley-Zygmund inequality, we obtain that
\begin{equation}
\P_{0}\Big(Z_{N,k_0,1-\delta_N}\geq N^{3/2-\varepsilon}\Big)\geq \frac{1}{4C(\gamma)}.
\end{equation}
We deduce that for all $N$ sufficiently large,
\begin{equation}
\P_0\Big(\#\widetilde{\mathcal{A}}(k_0,\delta_N,K_0)\geq1\Big)\geq\frac{1}{4C(\gamma)}\Big\{1-\Big(1-c_{21}N^{-3/2}\Big)^{N^{3/2-\varepsilon}}\Big\}\geq c_{23}N^{-\varepsilon}.
\end{equation}
We thus conclude that for any $\varepsilon>0$ and all $N$ large enough, 
\begin{equation}
\P_0\Big(Z_{N, eN-3/2\log N}\geq1\Big)\geq N^{-\varepsilon}.
\end{equation}

\appendix
\section{Coupling with a branching process}
By considering the typical increasing paths, it has been proven that the critical value for $\alpha$ is $\alpha_c=e$. In what follows, by coupling with a branching process, we give an auxiliary idea to show the following result.
\begin{equation}\label{eq:criticalate}
\lim_{N\rightarrow\infty}\P_0\Big(Z_{N,\alpha N}\geq1\Big)=1, \forall \alpha\in(0,e).
\end{equation}

In the same probability space, we introduce accessibility percolation on a Galton-Watson tree as follows. For $\Lambda>0$, let $\mathcal{T}^\Lambda$ be a Galton-Watson tree rooted also at $\varnothing$, whose offspring distribution is Poisson with parameter $\Lambda$. To each vertex $\xi\in\mathcal{T}^{\Lambda}\setminus\{\varnothing\}$, we attach an random variable $x_\xi$, which is independent of $x_\varnothing$. Assume that all these variables $x_\xi$, $\xi\in\mathcal{T}^\Lambda$ are i.i.d., following the law $U[0,1]$. Similarly, let $[\![\varnothing,\xi]\!]$ denote the ancestral line of $\xi$ in $\mathcal{T}^{\Lambda}$. We keep $\xi$ if the attached random variables along its ancestral line $[\![\varnothing,\xi]\!]$ is \textit{decreasing} and delete all other vertices. Let $D_k^{(\Lambda)}$ be the number of individuals alive at $k$-th generation. Let $d_{k}^{(\Lambda)}(s,x)$ denote the generating function of $D_k^{(\Lambda)}$ under $\P_x$. Similarly to (\ref{InPrecursioneq}), we get the following recursive equation.
\begin{equation}\label{InPrecursiveeqofd}
d_{k+1}^{(\Lambda)}(s,x)=\E_x\Big[s^{D_{k+1}^{(\Lambda)}}\Big]=\exp\Big\{-\Lambda x+\Lambda\int_0^x d_k^{(\Lambda)}(s,y)dy\Big\},\quad \forall k\geq1.
\end{equation}
In particular, $d_1^{(\Lambda)}(s,x)=\exp\{\Lambda x(s-1)\}$. We also note that $d_{k}^{(\Lambda)}(s,x)\leq d_{k}^{(\Lambda)}(s,y)$ if $x\geq y$.

We compare the generating functions $f_k^{(N)}$ and $d_k^{(\Lambda)}$ via the following lemma.

\begin{lemma}
For any $0< \Lambda\leq N$ and $u\in[0,1]$, we have
\begin{equation}\label{InPcouplinggenerating}
f_k^{(N)}(s,\frac{\Lambda}{N}u)\leq d_k^{(\Lambda)}(s,u),\quad\forall k\geq1.
\end{equation}
\end{lemma}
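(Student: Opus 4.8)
The plan is to prove the inequality \eqref{InPcouplinggenerating} by induction on $k$, using the recursive relations \eqref{InPrecursioneq} for $f_k^{(N)}$ and \eqref{InPrecursiveeqofd} for $d_k^{(\Lambda)}$. The base case $k=1$ is a direct computation: $f_1^{(N)}(s,b)=[1-b+sb]^N$, so $f_1^{(N)}(s,\frac{\Lambda}{N}u)=\bigl[1-\frac{\Lambda}{N}u(1-s)\bigr]^N$, and since $1-t\le e^{-t}$ for all $t\ge 0$ this is at most $\exp\{-\Lambda u(1-s)\}=d_1^{(\Lambda)}(s,u)$. So the inequality holds at generation $1$ for every $u\in[0,1]$ and every $0<\Lambda\le N$.

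For the inductive step, assume \eqref{InPcouplinggenerating} holds at generation $k$ for all $u\in[0,1]$. Using \eqref{InPrecursioneq} with $b=\frac{\Lambda}{N}u$ and the change of variables $y=\frac{\Lambda}{N}v$,
\begin{equation*}
f_{k+1}^{(N)}\Bigl(s,\tfrac{\Lambda}{N}u\Bigr)=\Bigl[1-\tfrac{\Lambda}{N}u+\int_0^{\Lambda u/N}f_k^{(N)}(s,y)\,dy\Bigr]^N=\Bigl[1-\tfrac{\Lambda}{N}u+\tfrac{\Lambda}{N}\int_0^{u}f_k^{(N)}\Bigl(s,\tfrac{\Lambda}{N}v\Bigr)dv\Bigr]^N.
\end{equation*}
By the induction hypothesis the inner integrand $f_k^{(N)}(s,\frac{\Lambda}{N}v)$ is bounded above by $d_k^{(\Lambda)}(s,v)$ for each $v\in[0,u]$, so the bracket is at most $1-\frac{\Lambda}{N}\bigl(u-\int_0^u d_k^{(\Lambda)}(s,v)\,dv\bigr)$. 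One then checks that the quantity $u-\int_0^u d_k^{(\Lambda)}(s,v)\,dv$ is nonnegative (indeed $d_k^{(\Lambda)}(s,v)\le 1$), so that this bracket lies in $[0,1]$ and we may again apply $1-t\le e^{-t}$ with $t=\frac{\Lambda}{N}\bigl(u-\int_0^u d_k^{(\Lambda)}(s,v)\,dv\bigr)\ge 0$, raised to the $N$-th power:
\begin{equation*}
f_{k+1}^{(N)}\Bigl(s,\tfrac{\Lambda}{N}u\Bigr)\le\exp\Bigl\{-\Lambda u+\Lambda\int_0^{u}d_k^{(\Lambda)}(s,v)\,dv\Bigr\}=d_{k+1}^{(\Lambda)}(s,u),
\end{equation*}
where the last equality is \eqref{InPrecursiveeqofd}. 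This closes the induction.

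The only genuinely delicate point is bookkeeping: one must carry the induction hypothesis \emph{uniformly in $u\in[0,1]$} (not just at $u=1$), since the recursion for $f_{k+1}^{(N)}$ at the point $\frac{\Lambda}{N}u$ calls on the values of $f_k^{(N)}$ at all points $\frac{\Lambda}{N}v$ with $v\le u$; the statement of the lemma is already phrased this way, so this is just a matter of stating the induction correctly. The two analytic inputs — the elementary inequality $1-t\le e^{-t}$ for $t\ge 0$, and the obvious bound $0\le d_k^{(\Lambda)}(s,v)\le 1$ (a generating function evaluated at $s\in[0,1]$) guaranteeing that the base $1-\frac{\Lambda}{N}(\cdots)$ stays in $[0,1]$ so that monotonicity of $t\mapsto t^N$ applies — are both routine. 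I expect no real obstacle beyond keeping the change of variables and the direction of the inequality straight.
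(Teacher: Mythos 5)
Your proof is correct and follows essentially the same route as the paper: induction on $k$ via the two recursions \eqref{InPrecursioneq} and \eqref{InPrecursiveeqofd}, a change of variables $y=\frac{\Lambda}{N}v$, and the bound $1-t\le e^{-t}$. The only cosmetic difference is that the paper applies the exponential bound to the bracket first and then invokes the induction hypothesis inside the exponential, whereas you do these two steps in the opposite order; both orderings are valid.
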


\begin{proof}[Proof.]
For $N\geq \Lambda>0$ and $u\in[0,1]$,
\begin{equation}
f_1^{(N)}(s, \frac{\Lambda}{N}u)=\Big(1-\frac{\Lambda}{N}u+\frac{\Lambda}{N}us\Big)^N\leq \exp\{\Lambda u(s-1)\}= d_1^{(\Lambda)}(s,u).
\end{equation}
Assume that $f_k^{(N)}(s,\frac{\Lambda}{N}u)\leq d_k^{(\Lambda)}(s,u)$ holds for $k\geq1$. Then,
\begin{eqnarray*}
f_{k+1}^{(N)}(s,\frac{\Lambda}{N}u)&=&\Big[1-\frac{\Lambda}{N}u+\int_0^{\Lambda u/N}f_k^{(N)}(s,y)dy\Big]^N\\
&=&\Big[1-\frac{\Lambda}{N}u+\frac{\Lambda}{N}\int_0^{u}f_k^{(N)}(s,\frac{\Lambda}{N}v)dv\Big]^N\\
&\leq& \exp\Big\{-\Lambda u+\Lambda \int_0^{u}f_k^{(N)}(s,\frac{\Lambda}{N}v)dv\Big\},
\end{eqnarray*}
which is bounded by $\exp\Big\{-\Lambda u+\Lambda\int_0^{u}d_k^{(\Lambda)}(s,v)dv\Big\}$.  It follows from (\ref{InPrecursiveeqofd}) that
\begin{equation}
f_{k+1}^{(N)}(s,\frac{\Lambda}{N}u)\leq d_{k+1}^{(\Lambda)}(s,u).
\end{equation}
Therefore, by induction on $k$, we have $f_k^{(N)}(s,\frac{\Lambda}{N}u)\leq d_k^{(\Lambda)}(s,u)$ for any $k\geq1$.
\end{proof}

With the help of this lemma, we show that with positive probability, there exists at least one accessible vertex at the $\alpha N$-th generation for $\alpha<e$.

\begin{lemma}\label{lem:InPuniformlowerbd}
Let $\alpha\in(0,e)$. For any $\delta\in(\frac{\alpha}{e}, 1\wedge\alpha)$, there exists some positive constant $c(\delta,\alpha)>0$ such that
\begin{equation}\label{InPuniformlowerbd}
\inf_{N\geq 1}\P\Big[Z_{N,\alpha N}(\delta)\geq1\Big]>c(\delta,\alpha).
\end{equation}
\end{lemma}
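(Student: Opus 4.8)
The plan is to transfer the problem to the Galton--Watson tree $\mathcal{T}^\Lambda$ via the coupling lemma just proved, and there to get a lower bound on the survival probability that is uniform in $N$. Concretely, fix $\alpha\in(0,e)$ and $\delta\in(\alpha/e,\,1\wedge\alpha)$. The key point is that $1-f_k^{(N)}(0,\delta)=\P[Z_{N,k}(\delta)\geq1]$, and by the coupling inequality $(\ref{InPcouplinggenerating})$ with $u=1$ and $\Lambda=\delta N$ (so that $\tfrac{\Lambda}{N}u=\delta$, and $\Lambda\le N$), we have $f_k^{(N)}(0,\delta)\le d_k^{(\delta N)}(0,1)$, hence
\begin{equation*}
\P\big[Z_{N,\alpha N}(\delta)\geq1\big]\;\geq\;1-d_{\alpha N}^{(\delta N)}(0,1).
\end{equation*}
So it suffices to show that $1-d_{\alpha N}^{(\delta N)}(0,1)$ stays bounded away from $0$ as $N\to\infty$; equivalently, that the decreasing-path percolation on the Poisson$(\delta N)$ Galton--Watson tree, started from the root value $1$, survives to generation $\alpha N$ with probability bounded below.

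First I would rescale generations. Writing $\Lambda=\delta N$ and $k=\alpha N=\tfrac{\alpha}{\delta}\Lambda$, set $\rho:=\alpha/\delta$; the hypothesis $\delta>\alpha/e$ is exactly $\rho<e$. So the statement reduces to: for the Poisson$(\Lambda)$ tree with decreasing labels, started from $x_\varnothing=1$, the probability of survival to generation $\lfloor\rho\Lambda\rfloor$ is bounded below uniformly in $\Lambda$, for every fixed $\rho<e$. This is the natural continuous analogue of the $N$-ary result with $\alpha<e$, and I would prove it by a first-moment/second-moment (Paley--Zygmund) argument directly on $\mathcal{T}^\Lambda$, mimicking the proof of part (iii) of Theorem \ref{InPlimitcurve}. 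For the first moment: conditionally on the tree, $\e_1[D_k^{(\Lambda)}]$ is obtained by the many-to-one formula, each vertex at generation $k$ contributing $\Lambda^k$ (the expected number of generation-$k$ vertices) times the probability that $k$ i.i.d.\ uniforms decrease below the root value $1$, namely $1/k!$; so $\e_1[D_k^{(\Lambda)}]=\Lambda^k/k!$, and by Stirling $(\ref{InPeqstirling})$ this is $\gtrsim \tfrac{1}{\sqrt k}(e\Lambda/k)^k = \tfrac{1}{\sqrt k}(e/\rho)^k$, which for $\rho<e$ grows exponentially. For the second moment, I would use the same decomposition by the most recent common ancestor as in $(\ref{InPnewsecondmom})$--$(\ref{InPsecondmomupp})$: the branching is Poisson, so the number of pairs with common ancestor at level $q$ contributes a factor $\Lambda^{2k-q}$ (one $\Lambda^q$ for the shared path, a Poisson second-factorial-moment factor $\Lambda^2$ at the branch point, $\Lambda^{2(k-q)-2}$ for the two disjoint parts), and the two disjoint subpaths each contribute a typical-path probability of the same form $\psi(k-q,k-q,q/k)$ as in the $N$-ary computation. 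Summing the resulting geometric-type series $\sum_q (\rho/e)^{q}\,\mathrm{poly}$ gives $\e_1[(D_k^{(\Lambda)})^2]\le C(\rho)\,(\e_1[D_k^{(\Lambda)}])^2$ with $C(\rho)<\infty$ for $\rho<e$. Paley--Zygmund then yields $\P_1[D_k^{(\Lambda)}\geq1]\geq 1/C(\rho)=:c(\delta,\alpha)>0$, uniformly in $\Lambda$, hence in $N$.

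Finally, one has to be a little careful about integer parts: the coupling lemma is stated for $\Lambda\le N$ and all $k\ge1$, and $k=\lfloor\alpha N\rfloor$, $\Lambda=\delta N$ are fine for large $N$, while the finitely many small $N$ contribute a strictly positive probability each (indeed $\P[Z_{N,\alpha N}(\delta)\geq1]>0$ for every $N$ since continuous i.i.d.\ labels produce a decreasing path of any fixed length with positive probability), so taking the minimum of this finite set with the asymptotic bound preserves a positive infimum. The main obstacle I anticipate is the second-moment bound on the Galton--Watson side: one must handle the Poisson branch-point factor correctly (the factorial moment $\e[\text{Pois}(\Lambda)(\text{Pois}(\Lambda)-1)]=\Lambda^2$ rather than $N(N-1)$) and verify that the ancestor-decomposition sum converges uniformly in $\Lambda$ for every $\rho<e$ — but this is a direct adaptation of $(\ref{InPsecondmomupp0})$--$(\ref{InPsecondmomupp1})$ and should go through with the same geometric series $\sum_{q\ge1} e/(2q^{3/2})\cdot(\rho/e)^q$ type bound, the decisive inequality being $\rho=\alpha/\delta<e$.
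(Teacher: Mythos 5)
Your proposal is correct in outline but takes a genuinely different route after the coupling step. The application of \eqref{InPcouplinggenerating} with $\Lambda=\delta N$, $u=1$ to get $\P[Z_{N,\alpha N}(\delta)\ge 1]\ge 1-d_{\alpha N}^{(\delta N)}(0,1)$ is valid, and your treatment of small $N$ matches the paper's. But the paper does \emph{not} then run a moment computation on the Galton--Watson side: it cuts the path of length $K=\alpha N$ into $J$ blocks of length $a$ (plus a remainder $\kappa$), confines the labels of the $j$-th block to the window $(\tfrac{\kappa+aj}{K}\delta,\tfrac{\kappa+aj+a}{K}\delta]$, and observes that the block count is a Galton--Watson process whose offspring generating function $h=f_a^{(N)}(\cdot,\tfrac{a\delta}{K})$ is dominated, via the coupling applied at block scale, by $\widehat d=d_a^{(a)}(\cdot,\delta/\alpha)$ --- a generating function \emph{independent of $N$} with mean $(e_a\delta/\alpha)^a>1$ for $a$ large, precisely because $\delta>\alpha/e$. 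Supercriticality of this fixed offspring law gives $h^{\circ J}(0)\le\widehat q(a,\delta/\alpha)<1$ uniformly in $J$, hence the uniform lower bound with no second moments at all. Your route instead reimports the Paley--Zygmund machinery of Section \ref{ss:InPlimitcurve}; this works, but it makes the coupling step essentially redundant (the same second-moment argument run directly on the $N$-ary tree is exactly the proof of part (i) of Theorem \ref{InPlimitcurve}), whereas in the paper the coupling is the whole point, since it manufactures an $N$-independent branching mechanism. One technical caveat if you flesh out your version: for $\rho=\alpha/\delta\in[2,e)$ (which the hypotheses allow) the \emph{unrestricted} count has $\E_1[(D_k^{(\Lambda)})^2]/(\E_1[D_k^{(\Lambda)}])^2\to\infty$, the MRCA sum behaving like $\sum_q(\rho/2)^q$ as in Lemma \ref{InPsecondmoment}; so Paley--Zygmund must be applied to the line-restricted count, whose pair decomposition yields the $\psi(k-q,k-q,q/k)$ factors and the convergent series $\sum_q(\rho/e)^q(k/(k-q))^{3/2}$ of \eqref{InPsecondmomupp}. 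Your write-up quotes that restricted computation but states the first moment $\Lambda^k/k!$ of the unrestricted count; the two must be kept consistent (the restricted first moment $\Lambda^k\psi(k,k,0)$ still tends to infinity, so the conclusion is unaffected).
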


\begin{proof}[Proof.]
Set $K=\alpha N$. It follows from (\ref{InPcouplinggenerating}) that for $a\in\mathbb{N}_+$ and $a\leq N$,
\begin{equation}\label{InPcouplinggenerating1}
f_a^{(N)}(s,\frac{a\delta}{K})\leq d_a^{(a)}(s,\frac{\delta N}{K})\leq d_a^{(a)}(s,\frac{\delta}{\alpha}).
\end{equation}
For convenience, we write $h(s)=h_{a,N,K,\delta}(s):=f_a^{(N)}(s,\frac{a\delta}{K})$ and $\widehat{d}(s)=\widehat{d}_{a,\delta,\alpha}(s):=d_a^{(a)}(s,\frac{\delta}{\alpha})$, both of which are generating functions, satisfying $h(s)\leq \widehat{d}(s)$ for $N\geq a$.

 Let $J\geq0$ and $\kappa\in\{0,1,\cdots, a-1\}$ be such that $K=a J+\kappa$. Let $\mathcal{B}_{K}^{(N)}(\delta)$ be the collection of vertices $\sigma$ in $T^{(N)}$ such that
\begin{equation}
\frac{\kappa+aj}{K}\delta<x_{\sigma_{\kappa+aj+1}}<\cdots<x_{\sigma_{\kappa+aj+j}}\leq \frac{\kappa+aj+a}{K}\delta,\quad \forall j\in\{0,\cdots, J-1\},
\end{equation}
and that
\begin{equation}
0<x_{\sigma_1}<\cdots<x_{\sigma_{\kappa}}\leq\frac{\kappa}{K}\delta.
\end{equation}
where $K:=|\sigma|$.
According to the definition of $\mathcal{B}_{K}^{(N)}(\delta)$, one sees that
\begin{equation}
Z_{N,K}(\delta)\geq\#\mathcal{B}_{K}^{(N)}(\delta)=\sum_{|\omega|=\kappa}1_{(0<x_{\omega_1}<\cdots<x_{\omega}\leq\frac{\kappa}{K}\delta)}\sum_{|\sigma|=K}1_{(\sigma_\kappa=\omega)}1_{(\sigma\in\mathcal{B}_K^{(N)})},
\end{equation}
where given $\{0<x_{\omega_1}<\cdots<x_{\omega}\leq\frac{\kappa}{K}\delta\}$, the generating function of $\sum_{|\sigma|=K}1_{(\sigma_\kappa=\omega)}1_{(\sigma\in\mathcal{B}_K^{(N)})}$ is $\underbrace{h\circ\cdots\circ h}_{J}=:h^{\circ J}$. As a consequence,
\begin{eqnarray*}
&& \P_0\Big[Z_{N,K}(\delta)\geq1\Big]\geq\P_0\Big[\#\mathcal{B}_{K}^{(N)}(\delta)\geq1\Big]\\
&\geq&\P_0\Big[\sum_{|\omega|=\kappa}1_{(0<x_{\omega_1}<\cdots<x_{\omega}\leq\frac{\kappa}{K}\delta)}\geq1\Big]\P_0\Big[\sum_{|\sigma|=K}1_{(\sigma_\kappa=\omega)}1_{(\sigma\in\mathcal{B}_K^{(N)})}\geq1\Big\vert 0<x_{\omega_1}<\cdots<x_{\omega}\leq\frac{\kappa}{K}\delta\Big]\\
&=&\Big(1-f_\kappa^{(N)}(0,\frac{\kappa}{K}\delta)\Big)\Big(1-h^{\circ J}(0)\Big),
\end{eqnarray*}
since the generating function of $Z_{N,\kappa}(\frac{\kappa}{K}\delta)=\sum_{|\omega|=\kappa}1_{(0<x_{\omega_1}<\cdots<x_{\omega}\leq\frac{\kappa}{K}\delta)}$ is $f_\kappa^{(N)}(s,\frac{\kappa}{K}\delta)$. Applying the inequality (\ref{InPcouplinggenerating1}) to $f_\kappa^{(N)}(0,\frac{\kappa}{K}\delta)$ and $h$, respectively, shows that
\begin{eqnarray}\label{InPcoupling}
\P_0\Big[Z_{N,K}(\delta)\geq1\Big]&\geq& (1-d_\kappa^{(\kappa)}(0,\delta/\alpha))\Big(1-\big(\widehat{d}\ \big)^{\circ J}(0)\Big),
\end{eqnarray}
where $(\widehat{d}\ )^{\circ J}:=\underbrace{\widehat{d}\circ\cdots\circ \widehat{d}}_{J}$.
Going back to the generating function $\widehat{d}(s)=d_a^{(a)}(s,\frac{\delta}{\alpha})=\E_{\delta/\alpha}[s^{D_a^{(a)}}]$, we see that
\begin{equation}
\E_{\delta/\alpha}[D_a^{(a)}]=\frac{(a\delta/\alpha)^a}{a!}=(e_a\delta/\alpha)^a,
\end{equation}
where $e_a:=(\frac{a^a}{a!})^{1/a}$. By (\ref{InPeqstirling}), $e_a\uparrow e$ as $a\uparrow\infty$. For $\delta>\alpha/e$, there exists an integer $a(\delta,\alpha)$ such that $e_a\delta/\alpha>1$ for all $a\geq a(\delta,\alpha)$. This implies that
\begin{equation}
\widehat{d}^{\ \prime}(1)=\E_{\delta/\alpha}[D_a^{(a)}]>1,\quad\forall a\geq a(\delta,\alpha).
\end{equation}
Thus, for the Galton-Watson tree whose offspring has generating function $\widehat{d}(s)$, its extinction probability, denoted by $\widehat{q}(a,\delta/\alpha)$, satisfies that
\begin{equation}
\widehat{q}(a,\delta/\alpha)=\lim_{J\rightarrow\infty}\big(\widehat{d}\ \big)^{\circ J}(0)<1.
\end{equation}
This tells us that
\begin{equation}
\Big(1-\big(\widehat{d}\ \big)^{\circ J}(0)\Big)\geq 1-\widehat{q}(a,\delta/\alpha)=:\widehat{p}(a,\delta/\alpha)>0,\quad\forall J\geq0.
\end{equation}
Moreover, for any $a>0$ fixed, we have
\begin{equation}
\beta(a,\delta/\alpha):=\inf_{0\leq \kappa<a}\Big(1-d_\kappa^{(\kappa)}(0,\delta/\alpha)\Big)>0,
\end{equation}
as $d_\kappa^{(\kappa)}$ are non-trivial generating functions.

Therefore, we end up with
\begin{equation}\label{InPinfprob}
\inf_{N\geq a(\delta,\alpha)}\P\Big[Z_{N,\alpha N}(\delta)\geq1\Big]\geq c_0(\delta,\alpha)>0,
\end{equation}
where $c_0(\delta,\alpha):=\beta(a(\delta,\alpha),\delta/\alpha)\widehat{p}(a(\delta,\alpha),\delta/\alpha)>0$.

Notice that $\P[Z_{N,\alpha N}(\delta)>0]>0$ for any $1\leq N\leq a(\delta,\alpha)$. We conclude the proof of this lemma by taking $c(\delta,\alpha):=\min_{1\leq N\leq a(\delta,\alpha)}\{c_0(\delta,\alpha),\P[Z_{N,\alpha N}(\delta)>0]\}>0$.
\end{proof}

Now we are ready to prove the convergence (\ref{eq:criticalate}).

\begin{proof}[Proof of (\ref{eq:criticalate}).]
For $0<\alpha<e$, let $\delta\in(\frac{\alpha}{e},1\wedge \alpha)$. Observe that under $\P_0$,
\begin{equation}
Z_{N,\alpha N}\geq\sum_{|\omega|=1}1_{(0<x_\omega<1-\delta)}\sum_{|\sigma|=\alpha N}1_{(\sigma_1=\omega)}1_{(1-\delta<x_{\sigma_2}<\cdots<x_\sigma\leq1)}.
\end{equation}
For all vertex $\omega$ in the first generation, the variables $\sum_{|\sigma|=\alpha N}1_{(\sigma_1=\omega)}1_{(1-\delta<x_{\sigma_2}<\cdots<x_\sigma\leq1)}$ are independent and distributed as $Z_{N,\alpha N-1}(\delta)$. Consequently,
\begin{eqnarray*}
\P_0\Big[Z_{N,\alpha N}=0\Big]&\leq& \P_0\Big[\sum_{|\omega|=1}1_{(0<x_\omega<1-\delta)}\sum_{|\sigma|=\alpha N}1_{(\sigma_1=\omega)}1_{(1-\delta<x_{\sigma_2}<\cdots<x_\sigma\leq1)}=0\Big]\\
&= & \bigg(\P_0\Big[1_{(0<x_\omega<1-\delta)}\sum_{|\sigma|=\alpha N}1_{(\sigma_1=\omega)}1_{(1-\delta<x_{\sigma_2}<\cdots<x_\sigma\leq1)}=0\Big]\bigg)^N\\
&=&\bigg(\delta+(1-\delta)\P_0\Big[Z_{N,\alpha N-1}(\delta)=0\Big]\bigg)^N.
\end{eqnarray*}
By Lemma \ref{lem:InPuniformlowerbd}, $\P_0\Big[Z_{N,\alpha N-1}(\delta)=0\Big]\leq\P_0\Big[Z_{N,\alpha N}(\delta)=0\Big]\leq 1-c(\delta,\alpha)$. Thus,
\begin{equation}
\P_0\Big[Z_{N,\alpha N}=0\Big]\leq \bigg(\delta+(1-\delta)\Big(1-c(\delta,\alpha)\Big)\bigg)^N\leq e^{-c(\delta,\alpha)(1-\delta) N},
\end{equation}
which converges to zero as $N$ goes to infinity. This tells us that
\begin{equation}
\lim_{N\rightarrow\infty}\P_0\Big[Z_{N,\alpha N}\geq1\Big]=1,
\end{equation}
which is what we need.
\end{proof}

\section{The second order of $Z_{N,\alpha N}$ for $\alpha\in(0,e)$}
\label{ss:secondorder}

Note that for $\alpha\in(0,e)$, the population size $Z_{N,\alpha N}$ is asymptotically of order $e^{\theta(\alpha)N}$. The figure of the limit function $\theta(\alpha)$ is shown in {\sc Figure} \ref{fig4} at the end of this paper. 

We have the following lemma, concerning the second moment of $Z_{N,\alpha N}$.

\begin{lemma}\label{InPsecondmoment}
For $x\in[0,1)$ fixed and $0<\alpha<2(1-x)$, we have
\begin{equation}
\lim_{N\rightarrow\infty}\frac{\E_x\big[\big(Z_{N,\alpha N}\big)^2\big]}{m_{\alpha N}(x)^2}=\frac{2(1-x)}{2(1-x)-\alpha},
\end{equation}
where for any $k\geq1$ and $x\in[0,1]$,
\begin{equation}
m_k(x):=\frac{(1-x)^kN^k}{k!}=\E_x\big[Z_{N,k}\big].
\end{equation}
\end{lemma}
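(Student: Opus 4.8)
\medskip
\noindent\emph{Proof proposal.} The plan is to compute $\E_x\big[(Z_{N,\alpha N})^2\big]$ exactly, normalise by $m_{\alpha N}(x)^2$, and pass to the limit term by term. Writing $k=\alpha N$, I would expand $\E_x\big[(Z_{N,k})^2\big]=\sum_{|\sigma|=|\sigma'|=k}\P_x(\sigma,\sigma'\in\mathcal{A}_{N,k})$ and split the sum according to the generation $q$ of the most recent common ancestor $\sigma\wedge\sigma'$. The diagonal $q=k$ contributes $\E_x[Z_{N,k}]=m_k(x)$. For $0\le q\le k-1$ there are $N(N-1)N^{2k-q-2}$ ordered pairs with $|\sigma\wedge\sigma'|=q$ (choose the common vertex at level $q$, two distinct children, then the two disjoint remaining paths), and, conditioning on the value $y$ of $x_{\sigma\wedge\sigma'}$, the two branches become independent increasing paths of length $k-q$ started at $y$ --- exactly the factorisation already carried out for the truncated populations in (\ref{InPsecondmomupp}). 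This yields
\begin{equation*}
\P_x\big(\sigma,\sigma'\in\mathcal{A}_{N,k}\mid|\sigma\wedge\sigma'|=q\big)=\frac{1}{(q-1)!\,((k-q)!)^2}\int_x^1(y-x)^{q-1}(1-y)^{2(k-q)}\,dy,
\end{equation*}
and the substitution $y=x+(1-x)t$ turns the integral into a Beta integral, so the conditional probability equals $(1-x)^{2k-q}(2(k-q))!\big/\!\big(((k-q)!)^2(2k-q)!\big)$ (the formula is also valid at $q=0$, with $x_\varnothing=x$ in the role of $y$).

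Summing these contributions and dividing by $m_k(x)^2=N^{2k}(1-x)^{2k}/(k!)^2$, the factorials telescope and I would obtain the identity
\begin{equation*}
\frac{\E_x\big[(Z_{N,k})^2\big]}{m_k(x)^2}=\frac{1}{m_k(x)}+\frac{N(N-1)}{N^2}\sum_{q=0}^{k-1}b_q(k,N),\qquad b_q(k,N):=\frac{(k!)^2\,(2(k-q))!}{((k-q)!)^2\,(2k-q)!\,\big(N(1-x)\big)^q}.
\end{equation*}
Since $\alpha<2(1-x)<e(1-x)$, the bound (\ref{eq:1mom}) gives $m_k(x)\ge\tfrac{1}{3\sqrt k}\big(e(1-x)/\alpha\big)^k\to\infty$, so the first term is negligible and $N(N-1)/N^2\to1$. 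For fixed $q$, counting factors shows $(k!)^2/((k-q)!)^2\sim k^{2q}$ and $(2(k-q))!/(2k-q)!\sim(2k)^{-q}$, whence $b_q(k,N)\to\big(\alpha/(2(1-x))\big)^q$; granting that the limit may be taken inside the sum, the right-hand side tends to $\sum_{q\ge0}\big(\alpha/(2(1-x))\big)^q=2(1-x)/\big(2(1-x)-\alpha\big)$, which is the claim.

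The step I expect to be the main obstacle is justifying this interchange, i.e.\ producing a summable bound on $b_q(k,N)$ uniform in $N$. I would use the ratio
\begin{equation*}
\frac{b_{q+1}(k,N)}{b_q(k,N)}=\frac{(k-q)(2k-q)}{2\big(2(k-q)-1\big)N(1-x)}
\end{equation*}
together with the elementary identity, for $r=k-q\in\{1,\dots,k\}$,
\begin{equation*}
(k+1)(2r-1)-r(k+r)=(r-1)(k+1-r)\ge0,
\end{equation*}
which gives $b_{q+1}/b_q\le(k+1)/\big(2N(1-x)\big)$ for every $0\le q\le k-1$. As $(k+1)/\big(2N(1-x)\big)\to\alpha/(2(1-x))<1$, for all large $N$ this ratio is bounded by some fixed $\rho<1$, so $b_q(k,N)\le b_0(k,N)\,\rho^q=\rho^q$, and dominated convergence finishes the argument. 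The point of the identity is that the crude estimate $b_{q+1}/b_q\le k/\big(N(1-x)\big)\approx\alpha/(1-x)$ need not be below $1$; the saving factor $2$ comes precisely from $r(k+r)/\big(2(2r-1)\big)$ attaining its maximum over $1\le r\le k$ at the endpoint $r=1$.
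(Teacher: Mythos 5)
Your proposal is correct and follows essentially the same route as the paper: the same decomposition over the generation $q$ of the most recent common ancestor, the same closed form (your $b_q(k,N)$ is exactly the paper's $a_k(q,x)$ in (\ref{InPeqsecondmom})), the same pointwise limit $\big(\alpha/(2(1-x))\big)^q$, and the same ratio computation as in (\ref{InPmonoofa}). The only difference is how the interchange of limit and sum is justified: you push the ratio identity to a uniform geometric envelope $b_q\le\rho^q$ and apply dominated convergence directly, whereas the paper only uses monotonicity of $a_k(q,x)$ in $q$ together with a cutoff at $q_0\sim\log N$ to kill the tail --- a cosmetic, not substantive, difference.
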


This lemma shows that under $\P_0$, for $\alpha\in(0,2)$, with positive probability, $Z_{N,\alpha N}$ is of the same order as its expectation $\E_0\Big(Z_{N,\alpha N}\Big)$, that is $N^{-1/2}e^{\theta(\alpha) N}$. But we do not get the second order of $Z_{N,\alpha N}$ for $\alpha\in[2,e)$. From the arguments as above, one can say that for $\alpha\in[2,e)$, with positive probability under $\P_0$,
\begin{equation}
c_{24}N^{-3/2}e^{\theta(\alpha) N}\leq Z_{N,\alpha N}\leq c_{25}N^{-1/2}e^{\theta(\alpha) N}.
\end{equation}

In particular, one sees that the maximum of $\alpha\mapsto\theta(\alpha)$ is reached at $\alpha=1$. We turn to consider $Z_{N,\alpha N}$ when $\alpha=1$. Let $\mathcal{L}(X, \P_x)$ denote the law of random variable $X$ under $\P_x$. The theorem is given as follows.

\begin{proposition}\label{InPmaxpopulation}
Let $\lambda>0$ fixed. Then the following convergence in law holds as $N\rightarrow\infty$:
\begin{equation}
\mathcal{L}\bigg(\frac{Z_{N,N}}{m_N}; \P_{\frac{\lambda}{N}}\bigg)\rightarrow e^{-\lambda}\times W,
\end{equation}
where $W$ is an exponential variable with mean $1$ and $m_N:=\frac{N^N}{N!}$.
\end{proposition}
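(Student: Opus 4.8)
\emph{Proof proposal.}
The plan is to proceed by the method of moments. Writing $W$ for an exponential variable of mean $1$, the target law $e^{-\lambda}W$ has $p$-th moment $e^{-p\lambda}\,p!$, and since $\sum_{p}\big(e^{-2p\lambda}(2p)!\big)^{-1/(2p)}=\infty$ it is determined by its moments (Carleman). Hence it suffices to establish
\[
\lim_{N\to\infty}\E_{\lambda/N}\!\Big[\big(Z_{N,N}/m_N\big)^{p}\Big]=p!\,e^{-p\lambda},\qquad\forall\,p\ge1.
\]
For $p=1$ this is immediate from (\ref{eqch3:firstmom}): $\E_{\lambda/N}[Z_{N,N}]=m_N(1-\lambda/N)^{N}$, so $\E_{\lambda/N}[Z_{N,N}]/m_N\to e^{-\lambda}$.

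For $p\ge2$ I would expand
\[
\E_{\lambda/N}\big[(Z_{N,N})^{p}\big]=\sum_{\sigma^{(1)},\dots,\sigma^{(p)}:\,|\sigma^{(i)}|=N}\P_{\lambda/N}\big(\sigma^{(1)},\dots,\sigma^{(p)}\ \text{all accessible}\big),
\]
and group the $p$-tuples by the genealogy of the subtree of $T^{(N)}$ they span: a coalescent topology (which lineages merge, in which order, multifurcations allowed) together with the generations $0\le q_{1}\le q_{2}\le\cdots$ at which the merges occur. For a fixed such structure the number of embeddings into $T^{(N)}$ equals $N^{v}(1+O(1/N))$, where $v$ is the number of non-root vertices of the spanned tree, and the accessibility probability is a nested integral over the values assigned to the branch vertices, handled exactly as in the second-moment computation behind Lemma \ref{InPsecondmoment}. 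The ``leaf'' factors $(1-y)^{N-q}$ force every branch value to be of order $1/N$, and when in addition all the $q_{i}$ are bounded each nested integral is evaluated in the limit by (shifted) Gamma integrals. Dividing by $\big(\E_{\lambda/N}[Z_{N,N}]\big)^{p}$, which is $\sim(m_N e^{-\lambda})^{p}$, all $N$- and $\lambda$-dependence cancels in each term except for a product of geometric series in $q_{1},q_{2},\dots$; the sum over all branch generations and all coalescent structures of these series should equal $p!$, whence $\E_{\lambda/N}[(Z_{N,N})^{p}]\sim p!\,(m_N e^{-\lambda})^{p}$, the remaining diagonal contributions ($\E_{\lambda/N}[(Z_{N,N})^{j}]$, $j<p$) being of lower order. (For $p=2$ this recovers the value $2$ of Lemma \ref{InPsecondmoment} at $\alpha=1$, $x\to0$; for $p=3$ the three labelled binary topologies each contribute $\tfrac32$ and the single ternary-split topology contributes $\tfrac32$, summing to $6=3!$.)

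The main obstacle is two-fold. First, one must prove the combinatorial identity that the total over all coalescent structures of the limiting geometric series is $p!$; I would do this by induction on $p$, conditioning on the first branching event (the single lineage splitting into $j\ge2$ sub-families), after which each sub-family carries an independent coalescent of the same kind — this identity is essentially the statement that the Yaglom-type limit of the locally critical branching near generation $N$ (note $\E_{0}[Z_{N,k+1}]/\E_{0}[Z_{N,k}]=N/(k+1)=1$ at $k=N-1$) is exponential. Second, one must check that the terms discarded above really are negligible; here multifurcations must be \emph{kept}, since a $j$-furcation at a bounded generation costs no power of $N$, whereas coalescences occurring at generations $q_{i}$ that grow with $N$ contribute only exponentially little — the geometric ratios being of size $(\alpha/2)^{q}$ with $\alpha=1$ — which follows from the Stirling bounds (\ref{InPeqstirling}) already exploited in the paper.

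A purely analytic route would iterate the recursion (\ref{InPrecursioneq}), rewritten as $L_{k+1}(t;x)=\big[x+\int_{x}^{1}L_{k}(t;z)\,dz\big]^{N}$ for $L_{k}(t;x):=\E_{x}\big[e^{-tZ_{N,k}/m_N}\big]$ with $L_{0}\equiv e^{-t/m_N}$, and perform a singular-perturbation analysis of this $N$-fold iteration: writing $L_{k}=1-\epsilon_{k}$ gives $\epsilon_{k+1}(x)=1-\big(1-\int_{x}^{1}\epsilon_{k}\big)^{N}$, whose linearization reproduces the first moment $e^{-\lambda}$ and whose leading nonlinear correction, accumulated over the $\sim N$ steps, should produce the Riccati-type fixed point solved by $t\mapsto(1+te^{-\lambda})^{-1}$. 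Since controlling the error over $N$ iterations looks at least as delicate as the combinatorial identity above, I would keep the method of moments as the primary argument.
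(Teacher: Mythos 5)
Your overall strategy (method of moments) is legitimate and genuinely different from the paper's argument, and your preliminary steps are sound: the law $e^{-\lambda}W$ is moment-determinate, the first moment follows from (\ref{eqch3:firstmom}), and your consistency checks for $p=2$ (Lemma \ref{InPsecondmoment} at $\alpha=1$) and $p=3$ do come out right. But as written the proposal has a genuine gap at its mathematical core: the statement that for every $p$ the $p$-th moment $\E_{\lambda/N}[(Z_{N,N}/m_N)^p]$ converges to $p!\,e^{-p\lambda}$ is exactly what carries the whole proposition, and you only assert it ("should equal $p!$"), deferring the combinatorial identity over coalescent topologies to an unexecuted induction. What is missing is not a routine verification: one must (a) compute, for each labelled topology with merge generations $q_1\le\cdots$, the limiting contribution (a product of geometric-type series whose ratios depend on how many lineages are present on each segment, as in your $p=3$ computation where the ratios are $2/3$ and $1/2$), (b) prove the identity that these totals sum to $p!$ over all topologies including multifurcations, (c) justify the interchange of the $N\to\infty$ limit with the sum over a number of structures and over merge times ranging up to $N$ (the analogue of the truncation at $q_0$ in the proof of Lemma \ref{InPsecondmoment}, but now uniformly over all segments of the spanned tree), and (d) dispose of the diagonal terms where leaves coincide. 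Items (a)--(c) are the actual content of the result in this approach; until they are carried out the proposal is a program rather than a proof.

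It is worth noting that the "analytic route" you set aside is essentially what the paper does, and it is arranged precisely so as to avoid the error-accumulation over $N$ iterations that worried you. The paper does not iterate the recursion (\ref{InPrecursioneq}) $N$ times for $Z_{N,N}$ itself; instead it replaces $Z_{N,N}$ by the conditional expectations $\widetilde{\theta}_{N,k}(x)=\E_x[Z_{N,N+k}\,\vert\,\mathcal{F}_k]$, shows via second-moment/conditional-variance bounds (Lemmas \ref{InPlemmacoupleone} and \ref{InPlemmacoupletwo}) that $(\widetilde{\theta}_{N,k}-Z_{N,N})/m_N$ is negligible when $N\to\infty$ followed by $k\to\infty$, and then runs the recursion only $k$ times on the Laplace transform $G_k(\mu,\lambda/N,N)$, obtaining in the $N\to\infty$ limit the map $F_{k+1}(z)=\exp\{-\int_0^z\frac{1-F_k(u)}{u}\,du\}$, which contracts (the $2^{-k}$ bound on $\Delta_k$) to the fixed point $1/(1+z)$, i.e.\ the Laplace transform $1/(1+\mu e^{-\lambda})$ of $e^{-\lambda}W$. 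That two-step truncation (finite $k$ first, then $k\to\infty$) is the device your sketch lacks; either adopt it, or supply the full moment computation (a)--(d) above.
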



\begin{remark}
A similar result to Proposition \ref{InPmaxpopulation} has been given in \cite{Berestycki-Brunet-Shi2013} by considering the accessible paths in the $N$-dimensional hypercubes. Our proof is mainly inspired by it.
\end{remark}

\subsection{The second moment of $Z_{N,\alpha N}$}

\begin{proof}[Proof of Lemma \ref{InPsecondmoment}.]
By (\ref{InPnewsecondmom}),
\begin{eqnarray}\label{InPeqsecondmom}
\E_x\big[\Big(Z_{N,k}\Big)^2\big]&=&m_k(x)+\frac{N-1}{N}\sum_{q=0}^{k-1} N^{2k-q}\P_x(\sigma,\sigma^\prime\in\mathcal{A}_{N,k}\Big\vert |\sigma\wedge\sigma^\prime|=q)\nonumber\\
&=&m_k(x)+\frac{N-1}{N}\sum_{q=0}^{k-1}\frac{((1-x)N)^{2k-q}}{(2k-q)!}{2(k-q)\choose (k-q)}\nonumber\\
&=&m_K(x)+m_K(x)^2\frac{N-1}{N}\sum_{q=0}^{k-1}a_k(q,x),
\end{eqnarray}
where $a_k(q,x):=\frac{(2k-2q)!k!k!}{[(1-x)N]^q (2k-q)!(k-q)!(k-q)!}$. Note that if $k+1\leq 2(1-x)N$,
\begin{equation}\label{InPmonoofa}
a_k(q+1,x)=a_k(q,x)\frac{(k-q)(2k-q)}{2(1-x)N(2k-2q-1)}\leq a_k(q,x),\quad\forall 0\leq q<k.
\end{equation}
Moreover, for $q\ll \sqrt{k}$ and $k=\alpha N$,
\begin{equation}\label{InPdominatedpart}
a_k(q,x)=\Big(\frac{k}{2(1-x)N}\Big)^{q}\frac{[(1-\frac{1}{k})\cdots(1-\frac{q-1}{k})]^2}{(1-\frac{q}{2k})\cdots(1-\frac{2q-1}{2k})}
=\Big(\frac{\alpha}{2(1-x)}\Big)^q[1+O(\frac{q^2}{k})].
\end{equation}\label{InPzeropart}
Take $q_0=\lceil\frac{2\log N}{\log (2(1-x))-\log\alpha}\rceil$ so that $\Big(\frac{\alpha}{2(1-x)}\Big)^{q_0}\leq N^{-2}$. It follows from (\ref{InPmonoofa}) that
\begin{equation}
\sum_{q=q_0}^{k-1}a_k(q,x)\leq k a_k(q_0,x)\leq c_{13}\alpha N^{-1},
\end{equation}
which vanished as $N$ goes to infinity. The dominated convergence theorem implies that for $0<\alpha<2(1-x)$ and $k=\alpha N$,
\begin{equation}
\lim_{N\rightarrow\infty}\sum_{q=0}^{q_0}a_k(q,x)=\sum_{q=0}^\infty \Big(\frac{\alpha}{2(1-x)}\Big)^q= \frac{2(1-x)}{2(1-x)-\alpha}.
\end{equation}
Moreover, $1/m_{\alpha N}(x)\rightarrow 0$ as $N$ goes to infinity. We thus conclude that for $0<\alpha<2(1-x)$,
\begin{equation*}
\lim_{N\rightarrow\infty}\frac{\E_x\big[\Big(Z_{N,\alpha N}\Big)^2\big]}{m_{\alpha N}(x)^2}=\frac{2(1-x)}{2(1-x)-\alpha}. \qedhere
\end{equation*}
\end{proof}

As a consequence of Lemma \ref{InPsecondmoment}, one sees that $\E_{\lambda/N}\Big[\Big(\frac{Z_{N,N}}{m_N}\Big)^2\Big]\rightarrow e^{-2\lambda}$ as $N\rightarrow\infty$.

\subsection{Proof of Proposition \ref{InPmaxpopulation}}

In this subsection, we investigate $Z_{N,N}$. Let $\{\mathcal{F}_k; k\geq1\}$ denote the natural filtration of the accessibility percolation on $N$-ary tree, i.e., $\mathcal{F}_k:=\sigma\{(\omega, x_\omega); \omega\in T^{(N)}, |\omega|\leq k\}$.

We introduce the following variables:
$$\theta_{N,k}(x):=\E_x\Big[Z_{N,N}\vert \mathcal{F}_k\Big],\quad\textrm{ and } \widetilde{\theta}_{N,k}(x):=\E_x\Big[Z_{N,N+k}\vert \mathcal{F}_k\Big].$$
Let $\theta:=Z_{N,N}$ for simplicity.

Recall that $m_N=\E_0[\theta]=\frac{N^N}{N!}$. We begin with the following lemma.
\begin{lemma}\label{InPlemmacoupleone}
As $N$ goes to infinity then $k$ goes to infinity,
\begin{equation}
\mathcal{L}\Big(\frac{\theta_{N,k}(\lambda/N)-\theta}{m_N},\P_{\lambda/N}\Big)\rightarrow 0.
\end{equation}
\end{lemma}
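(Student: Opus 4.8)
Since $\theta_{N,k}(\lambda/N)=\E_{\lambda/N}[Z_{N,N}\mid\mathcal{F}_k]$, the asserted convergence in law to the constant $0$ will follow from Chebyshev's inequality once I prove the $L^{2}$ estimate
\[
\lim_{k\to\infty}\ \limsup_{N\to\infty}\ \frac{1}{m_N^{2}}\,\E_{\lambda/N}\big[(\theta_{N,k}(\lambda/N)-\theta)^{2}\big]=0,
\]
together with the identity $\E_{\lambda/N}[(\theta_{N,k}(\lambda/N)-\theta)^{2}]=\E_{\lambda/N}[\mathrm{Var}(Z_{N,N}\mid\mathcal{F}_k)]$. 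The first step is a branching decomposition at generation $k$: conditionally on $\mathcal{F}_k$ one has $Z_{N,N}=\sum_{\tau\in\mathcal{A}_{N,k}}\zeta_\tau$, where the $\zeta_\tau$ (numbers of accessible descendants of $\tau$ at generation $N$) are independent and, given $x_\tau=y$, each $\zeta_\tau$ has the law of $Z_{N,N-k}(y,1)$, i.e.\ of $Z_{N,N-k}$ under $\P_y$. Hence $\mathrm{Var}(Z_{N,N}\mid\mathcal{F}_k)=\sum_{\tau\in\mathcal{A}_{N,k}}\mathrm{Var}_{x_\tau}(Z_{N,N-k})$, and since under $\P_{\lambda/N}$ each of the $N^{k}$ vertices $\tau$ at generation $k$ is accessible with $x_\tau\in\mathrm{d}y$ with density $(y-\lambda/N)_{+}^{k-1}/(k-1)!$,
\[
\E_{\lambda/N}\big[\mathrm{Var}(Z_{N,N}\mid\mathcal{F}_k)\big]=N^{k}\int_{\lambda/N}^{1}\frac{(y-\lambda/N)^{k-1}}{(k-1)!}\,\mathrm{Var}_y(Z_{N,N-k})\,\mathrm{d}y .
\]

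Next I would bound $\mathrm{Var}_y(Z_{N,N-k})\le\E_y[(Z_{N,N-k})^{2}]$, insert the exact second-moment formula \eqref{InPeqsecondmom} with $k$ replaced by $N-k$ and $x$ by $y$, and integrate term by term against $(y-\lambda/N)^{k-1}$ using $\int_{\lambda/N}^{1}(y-\lambda/N)^{k-1}(1-y)^{r}\,\mathrm{d}y=(1-\lambda/N)^{k+r}(k-1)!\,r!/(k+r)!$. The first-moment term $m_{N-k}(y)$ then contributes exactly $(1-\lambda/N)^{N}/m_N\to0$. After the substitution $p=k+q$, the ``pair'' part of \eqref{InPeqsecondmom} contributes to $m_N^{-2}\,\E_{\lambda/N}[\mathrm{Var}(Z_{N,N}\mid\mathcal{F}_k)]$ the quantity $\tfrac{N-1}{N}\sum_{p=k}^{N-1}S_N(p)$, where $S_N(p):=\frac{(N!)^{2}}{N^{p}}\binom{2(N-p)}{N-p}\frac{(1-\lambda/N)^{2N-p}}{(2N-p)!}$. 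One has $S_N(0)=(1-\lambda/N)^{2N}\le1$, and
\[
\frac{S_N(p+1)}{S_N(p)}=\frac{(N-p)(2N-p)}{2N(2N-2p-1)}\cdot\frac{1}{1-\lambda/N},
\]
which for $1\le p\le N/2$ is at most $\tfrac12+O(1/N)$ uniformly; telescoping then gives $S_N(p)\le C(\lambda)2^{-p}$ for $p\le N/2$ and $N$ large, so $\sum_{p=k}^{\lfloor N/2\rfloor}S_N(p)\le 2C(\lambda)2^{-k}$. For $N/2<p\le N-1$ the crude estimates $(N!)^{2}/(2N-p)!\le N!/N^{N-p}$ and $\binom{2(N-p)}{N-p}\le4^{N-p}$ give $S_N(p)\le(N!/N^{N})4^{N-p}$, whence $\sum_{N/2<p\le N-1}S_N(p)=O(N^{3/2}(2/e)^{N})\to0$. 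Collecting the pieces, $\limsup_{N\to\infty}m_N^{-2}\,\E_{\lambda/N}[\mathrm{Var}(Z_{N,N}\mid\mathcal{F}_k)]\le 2C(\lambda)2^{-k}\to0$ as $k\to\infty$, which is the claim.

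The conditional branching identity and the Beta integrals are routine bookkeeping; the main obstacle I foresee is the summability of $\sum_p S_N(p)$ uniformly in $N$, i.e.\ controlling $\E_y[(Z_{N,N-k})^{2}]$ after integration. This is delicate precisely because, as recorded for $\alpha\in[2,e)$, the second moment of the accessible population need not be comparable to the square of its mean; what saves the estimate is that the weight $(y-\lambda/N)^{k-1}(1-y)^{\Theta(N)}$ concentrates $y$ at scale $1/N$, where $N-k<2(1-y)N$ and the pair series converges, and the telescoping-ratio bound above is the clean way to make this quantitative over the entire range $k\le p\le N-1$.
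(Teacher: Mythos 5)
Your proposal is correct and follows essentially the same route as the paper: reduce to $\lim_k\limsup_N m_N^{-2}\E_{\lambda/N}[\mathrm{Var}(\theta\mid\mathcal{F}_k)]=0$, decompose the conditional variance over accessible vertices at generation $k$, integrate the exact second-moment formula, and observe that the resulting pair sum is the geometric tail $\sum_{p\ge k}$ of the $a_N(\cdot,\lambda/N)$ series with ratio close to $1/2$. The only (harmless) deviations are that you bound the variance by the second moment, so your sum starts at $p=k$ rather than $q=k+1$, and you make the uniform geometric bound over the full range $k\le p\le N-1$ more explicit than the paper does.
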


\begin{proof}[Proof.]
We observe that for any $z\in\r$ and $\delta>0$,
\begin{eqnarray*}
&&\P_x[\theta_{N,k}(x)\leq (z-\delta)m_N\vert\mathcal{F}_k]-\P_x[|\theta-\theta_{N,k}(x)|\geq m_N\delta\vert\mathcal{F}_k]\leq \P_x[\theta\leq m_Nz\vert\mathcal{F}_k];\\
&&\P_x[\theta_{N,k}(x)\leq (z+\delta)m_N\vert\mathcal{F}_k]+\P_x[|\theta-\theta_{N,k}(x)|\geq m_N\delta\vert\mathcal{F}_k]\geq \P_x[\theta\leq m_Nz\vert\mathcal{F}_k].
\end{eqnarray*}
Note also that
\begin{equation}
\P_x[|\theta-\theta_{N,k}(x)|\geq m_N\delta\vert\mathcal{F}_k]\leq \frac{\mathrm{Var}_x(\theta\vert\mathcal{F}_k)}{m_N^2\delta^2}.
\end{equation}
Consequently,
\begin{eqnarray*}
&&\P_x[\theta_{N,k}(x)\leq (z-\delta)m_N]- \P_x[\theta\leq m_Nz]\leq \E_x\Big[\frac{\mathrm{Var}_x(\theta\vert\mathcal{F}_k)}{m_N^2\delta^2}\Big];\\
&&\P_x[\theta\leq m_Nz]-\P_x[\theta_{N,k}(x)\leq (z+\delta)m_N]\leq \E_x\Big[\frac{\mathrm{Var}_x(\theta\vert\mathcal{F}_k)}{m_N^2\delta^2}\Big].
\end{eqnarray*}
Thus, it suffices to prove the following convergence.
\begin{equation}\label{InPconvergeconditionalvar}
\lim_{k\rightarrow\infty}\lim_{N\rightarrow\infty}\frac{\E_{\lambda/N}[\mathrm{Var}(\theta\vert\mathcal{F}_k)]}{m_N^2}= 0.
\end{equation}

The branching property yields that
\begin{equation}
\mathrm{Var}_x(\theta\vert\mathcal{F}_k)=\sum_{\sigma\in \mathcal{A}_{N,k}} v(x_\sigma, N-k),
\end{equation}
where $v(y,L):=\E_y[(Z_{N,L})^2]-\E_y[Z_{N,L}]^2$ for any $L\geq1$. Taking the expectation implies that
\begin{eqnarray}\label{InPconditionalvar}
\E_x[\mathrm{Var}_x(\theta\vert\mathcal{F}_k)]&=& N^k \int_x^1 dy \frac{y^{k-1}}{(k-1)!}v(y, N-k).
\end{eqnarray}
By (\ref{InPeqsecondmom}), we have
\begin{eqnarray*}
v(y,L)&=&m_L(y)+m_L(y)^2\frac{N-1}{N}\sum_{q=0}^{L-1}a_L(q,y)-m_L(y)^2\\
         &=& m_L(y)+m_L(y)^2\frac{N-1}{N}\sum_{q=1}^{L-1}a_L(q,y)-\frac{m_L(y)^2}{N}.
\end{eqnarray*}
Plugging it into (\ref{InPconditionalvar}) yields that
\begin{eqnarray*}
\E_x[\mathrm{Var}_x(\theta\vert\mathcal{F}_k)]&=&m_N(x)+m_N(x)^2\frac{N-1}{N}\sum_{q=k+1}^{N-1}a_N(q,x)-\frac{1}{N}m_N(x)^2 a_N(k,x).
\end{eqnarray*}
It follows from (\ref{InPdominatedpart}) and (\ref{InPzeropart}) that $\sum_{q=k+1}^{N-1}a_N(q,\lambda/N)\rightarrow\frac{1}{2^k}$. Clearly, $m_N(\lambda/N)/m_N=(1-\lambda/N)^N\rightarrow e^{-\lambda}$. Therefore,
\begin{equation}
\lim_{N\rightarrow\infty}\frac{\E_{\lambda/N}[\mathrm{Var}_{\lambda/N}(\theta\vert\mathcal{F}_k)]}{m_N^2}=\frac{e^{-2\lambda}}{2^k},
\end{equation}
which vanishes as $k$ goes to infinity. This yields (\ref{InPconvergeconditionalvar}) and completes the proof of Lemma \ref{InPlemmacoupleone}.
\end{proof}

\begin{lemma}\label{InPlemmacoupletwo}
For any $k\geq0$ fixed, we have
\begin{equation}
\lim_{N\rightarrow\infty}\frac{\E_{\lambda/N}[(\theta_{N,k}(\lambda/N)-\widetilde{\theta}_{N,k}(\lambda/N))^2]}{m_N^2}=0.
\end{equation}
\end{lemma}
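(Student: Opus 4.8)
The plan is to start from the branching property at generation $k$. Conditionally on $\mathcal{F}_k$, every accessible vertex $\sigma$ of generation $k$ launches an independent copy of the accessibility percolation started from the value $x_\sigma$, so that
\begin{equation*}
\theta_{N,k}(x)=\sum_{\sigma\in\mathcal{A}_{N,k}}m_{N-k}(x_\sigma),\qquad \widetilde\theta_{N,k}(x)=\sum_{\sigma\in\mathcal{A}_{N,k}}m_{N}(x_\sigma),
\end{equation*}
where $m_j(y)=(1-y)^jN^j/j!$. Hence $\Delta_{N,k}(x):=\theta_{N,k}(x)-\widetilde\theta_{N,k}(x)=\sum_{\sigma\in\mathcal{A}_{N,k}}\bigl(m_{N-k}(x_\sigma)-m_N(x_\sigma)\bigr)$. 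Since $m_N(y)/m_{N-k}(y)=(1-y)^k\prod_{j=0}^{k-1}(1-j/N)^{-1}$, the elementary bounds $1-(1-y)^k\le ky$ and $\prod_{j=0}^{k-1}(1-j/N)^{-1}=1+O(k^2/N)$ yield the pointwise estimate $|m_{N-k}(y)-m_N(y)|\le (ky+c/N)\,m_{N-k}(y)$ for $y\in[0,1]$ and $N$ large (with $c$ depending only on $k$), and therefore
\begin{equation*}
|\Delta_{N,k}(x)|\le k\,Y_{N,k}(x)+\frac{c}{N}\,\theta_{N,k}(x),\qquad Y_{N,k}(x):=\sum_{\sigma\in\mathcal{A}_{N,k}}x_\sigma\,m_{N-k}(x_\sigma).
\end{equation*}
So I would reduce the statement to showing, for each fixed $k$, that $\E_{\lambda/N}[\theta_{N,k}(\lambda/N)^2]=O(m_N^2)$ and $\E_{\lambda/N}[Y_{N,k}(\lambda/N)^2]=o(m_N^2)$.

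The first of these is easy: conditional Jensen gives $\theta_{N,k}(x)^2\le\E_x[Z_{N,N}^2\mid\mathcal{F}_k]$, hence $\E_x[\theta_{N,k}(x)^2]\le\E_x[Z_{N,N}^2]$, and the exact identity (\ref{InPeqsecondmom}) together with the monotonicity (\ref{InPmonoofa}) used in the proof of Lemma \ref{InPsecondmoment} shows (uniformly for $x$ near $0$, in particular for $x=\lambda/N$) that $\E_{\lambda/N}[Z_{N,N}^2]=O(m_N^2)$. Thus the term $\frac{c}{N}\theta_{N,k}$ contributes only $O(k^2/N^2)\,m_N^2=o(m_N^2)$ for $k$ fixed.

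The main work will be the bound on $\E_{\lambda/N}[Y_{N,k}(\lambda/N)^2]$, which I would obtain by the usual second-moment decomposition. Writing the square as a double sum over generation-$k$ vertices and grouping the ordered pairs $(\sigma,\sigma')$ according to $q=|\sigma\wedge\sigma'|\in\{0,1,\dots,k\}$, there are at most $N^{2k-q}$ pairs for each $q<k$ and $N^k$ for $q=k$; conditioning on the value $z$ of the common ancestor, each $q$ contributes a constant multiple of
\begin{equation*}
\int_{x}^{1}\frac{(z-x)^{q-1}}{(q-1)!}\Bigl(\int_{z}^{1}\frac{(w-z)^{k-q-1}}{(k-q-1)!}\,w\,m_{N-k}(w)\,dw\Bigr)^{2}dz
\end{equation*}
(with the obvious degenerations at $q=0$ and $q=k$). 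Using $w\le(w-z)+z$, $z\le(z-x)+x$ and the exact evaluation $\int_z^1\frac{(w-z)^{j-1}}{(j-1)!}m_{N-k}(w)\,dw=\frac{N^{N-k}(1-z)^{N-k+j}}{(N-k+j)!}$, each of these finitely many terms should reduce, after Stirling (\ref{InPeqstirling}) and $(1-\lambda/N)^{N}\to e^{-\lambda}$, to something of order $O_k(N^{-2})\,m_N^2$; the point is that the weight $x_\sigma$, of typical size of order $k/N$ for an accessible generation-$k$ vertex and appearing analytically as the factor $w$ (or $z$) inside the integrals, produces exactly the two extra powers of $N^{-1}$. Summing over $q$ then gives $\E_{\lambda/N}[Y_{N,k}^2]=O_k(N^{-2})\,m_N^2\to0$, and combining with the previous paragraph proves the lemma.

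The hard part will be precisely this last estimate — not conceptually, but in the bookkeeping: one must carry the Beta-function identities and the ratios of factorials near $N$ and near $2N$ carefully through Stirling's formula, and verify that the smallness genuinely comes from the $x_\sigma$-weight rather than from a cancellation between $m_{N-k}$ and $m_N$. An equivalent and possibly cleaner route is to expand $\E_x[\Delta_{N,k}^2]$ directly, using the exact evaluation $\int_z^1\frac{(w-z)^{k-q-1}}{(k-q-1)!}\bigl(m_{N-k}-m_N\bigr)(w)\,dw=\frac{N^{N-k}(1-z)^{N-q}}{(N-q)!}-\frac{N^{N}(1-z)^{N+k-q}}{(N+k-q)!}$ and estimating the resulting finite sum term by term.
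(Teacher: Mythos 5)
Your argument is correct, but it takes a genuinely different route from the paper's. The paper bounds $(\theta_{N,k}-\widetilde\theta_{N,k})^2$ by conditional Jensen with $\E_x[(Z_{N,N+k}-Z_{N,N})^2\vert\mathcal{F}_k]$, then uses Cauchy--Schwarz to telescope into the consecutive increments $Z_{N,L+1}-Z_{N,L}$ with $N\le L\le N+k-1$, and estimates each increment's second moment by decomposing over the last common ancestor at generation $q$, where $q$ runs up to $L-1\approx N$; the smallness there rests on the near-cancellation $\delta_1(q)-2\delta_2(q)+\delta_3(q)=\delta_3(q)O(q^2/L^2)$ for $q=O(\log L)$, the monotonicity of $a_L(q,x)$, and a $\log L$ cutoff. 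You instead use the branching property at the fixed generation $k$ to write $\theta_{N,k}(x)=\sum_{\sigma\in\mathcal{A}_{N,k}}m_{N-k}(x_\sigma)$ and $\widetilde\theta_{N,k}(x)=\sum_{\sigma\in\mathcal{A}_{N,k}}m_{N}(x_\sigma)$, absorb the difference through the pointwise bound $|m_{N-k}(y)-m_N(y)|\le(ky+c_k/N)\,m_{N-k}(y)$ (correct, since $(1-y)^k\prod_{j<k}(1-j/N)^{-1}$ lies between $1-ky$ and $1+c_k/N$ for $N$ large), and reduce the lemma to $\E_{\lambda/N}[\theta_{N,k}^2]=O(m_N^2)$ (Jensen plus the exact formula (\ref{InPeqsecondmom}), which is indeed valid at $x=\lambda/N$) and $\E_{\lambda/N}[Y_{N,k}^2]=o(m_N^2)$. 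The last estimate, which you only sketch, does go through exactly as planned: the Beta identity $\int_z^1\frac{(w-z)^{j-1}}{(j-1)!}m_{N-k}(w)\,dw=\frac{N^{N-k}(1-z)^{N-k+j}}{(N-k+j)!}$ together with $w\le(w-z)+z$ gives $\int_z^1\frac{(w-z)^{k-q-1}}{(k-q-1)!}\,w\,m_{N-k}(w)\,dw\le\frac{N^{N-k}(1-z)^{N-q}}{(N-q)!}\bigl[\frac{k}{N-q+1}+z\bigr]$, and feeding the bracket (with $z\le(z-x)+\lambda/N$) through the outer Beta integral yields precisely the two extra factors of order $k/N$, so each of the $k+1$ values of $q$ (including the diagonal $q=k$ and the root case $q=0$) contributes $O_k(N^{-2})m_N^2$. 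What your route buys is that the delicate second-moment computation lives at the fixed generation $k$ (finitely many $q$, no uniformity issues and no cutoff argument), whereas the paper must control ancestor generations $q$ up to order $N$ and extract the cancellation $N(1-x_\sigma)\approx1$ near the front of the population; conversely, note that your pointwise comparison of $m_{N-k}$ and $m_N$ is itself exactly a cancellation between these two mean functions, so your closing remark that the smallness does not come from such a cancellation is slightly overstated, though harmless to the proof.
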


\begin{proof}[Proof.]
By Jensen's inequality,
\begin{equation}
\Big(\theta_{N,k}(x)-\widetilde{\theta}_{N,k}(x)\Big)^2=\Big(\E_x[Z_{N,N+k}-Z_{N,N}\vert\mathcal{F}_k]\Big)^2\leq \E_x\Big[\Big(Z_{N,N+k}-Z_{N,N}\Big)^2\big\vert\mathcal{F}_k\Big].
\end{equation}
Taking the expectation yields that
\begin{equation}
\E_x\Big[\Big(\theta_{N,k}(x)-\widetilde{\theta}_{N,k}(x)\Big)^2\Big]\leq \E_x\Big[\Big(Z_{N,N+k}-Z_{N,N}\Big)^2\Big],
\end{equation}
which, by the Cauchy-Schwarz inequality, is bounded by
\begin{equation}
k\sum_{i=1}^k \E_x\Big[\Big(Z_{N,N+i}-Z_{N,N+i-1}\Big)^2\Big].
\end{equation}
Let $L=K+i-1\geq K$. Then,
\begin{equation}
Z_{N,L+1}-Z_{N,L}=\sum_{\sigma\in \mathcal{A}_{N,L}}(y_{\sigma}-1),
\end{equation}
where $y_\sigma:=\sum_{|\omega|=L+1}1_{(\omega_L=\sigma)}1_{(x_\omega>x_{\sigma})}$. It immediately follows that
\begin{eqnarray}\label{InPconsecutivedif}
&&\Big(Z_{N,L+1}-Z_{N,L}\Big)^2=\sum_{\sigma\in \mathcal{A}_{N,L}}(y_\sigma-1)^2+\sum_{\sigma\neq\sigma^\prime; \sigma,\sigma^\prime\in \mathcal{A}_{N,L}}(y_\sigma-1)(y_{\sigma^\prime}-1)\nonumber\\
&=&\sum_{\sigma\in \mathcal{A}_{N,L}}(y_\sigma-1)^2+\sum_{q=0}^{L-1}\sum_{|\sigma\wedge\sigma^\prime|=q}1_{(\sigma,\sigma^\prime\in \mathcal{A}_{N,L})}(y_\sigma-1)(y_{\sigma^\prime}-1),
\end{eqnarray}
where $\sigma\wedge\sigma^\prime$ is, as before, the latest common ancestor of $\sigma$ and $\sigma^\prime$. Note that under $\P_x[\cdot\vert\mathcal{F}_L]$, $y_\sigma$'s are independent binomial variables with parameters $N$ and $1-x_\sigma$. Thus, taking $\E_x[\cdot\vert\mathcal{F}_L]$ on both sides of (\ref{InPconsecutivedif}) yields that
\begin{equation}\label{InPsigmalplussigma2}
\E_x\Big[\Big(Z_{N,L+1}-Z_{N,L}\Big)^2\Big\vert\mathcal{F}_L\Big]=\Sigma_1+\Sigma_2,
\end{equation}
where
\begin{eqnarray}
\Sigma_1&:=&\sum_{\sigma\in \mathcal{A}_{N,L}}\E_x\Big[(y_\sigma-1)^2\Big\vert\mathcal{F}_L\Big];\\
\Sigma_2&:=&\sum_{q=0}^{L-1}\sum_{|\sigma\wedge\sigma^\prime|=q}1_{(\sigma,\sigma^\prime\in \mathcal{A}_{N,L})}(N(1-x_\sigma)-1)(N(1-x_{\sigma^\prime})-1).
\end{eqnarray}
Obviously, $(y_\sigma-1)^2\leq N^2$. Hence,
\begin{equation}\label{InPsigma1upp}
\E_x[\Sigma_1]\leq N^2\E_x[Z_{N,L}]=N^2 m_L(x)=o(m_N^2).
\end{equation}
Conditioning on the value of $x_{\sigma\wedge\sigma^\prime}$ yields that
\begin{eqnarray}\label{InPsigma2upp}
\E_x[\Sigma_2] &=&\E_x \Big[\sum_{q=0}^{L-1}\sum_{|\sigma\wedge\sigma^\prime|=q}1_{(\sigma,\sigma^\prime\in \mathcal{A}_{N,L})}(N(1-x_\sigma)-1)(N(1-x_{\sigma^\prime})-1)\Big]\nonumber\\
         &=&\frac{N-1}{N}\sum_{q=0}^{L-1}N^{2L-q}\int_{x}^1dy\frac{(y-x)^{q-1}}{(q-1)!}\bigg[\int_y^1dx_{\sigma}\frac{(x_\sigma-y)^{L-q-1}(N(1-x_\sigma)-1)}{(L-q-1)!}\bigg]^2\nonumber\\
         &=&\frac{N-1}{N} \sum_{q=0}^{L-1}(\delta_1(q)-2\delta_2(q)+\delta_3(q)),
\end{eqnarray}
where
\begin{eqnarray*}
\delta_1(q):&=&N^{2L-q}\int_{x}^1dy\frac{(y-x)^{q-1}}{(q-1)!}\bigg(\frac{N(1-y)^{L-q+1}}{(L-q+1)!}\bigg)^2;\\
\delta_2(q):&=&N^{2L-q}\int_{x}^1dy\frac{(y-x)^{q-1}}{(q-1)!}\bigg(\frac{N(1-y)^{L-q+1}}{(L-q+1)!}\times\frac{(1-y)^{L-q}}{(L-q)!}\bigg);\\
\delta_3(q):&=&N^{2L-q}\int_{x}^1dy\frac{(y-x)^{q-1}}{(q-1)!}\bigg(\frac{(1-y)^{L-q}}{(L-q)!}\bigg)^2.
\end{eqnarray*}
 On the one hand,
\begin{equation}
0\leq\delta_1(q)-2\delta_2(q)+\delta_3(q)\leq5\delta_3(q),\quad \forall q\geq0.
\end{equation}
On the other hand,
\begin{equation}
\delta_1(q)-2\delta_2(q)+\delta_3(q)\leq \delta_3(q)O({\frac{q^2}{L^2}}),\quad \forall q\leq O({\log L}).
\end{equation}
Thus, (\ref{InPsigma2upp}) becomes
\begin{equation}
\E_x[\Sigma_2]\leq\sum_{q=c_{14}\log L}^{L-1}5\delta_3(q)+\sum_{q=0}^{c_{14}\log L}\delta_3(q)c_{15}({\frac{q^2}{L^2}}).
\end{equation}
Notice that $\delta_3(q)=m_L^2(x)a_L(q,x)$. Take $x=\lambda/N$ and recall that $L=N+i-1$. By (\ref{InPmonoofa}), for $N$ large enough so that $N+i\leq 2(1-\lambda/N)N$, $a_L(q,x)$ is non-increasing as $q$ increases. It follows that
\begin{eqnarray*}
\E_x[\Sigma_2]&\leq&m^2_L(x)\Big(\sum_{q=c_{14}\log L}^{L-1}5a_L(q,x)+\sum_{q=0}^{c_{14}\log L}a_L(q,x)c_{15}({\frac{q^2}{L^2}})\Big)\\
&\leq&m^2_L(x)\Big(5La_L(c_{14}\log L, x)+c_{15}\frac{(c_{14}\log L)^3}{L^2}a_L(0,x)\Big).
\end{eqnarray*}
Note that $a_L(0,x)=1$. By (\ref{InPdominatedpart}), $a_L(c_{14}\log L, x)=\Big(\frac{L}{2(1-x)N}\Big)^{c_{14}\log L}[1+O(\frac{(\log L)^2}{L})]$. We can choose a suitable $c_{14}$ so that $a_L(c_{14}\log L,x)=o(N^{-1})$. As a result,
\begin{equation}\label{InPsigma2uppbd}
\E_x[\Sigma_2]=m^2_L(x) o_N(1)=m_N^2 o_N(1).
\end{equation}
We return to (\ref{InPsigmalplussigma2}). Combining (\ref{InPsigma1upp}) with (\ref{InPsigma2uppbd}) implies that
\begin{equation}
\E_{\lambda/N}\Big[\frac{\Big(Z_{L+1}^{(N)}-Z_{L}^{(N)}\Big)^2}{m_N^2}\Big]=o_N(1).
\end{equation}
Therefore, for any $k\geq1$ fixed, we have
\begin{equation*}
\lim_{N\rightarrow\infty}\E_{\lambda/N}[\frac{(\theta_{N,k}(\lambda/N)-\widetilde{\theta}_{N,k}(\lambda/N))^2}{m_N^2}]= 0.\qedhere
\end{equation*}
\end{proof}

By considering the variables $\widetilde{\theta}_{N,k}(x)$, we will prove the convergence in law in Proposition \ref{InPmaxpopulation} as follows.

\begin{proof}[Proof of Proposition \ref{InPmaxpopulation}.]
In view of Lemmas \ref{InPlemmacoupleone} and \ref{InPlemmacoupletwo}, we only need to prove that the distribution $\mathcal{L}\Big(\frac{\widetilde{\theta}_{N,k}(\lambda/N)}{m_N},\ \P_{\lambda/N}\Big)$ converges weakly to an exponential variable of mean $e^{-\lambda}$, as $N$ goes to infinity then $k$ goes to infinity.

 Clearly, $\widetilde{\theta}_{N,0}(x)=m_N(1-x)^N$ with $m_N=\frac{N^N}{N!}$. Define for any $k\geq0$ and $\mu\geq0$,
\begin{equation}\label{InPdefofg}
G_k(\mu,x,N):=\E_x\Big[\exp\{-\mu \widetilde{\theta}_{N,k}(x)/m_N\}\Big],
\end{equation}
which is the Laplace transform of $\frac{\widetilde{\theta}_{N,k}(x)}{m_N}$.

It is immediate that $G_0(\mu, x, N)=\exp\{-\mu (1-x)^N\}$. Recursively,
\begin{equation}
\widetilde{\theta}_{N,k+1}(x)=\E_x\big[Z_{N,N+k+1}\big\vert\mathcal{F}_{k+1}\big]=\sum_{\sigma\in \mathcal{A}_{N,1}}\E_{x_\sigma}\Big[Z_{N,N+k}\Big\vert\mathcal{F}_k\Big]=\sum_{|\sigma|=1}1_{(x_\sigma>x)}\widetilde{\theta}_{N,k}(x_\sigma),
\end{equation}
where for $|\sigma|=1$, $1_{(x_\sigma>x)}\widetilde{\theta}_{N,k}(x_\sigma)$ are i.i.d. It follows that
\begin{equation}\label{InPrecursiveeq}
 G_{k+1}(\mu, x, N)=\Big[x+\int_x^1dy G_k(\mu, y, N)\Big]^N.
\end{equation}

We define for $\lambda$, $\mu>0$,
\begin{eqnarray}
Q_0(\mu, \lambda):&=&\exp\{-\mu e^{-\lambda}\};\\
Q_{k+1}(\mu, \lambda):&=& \exp\Big\{-\int_\lambda^\infty\big(1-Q_k(\mu, y)\big)dy\Big\},\quad \forall k\geq0.
\end{eqnarray}
Clearly, $\lim_{N\rightarrow\infty}G_0(\mu, \frac{\lambda}{N}, N)=Q_0(\mu,\lambda)$. We are going to prove that for any $k\geq0$,
\begin{equation}\label{InPgeneratingfunclimit}
\lim_{N\rightarrow\infty}G_k(\mu,\frac{\lambda}{N}, N)=Q_k(\mu,\lambda).
\end{equation}

Suppose that (\ref{InPgeneratingfunclimit}) holds for $k\geq0$. By a change of variables, (\ref{InPrecursiveeq}) becomes that
\begin{equation}
G_{k+1}(\mu, \frac{\lambda}{N}, N)=\Big[1-\int_{\lambda}^N dy \Big(1-G_k(\mu, \frac{y}{N}, N)\Big)\Big]^N.
\end{equation}
Because $1-e^{-z}\leq z$ for all $z\in\r$, (\ref{InPdefofg}) gives that
\begin{equation}
0\leq 1-G_{k}(\mu, \frac{y}{N}, N)\leq \mu\E_{\frac{y}{N}}[Z_{N,N+k}]/m_N=\frac{\mu}{m_N}\frac{[N(1-y/N)]^{N+k}}{(N+k)!}\leq \mu e^{-y}.
\end{equation}
The dominated convergence theorem implies that
\begin{equation}
\int_{\lambda}^N dy \Big(1-G_k(\mu, \frac{y}{N}, N)\Big)\xrightarrow{N\rightarrow\infty}\int_\lambda^\infty dy\Big(1-Q_k(\mu,y)\Big).
\end{equation}
It follows that $\lim_{N\rightarrow\infty}G_{k+1}(\mu,\frac{\lambda}{N}, N)=Q_{k+1}(\mu,X)$. By induction, we conclude (\ref{InPgeneratingfunclimit}) for any $k\geq0$.

We write $Q_k(\mu, \lambda)=F_k(\mu e^{-\lambda})$ for all $k\geq0$. We check that
\begin{equation}\label{InPrecursiveeqofF}
F_{k+1}(z)=\exp\Big\{-\int_0^{z}\frac{1-F_k(u)}{u}du\Big\},\qquad F_0(z)=e^{-z}.
\end{equation}

Define $\Delta_k(z)$ for $z>-1$ and $z\neq 0$ by
\begin{equation}\label{InPdefofdelta}
\Delta_k(z):=2^k\frac{(1+z)^3}{z^2}\Big[\frac{1}{1+z}-F_k(z)\Big].
\end{equation}
Then we claim that there exists a constant $M$ such that for all $k\geq0$,
\begin{equation}\label{InPuniformbd}
0\leq \Delta_k(z)\leq M,\quad \forall z>-1.
\end{equation}
Indeed, for $k = 0$,
\begin{equation}
\Delta_0(z)=\frac{(1+z)^3}{z^2}\Big[\frac{1}{1+z}-e^{-z}\Big],
\end{equation}
which is nonnegative for $z>-1$, because $e^z\geq 1+z$. Moreover, since $\lim_{z\rightarrow 0}\Delta_0(z)=1/(2e)$, define $\Delta_0(0):=\frac{1}{2e}$ so that $\Delta_0(z)$ is continuous in $(-1,\infty)$, and that both $\lim_{z\downarrow-1}\Delta_0(z)$ and $\lim_{z\uparrow\infty}\Delta_0(z)$ exist and are bounded. Hence, there exists $M\in(0,\infty)$ such that
\begin{equation}
0\leq \Delta_0(z)\leq M,\quad \forall z>-1.
\end{equation}
Assume now that (\ref{InPuniformbd}) holds at order $k$. In view of (\ref{InPrecursiveeqofF}) and (\ref{InPdefofdelta}),
\begin{equation}
F_{k+1}(z)=\frac{1}{1+z}\exp\Big\{-\int_0^z\frac{u}{(1+u)^3}\frac{\Delta_k(u)}{2^k}du\Big\}.
\end{equation}
This leads to
\begin{equation*}
\frac{1}{1+z}\geq F_{k+1}(z)\geq \frac{1}{1+z}\Big[1-\int_0^z\frac{u}{(1+u)^3}\frac{M}{2^k}du\Big]= \frac{1}{1+z}\Big[1-\frac{M}{2^k}\frac{z^2}{2(1+z)^2}\Big].
\end{equation*}
This implies that (\ref{InPuniformbd}) holds for $k+1$. In view of (\ref{InPdefofdelta}) and (\ref{InPuniformbd}), we check that
\begin{equation}
\lim_{k\rightarrow\infty}F_k(z)=\frac{1}{1+z},\textrm{ for }z>-1.
\end{equation}
Recall that $Q_k(\mu, \lambda)=F_k(\mu e^{-\lambda})$. Going back to (\ref{InPgeneratingfunclimit}), we let $k$ go to infinity for both sides and obtain that for any $\lambda>0$ fixed,
\begin{equation}
\lim_{k\rightarrow\infty}\lim_{N\rightarrow\infty}\E_{\lambda/N}\Big[e^{-\mu\frac{\widetilde{\theta}_{N,k}(\lambda/N)}{m_N}}\Big]=\lim_{k\rightarrow\infty}\lim_{N\rightarrow\infty} G_k(\mu,\frac{\lambda}{N},N)=\frac{1}{1+\mu e^{-\lambda}},
\end{equation}
which is the Laplace transform of an exponential variable of mean $e^{-\lambda}$. Therefore, we deduce that as $N\rightarrow\infty$,
\begin{equation}
\mathcal{L}\Big(\frac{Z_{N,N}}{m_N},\P_{\lambda/N}\Big)\rightarrow e^{-\lambda}\times W,
\end{equation}
where $W$ is an exponential variable with mean $1$.
\end{proof}

 An analogous argument implies that for $0<\alpha<1$, started from $x=1-\alpha+\frac{\lambda}{N}$, $\mathcal{L}\Big(\frac{Z_{N,\alpha N}}{m_{\alpha N}(1-\alpha)},\P_{x}\Big)$ converges to an exponential distribution of mean $e^{-\lambda}$.

\begin{figure}[htp]
\begin{center}
\includegraphics[width=\textwidth]{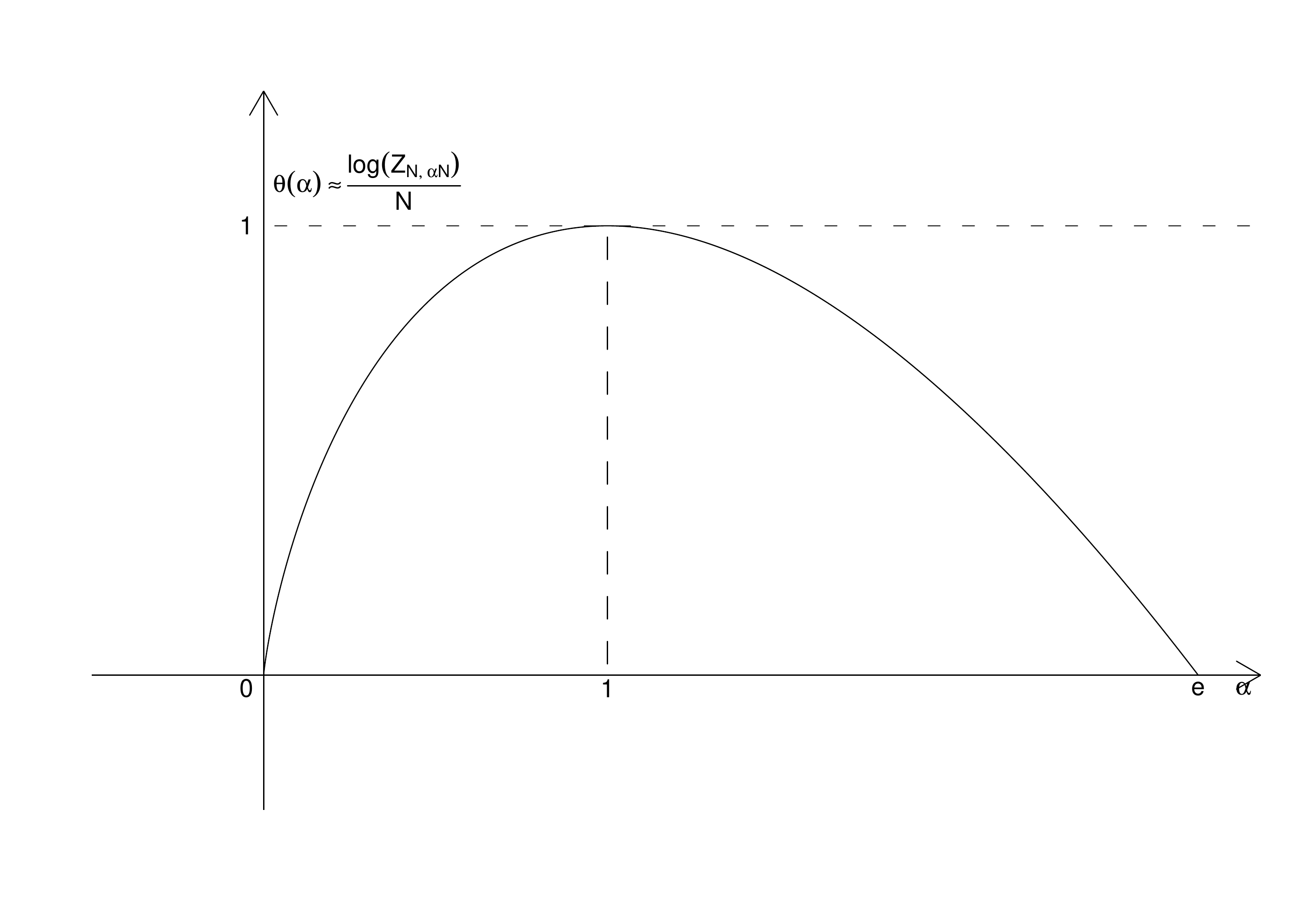}
\caption{The curve of $\alpha\mapsto\theta(\alpha)=\alpha(1-\log\alpha)$.}
\label{fig4}
\end{center}
\end{figure}

\bibliographystyle{plain}
\bibliography{bibli}

\end{document}